\newtheorem{theorem}{Theorem}[section]
\newtheorem{lemma}[theorem]{Lemma}
\newtheorem{definition}[theorem]{Definition}
\newtheorem{corollary}[theorem]{Corollary}
\newtheorem{remark}[theorem]{Remark}
\newtheorem{proposition}[theorem]{Proposition}
\newtheorem*{theorem A}{Theorem A}
\newtheorem*{corollary B}{Corollary B}
\newtheorem*{corollary C}{Corollary C}
\newtheorem*{theorem B}{Theorem B}
\theoremstyle{definition}
\newtheorem{example}[theorem]{Example}
\begin{document}

\title[On topological entropy and topological pressure of non-autonomous IFSs]{On topological entropy and topological pressure of non-autonomous iterated function systems}
\author[F. H. Ghane]{F. H. Ghane$^{*}$}
\author[J. Nazarian Sarkooh]{J. Nazarian Sarkooh}
\address{Department of Mathematics, Ferdowsi University of Mashhad, Mashhad, IRAN.}
\email{\textcolor[rgb]{0.00,0.00,0.84}{javad.nazariansarkooh@gmail.com}}
\email{\textcolor[rgb]{0.00,0.00,0.84}{ghane@math.um.ac.ir}}
\subjclass[2010]
{37B55; 37B40; 37D35.}
 \keywords{Non-autonomous iterated function system; Topological entropy; Topological pressure;
Entropy point; Specifcation property; Nonwandering point.}
 \thanks{$^*$Corresponding author}
\begin{abstract}
In this paper we introduce the notions of topological entropy and topological pressure for non-autonomous iterated function systems (or NAIFSs for short) on countably infinite alphabets. NAIFSs differ from the usual (autonomous)
iterated function systems, they are given \cite{LGMU} by a sequence of collections of continuous
maps on a compact topological space, where maps are allowed to vary between iterations.
Several basic properties of topological pressure and topological entropy of NAIFSs are provided. Especially, we generalize the classical Bowen's result to NAIFSs ensures that the topological entropy is concentrated on the set of nonwandering points. Then, we define the notion of specification property, under which, the NAIFSs have positive topological entropy and all points are entropy points. In particular, each NAIFS with the specification property is topologically chaotic. Additionally, the $\ast$-expansive property for NAIFSs is introduced. We will prove that the topological pressure of any continuous potential can be computed as a limit at a definite size scale whenever the NAIFS satisfies the $\ast$-expansive property.
Finally, we study the NAIFSs induced by expanding maps. We prove that these NAIFSs
having the specification and $\ast$-expansive properties.
\end{abstract}

\maketitle
\thispagestyle{empty}
\section{Introduction}
The time dependent systems so-called non-autonomous, yield very flexible models than autonomous cases
for the study and description of real world processes. They may be used to describe the evolution of a wider
class of phenomena, including systems which are forced or driven. Non-autonomous dynamical systems are
strongly motivated from applications, e.g., in population biology \cite{RHS} as well as applications to numerical approximations, switching systems \cite{Kl} and synchronization \cite{Kl1}. Here, we deal with
non-autonomous iterated function systems (or NAIFSs for short) which differ from the usual
(autonomous) iterated function systems. It is natural, and frequently necessary in applications,
to consider the non-autonomous version of iterated function systems, where the system is allowed to
vary at each time. (In the case where all maps are affine similarities, the resulting system is also called a Moran
set construction \cite{LGMU}). Generalized Cantor sets that studied by Robinson and Sharples \cite{RS} are
examples of attractors of NAIFSs. Olson et al. \cite{ORS} illustrate examples of pullback attractors. A pullback attractor serves as non-autonomous counterpart to the global attractor. Henderson et al. \cite{AMORS},
extended the regularity results of \cite{ORS} to a natural class of attractors of both autonomous and
non-autonomous iterated functions systems of contracting similarities, and studied the Assouad, box-counting, Hausdorff and packing dimensions for the attractors of these class of dynamical systems. These regularity results are useful as pullback attractors can exhibit dimensionally different behaviour at different length scales. Rempe-Gillen and Urba\'{n}ski \cite{LGMU}, studied the Hausdorff dimension of the limit set of NAIFSs. Under a suitable restriction on the growth of the number of contractions used at each step, they showed that the Hausdorff dimension of the limit set is determined by an equation known as Bowen’s formula.
Also, they proved Bowen’s formula for a class of infinite alphabet systems and deal with Hausdorff measures for finite systems, as well as continuity of topological pressure and Hausdorff dimension for both finite and infinite systems. In particular they strengthened the existing continuity results for infinite autonomous systems.

In general, an NAIFS generalizes the both concepts of finitely generated semigroups and non-autonomous discrete dynamical systems. Recently, there have been major efforts in establishing a general theory of NAIFSs \cite{AMORS,LGMU}, but a global theory is still out of reach.
Our main goal in this paper is to describe the topological aspects
of thermodynamic formalism for NAIFSs.
To our knowledge, the thermodynamic formalism of such systems (NAIFSs) have not been studied before.
From a conceptual point of view, an interesting aspect of these studies is the fact that the
fundamental notions of thermodynamic formalism, like topological entropy and topological pressure, come up naturally in our context. However, an extension of the thermodynamical formalism for NAIFSs has revealed fundamental difficulties.

Thermodynamic formalism, i.e. the formalism of equilibrium statistical physics, was adapted to the
theory of dynamical systems in the classical works of Sinai, Ruelle and Bowen \cite{RB3,RBDR,DR1,YGS}.
Topological pressure and topological entropy are two fundamental notions in thermodynamic formalism.
Topological pressure is the main tool in studying dimension of invariant sets and measures for dynamical
systems in dimension theory. On the other hand, the notion of entropy is one of the most important objects in dynamical systems, either as a topological invariant or as a measure of the chaoticity of dynamical systems.
Hence, there were several attempts to find their generalization for other systems in an attempt to describe their dynamical characteristics, see, for instance, \cite{HWZ,WHYY,SKLS,DMMW,DJT1,FZKYGH}.

The concept of topological entropy of a map plays a central role in topological dynamics. There are two
standard definitions of topological entropy for a continuous self-map of a compact metric space \cite{BHZNJP}.
The first definition was given by Adler, Konhelm and McAndrew \cite{AKM}, based on open covers, can be
applied to continuous maps of any compact topological space. In 1971, Bowen \cite{RB} and Dinaburg \cite{Di} gave other definitions, based on the dispersion of orbits, for uniformly continuous maps in metric spaces.
When the metric space is compact, these definitions yield the same quantity, which is an invariant of topological conjugacy. Also, Bowen \cite{RB1} gave a characterization of dimension type for topological entropy of
non-compact and non-invariant sets. Topological entropy has close relationships with many important dynamical properties, such as chaos, Lyapunov exponents, the growth of the number of periodic points and so on.
Moreover, positive topological entropy has remarkable role in the characterization of the dynamical behaviors,
for instance, Downarowicz proved that positive topological entropy implies chaos DC2 \cite{TG}. Thus, a lot of attention has been focused on computations and estimations of topological entropy of an autonomous dynamical system and many good results have been obtained \cite{LBWC,RB2,RB,LWG}.
Beyond autonomous dynamical systems, several authors provided conditions for computations and estimations
of topological entropy, for instance, Shao et al. \cite{SSZ} have given an estimation of lower bound of topological entropy for coupled-expanding systems associated with transition matrices in compact Hausdorff spaces.
Some knowledge of topological entropy of semigroup actions is also available in \cite{AB1, ABMU, FRPV}.

The notion of specification was introduced in the seventies as a property of uniformly
hyperbolic basic pieces and became a characterization of complexity in dynamical systems.
Rodrigues and Varandas \cite{FRPV} introduced some notions of specifcation for semigroup actions
and proved that any finitely generated continuous semigroup action on a compact metric space with the strong orbital specification property has positive topological entropy; moreover, every point is an entropy point. Roughly speaking, entropy points are those that their local neighborhoods reflect the complexity
of the entire dynamical system from the viewpoint of entropy theory. Also, these results extended to non-autonomous discrete dynamical systems \cite{JNFG}. In the current paper, we generalize the concepts of specification and topologival entropy to NAIFSs and investigate the relation between the specification property, topological entropy and topological chaocity of NAIFSs. Furthermore, a class of examples of NAIFSs is given where the
specification property holds.

The notion of topological pressure, using separated sets, was brought to the theory of dynamical systems by Ruelle \cite{DR}, later other definitions of topological pressure, based on open covers and spanning sets, were given by Walters \cite{PW} and it was further developed by Pesin and Pitskel \cite{YPBP}. Pesin \cite{YP1} used the dimension approach to the notion of topological pressure, which is based on the Caratheodory structure. Recently, there were several attempts to find suitable generalizations for other systems, see, for instance, \cite{HWZ} for non-autonomous discrete dynamical systems and \cite{DMMW,FRPV} for semigroup actions.

It is well-known that the topological pressure can be computed as the limiting complexity of the dynamical
system as the size scale approaches zero. Thus, several authors provided conditions so that the topological pressure of a dynamical system can be computed as a limit at a definite size scale. For instance, Rodrigues and Varandas \cite{FRPV} showed that the topological pressure of any continuous potential that satisfies the
bounded distortion condition can be computed as a limit at a definite size scale for any finitely generated
continuous semigroup action on a compact metric space with some kind of expansive property. Also, this
result extended to non-autonomous discrete dynamical systems by Nazarian Sarkooh and Ghane \cite{JNFG}.
In addition to generalizing the concept of topologival pressure to NAIFSs, one of the central objective of this paper is to extend this result to NAIFSs.\\ \\
\textbf{The paper is organized as follows.}
In Section \ref{section2}, we give the precise definition of an NAIFS and present an overview of the main concepts and introduce notations that will study throughout this paper. We define and study the topological entropy for NAIFSs in Section \ref{section3}. Especially, we generalize for the
case of NAIFSs the classical Bowen’s result \cite{RB2} saying that the topological entropy is concentrated on the
set of nonwandering points. Then, in Section \ref{section4}, we generalize the concept of specification to NAIFSs and characterize the entropy points for NAIFSs with the specification property and show that any NAIFS of surjective maps with the specification property has positive topological entropy and all points are entropy point.
In particular, each NAIFS with the specification property is topologically chaotic. In Section \ref{section5} we define and study the topological pressure for NAIFSs. Also, we introduce the notion of $\ast$-expansive NAIFS
and show that the topological pressure of any continuous potential can be computed as a limit at a definite size scale for every NAIFS with the $\ast$-expansive property.
Finally, in Section \ref{section6}, a special class of NAIFSs with the specification and $\ast$-expansive properties is introduced. Moreover, we illustrate two examples of NAIFSs which fit in our situation.
\section{Preliminaries}\label{section2}
Following \cite{LGMU}, a \emph{non}-\emph{autonomous iterated function system} (or NAIFS for short)
is a pair $(X,\Phi)$ in which $X$ is a set and $\Phi$ consists of a sequence $\{\Phi^{(j)}\}_{j\geq 1}$ of
collections of maps, where $\Phi^{(j)}=\{\varphi_{i}^{(j)}:X\to X\}_{i\in I^{(j)}} $ and $ I^{(j)} $ is a
non-empty finite index set for all $j\geq 1$. By $(X,\Phi_{k})$, we denote the pair of $X$ and shifted sequence
$\{\Phi^{(j)}\}_{j\geq k}$ and we use analogous notation for other sequences of objects related to an NAIFS.
If the set $X$ is a compact topological space and all the $\varphi_{i}^{(j)}$ are continuous, we speak of a \emph{topological} NAIFS. Note that in the case where all $\varphi_{i}^{(j)}$ are contraction affine similarities, this is
also referred to as a Moran set construction. For simplicity, we define the following symbolic spaces for positive integers $m,n\geq 1$:
$$ I^{m,n}:=\prod_{j=0}^{n-1}I^{(m+j)},\ \ \
I^{m,\infty}:=\prod_{j=m}^{\infty}I^{(j)}.$$
Elements of $ I^{1,n} $ are called \emph{initial} $n$-\emph{words}, while those of $I^{m,n}$
with $m>1$ are called \emph{non-initial} $n$-\emph{words}. If there is no confusion, we use
the term $n$-\emph{words} for these two cases without further characterization.

A word $w$ is called \emph{finite} if $w\in I^{m,n}$ for some $m,n\geq 1$, in this case its
\emph{length} is $n$ and denoted by $|w|:=n$. While, each word $w\in I^{m,\infty}$
is called an \emph{infinite} word and its length is infinity and denoted by $|w|:=\infty$. For finite (infinite)
word $w=w_{m}w_{m+1}\ldots w_{m+n-1}(w=w_{m}w_{m+1}\ldots)\in I^{m,n} (I^{m,\infty})$
and $1\leq k\leq |w|(1\leq k<\infty)$ we define $w|_{k}=w_{m}w_{m+1}\ldots w_{m+k-1}$ and
$w|^{k}=w_{m+k}\ldots w_{m+n-1} (w|_{k}=w_{m}w_{m+1}\ldots w_{m+k-1}\ \text{and}\
w|^{k}=w_{m+k}w_{m+k+1}\ldots)$.

The time evolution of the system is defined by composing the maps $\varphi_{i}^{(j)}$ in the obvious way. In general, for finite (infinite) word
$w=w_{m}w_{m+1}\ldots w_{m+n-1}(w=w_{m}w_{m+1}\ldots)\in I^{m,n}(I^{m,\infty})$
and $1\leq k\leq |w|(1\leq k<\infty)$ we define
\begin{equation*}\label{eq23}
\varphi_{w}^{m,k}:=\varphi_{w_{m+k-1}}^{(m+k-1)}\circ
\cdots\circ\varphi_{w_{m+1}}^{(m+1)}\circ\varphi_{w_{m}}^{(m)}\ \
\text{and}\ \ \varphi_{w}^{m,0}:=id_{X}.
\end{equation*}

We put $\varphi_{w}^{m,-k}:=(\varphi_{w}^{m,k})^{-1}$, which will be applied to sets, because we do not assume that the maps $\varphi_{i}^{(j)}$ are invertible. The \emph{orbit (trajectory)} of a point $x\in X$ is the
set $\{\varphi_{w}^{1,k}(x): k\geq 0\ \text{and}\ w\in I^{1,\infty}\}$. Also, for $w\in I^{1,\infty}$, the $w$-\emph{orbit} of $x\in X$ is the sequence $\{\varphi_{w}^{1,k}(x)\}_{k\geq 0}$.

Let NAIFS $(X,\Phi)$ and $n\geq 1$ be given. Denote by $(X,\Phi^{n})$ the NAIFS defined by
the sequence $\{\Phi^{(j,n)}\}_{j\geq 1}$, where $\Phi^{(j,n)}$ is the collection
$\{\varphi_{w_{j}^{\ast}}^{(j,n)}\}_{w_{j}^{\ast}\in I^{(j,n)}}$,
$I^{(j,n)}:=\{w_{j}^{\ast}\in I^{(j-1)n+1,n}\}$ (note that $I^{(j,n)}=I^{(j-1)n+1,n}$) and
$\varphi_{w_{j}^{\ast}}^{(j,n)}:=\varphi_{w_{jn}}^{(jn)}\circ
\cdots\circ\varphi_{w_{(j-1)n+2}}^{((j-1)n+2)}\circ\varphi_{w_{(j-1)n+1}}^{((j-1)n+1)}$
for $w_{j}^{\ast}=w_{(j-1)n+1}w_{(j-1)n+2}\ldots w_{jn}$.
Take $I_{\ast}^{m,k}:=\prod_{j=0}^{k-1}I^{(m+j,n)}$, then $\#(I_{\ast}^{1,m})=\#(I^{1,mn})$,
where $\#(A)$ is the cardinal number of the set $A$.
For $w=w_{1}w_{2}\ldots w_{mn}\in I^{1,mn}$ and $1\leq j\leq m$,
denote $w_{(j-1)n+1}w_{(j-1)n+2}\ldots w_{jn}$ by $w_{j}^{\ast}\in I^{(j,n)}$, then $w=w_{1}^{\ast}w_{2}^{\ast}\ldots w_{m}^{\ast}\in I_{\ast}^{1,m}$. For simplicity, we denote
elements in $I_{\ast}^{1,m}$ by $w^{\ast}$ and use analogous notation for other sequences of
objects related to an NAIFS.

Throughout this paper we consider topological NAIFSs $(X,\Phi)$ (except for Section \ref {section6}) so that $X$ is a compact metric space and $\Phi$ consists of a sequence $\{\Phi^{(j)}\}_{j\geq 1}$ of non-empty finite collections $\Phi^{(j)}$ of continuous self-maps.
\section{Topological entropy}\label{section3}
In this section we deal with the topological entropy of NAIFSs. First, we extend the classical definition of
toplogical entropy to NAIFSs via open covers. Then we give the Bowen-like definitions of topological entropy for NAIFSs and show that these different definitions coincide. We will also establish some basic properties for topological entropy of NAIFSs. Especially, we recover the classical Bowen's result to NAIFSs ensures that the topological entropy is concentrated on the set of nonwandering points.
\subsection{Topological entropy of NAIFSs via open covers}
In this subsection we are going to extend the definition of topological entropy to NAIFSs via open covers,
which is a natural generalization of the definition of topological entropy for autonomous dynamical systems \cite{PW}, non-autonomous discrete dynamical systems \cite{SKLS} and semigroup actions \cite{JTBLWC}.
In fact, if $\#(I^{(j)})=1$ and $\Phi^{(j)}=\{\varphi_{1}^{(j)}\}$ for every $j\geq 1$, then we get the
defnition of topological entropy for non-autonomous discrete dynamical system $(X,\varphi_{1,\infty})$,
where $\varphi_{1,\infty}$ is the sequence $\{\varphi_{1}^{(j)}\}_{j=1}^{\infty}$. Additionally, if $\varphi_{1}^{(j)}=\varphi$
for every $j\geq 1$, then we get the classical defnition of topological entropy for autonomous dynamical
system $(X,\varphi)$. Moreover, in the case that $\Phi^{(i)}=\Phi^{(j)}$ for all $i,j\geq 1$, then we get the defnition of topological entropy for semigroup action $(X,G)$ with generator set $\{\varphi_{i}^{(1)}:i\in I^{(1)}\}$.

Let $(X,\Phi)$ be an NAIFS of continuous maps on a compact topological space $X$. We define its topological entropy as follows. A family $\mathcal{A}$ of subsets of $X$ is called a \emph{cover} (of $X$) if their union is
all of $X$. For open covers $\mathcal{A}_{1},\mathcal{A}_{2},\ldots,\mathcal{A}_{n}$ of $X$ we denote
\begin{equation*}
\bigvee_{i=1}^{n}\mathcal{A}_{i}=\mathcal{A}_{1}\vee\mathcal{A}_{2}\vee
\cdots\vee\mathcal{A}_{n}=\{A_{1}\cap A_{2}\cap\cdots\cap A_{n}:
A_{i}\in\mathcal{A}_{i}\ \text{for}\ 1\leq i\leq n\}.
\end{equation*}
Note that $\bigvee_{i=1}^{n}\mathcal{A}_{i}$ is also an open cover of $X$. For an open cover
$\mathcal{A}$, finite word $w=w_{m}w_{m+1}\ldots w_{m+n-1}\in I^{m,n}$ and $0\leq j\leq n$ we
denote $\varphi_{w}^{m,-j}(\mathcal{A})=\{\varphi_{w}^{m,-j}(A): A\in\mathcal{A}\}$ and $\mathcal{A}_{w}^{m,n}:=\bigvee_{j=0}^{n}\varphi_{w}^{m,-j}(\mathcal{A})$. For each
$0\leq j\leq n$, $\varphi_{w}^{m,-j}(\mathcal{A})$ is an open cover, so $\mathcal{A}_{w}^{m,n}$
is also an open cover. Next, we denote by $\mathcal{N}(\mathcal{A})$ the \emph{minimal possible
cardinality of a subcover chosen from} $\mathcal{A}$. Then
\begin{equation*}
 h(X,\Phi;\mathcal{A}):=\limsup_{n\to\infty}\frac{1}{n}\log\Bigg(\dfrac{1}{\#(I^{1,n})}\sum_{w\in I^{1,n}}\mathcal{N}(\mathcal{A}_{w}^{1,n})\Bigg)
\end{equation*}
is said to be the \emph{topological entropy} of NAIFS $(X,\Phi)$ on the cover $\mathcal{A}$,
where $\#(I^{1,n})$ is the cardinality of the set $I^{1,n}$. The \emph{topological entropy} of
NAIFS $(X,\Phi)$ is defined by
\begin{equation*}
 h_{\text{top}}(X,\Phi):=\sup\{h(X,\Phi;\mathcal{A}): \mathcal{A}\ \text{is an open cover of}\ X\}.
\end{equation*}

For open covers $\mathcal{A}, \mathcal{B}$ of $X$, continuous map $g:X\to X$ and finite word $w\in I^{m,n}$, the following inequalities hold:
\begin{equation}\label{eq7}
\mathcal{N}(\mathcal{A}\vee\mathcal{B})\leq\mathcal{N}(\mathcal{A})\ .\ \mathcal{N}(\mathcal{B}),
\end{equation}
\begin{equation}\label{eq6}
\mathcal{N}(\varphi_{w}^{m,-n}(\mathcal{A}))\leq\mathcal{N}(\mathcal{A}),
\end{equation}
\begin{equation}\label{eq5}
g^{-1}(\mathcal{A}\vee\mathcal{B})=g^{-1}(\mathcal{A})\vee g^{-1}(\mathcal{B}).
\end{equation}

We say that a cover $\mathcal{A}$ is finer than a cover $\mathcal{B}$, and write $\mathcal{A}>\mathcal{B}$, when each element of $\mathcal{A}$ is contained in some element of $\mathcal{B}$. If $\mathcal{A}>\mathcal{B}$, then $\mathcal{N}(\mathcal{A})\geq\mathcal{N}(\mathcal{B})$ and $\mathcal{A}_{w}^{1,n}>\mathcal{B}_{w}^{1,n}$ for each $w\in I^{1,n}$. Hence,
\begin{equation}\label{n55}
\text{if}\ \mathcal{A}>\mathcal{B}\ \text{then}\ h(X,\Phi;\mathcal{A})\geq h(X,\Phi;\mathcal{B}).
\end{equation}

Since $X$ is compact, in the definition of $h_{\text{top}}(X,\Phi)$ it is sufficient to take the supremum
only over all open finite covers. If $\mathcal{A}$ is an open finite cover of $X$ and $w\in I^{1,n}$ then
the cardinality of $\mathcal{A}_{w}^{1,n}$ is at most $(\#(\mathcal{A}))^{n}$.
Therefore, $h(X,\Phi;\mathcal{A})\leq\log(\#(\mathcal{A}))$ and
so $0\leq h(X,\Phi;\mathcal{A})<\infty$. But, it can be $h_{\text{top}}(X,\Phi)=\infty$.

Now, we extend the definition of topological entropy of an NAIFS to not necessarily compact and not necessarily invariant subsets of a compact topological space. Note that the idea of defining the topological entropy for
non-compact and non-invariant sets is not new. See \cite{RB1} and \cite{YP}, where Bowen and Pesin introduce the dimension definition of topological entropy for autonomous dynamical systems, that applied to not necessarily compact and not necessarily invariant subsets of a topological space. Let $(X,\Phi)$ be an NAIFS of continuous maps on a compact topological space $X$ and $Y$ be a non-empty subset of $X$. The set $Y$ may not be
compact and may not exhibit any kind of invariance with respect to $\Phi$. If $\mathcal{A}$ is a cover of $X$ we denote by $\mathcal{A}|_{Y}$ the cover $\{A\cap Y:A\in\mathcal{A}\}$ of the set $Y$. Then we define the \emph{topological entropy} of NAIFS $(X,\Phi)$ on the set $Y$ by
\begin{equation*}
 h_{\text{top}}(Y, \Phi):=\sup\{h(Y,\Phi;\mathcal{A}): \mathcal{A}\ \text{is an open cover of}\ X\},
\end{equation*}
where
\begin{equation*}
 h(Y,\Phi;\mathcal{A}):=\limsup_{n\to\infty}\frac{1}{n}\log\Bigg(\dfrac{1}{\#(I^{1,n})}\sum_{w\in I^{1,n}}\mathcal{N}(\mathcal{A}_{w}^{1,n}|_{Y})\Bigg).
\end{equation*}
\subsection{Equivalent Bowen-like definitions of topological entropy}
Let $(X,\Phi)$ be an NAIFS of continuous maps on a compact metric space $(X,d)$. For finite (infinite)
word $w=w_{m}w_{m+1}\ldots w_{m+n-1}(w=w_{m}w_{m+1}\ldots)\in I^{m,n} (I^{m,\infty})$
and $1\leq k\leq |w|(1\leq k<\infty)$ we introduce on $X$ the \emph{Bowen}-\emph{metrics}
\begin{equation}\label{eq18}
d_{w,k}(x,y):=\max_{0\leq j\leq k}d(\varphi_{w}^{m,j}(x),\varphi_{w}^{m,j}(y)).
\end{equation}
Also, for finite (infinite)
word $w=w_{m}w_{m+1}\ldots w_{m+n-1}(w=w_{m}w_{m+1}\ldots)\in I^{m,n} (I^{m,\infty})$,
$1\leq k\leq |w|(1\leq k<\infty)$ , $x\in X$ and $\epsilon>0$, we define
\begin{equation}\label{eq14}
B(x;w,k,\epsilon):=\{y\in X: d_{w,k}(x,y)<\epsilon\},
\end{equation}
which is called the \emph{dynamical} $(k+1)$-\emph{ball} with radius $\epsilon$ relative to word $w$
around $x$.

Fix $w\in I^{1,n}$ for some $n\geq 1$. A subset $E$ of the space $X$ is called
$(n,w,\epsilon; \Phi)$-separated, if for any two distinct points $x,y\in E$, $d_{w,n}(x,y)>\epsilon$
(note that $|w|=n$). Also, a subset $F$ of the space $X$, $(n,w,\epsilon; \Phi)$-spans another subset
$K\subseteq X$, if for each $x\in K$ there is a $y\in F$ such that $d_{w,n}(x,y)\leq\epsilon$.
For subset $Y$ of $X$ we define $s_{n}(Y;w,\epsilon,\Phi)$, as the maximal cardinality of
an $(n,w,\epsilon; \Phi)$-separated set in $Y$ and $r_{n}(Y;w,\epsilon,\Phi)$ as the minimal cardinality
of a set in $Y$ which $(n,w,\epsilon; \Phi)$-spans $Y$. If $Y=X$ we sometime suppress $Y$ and shortly
write $s_{n}(w,\epsilon,\Phi)$ and $r_{n}(w,\epsilon,\Phi)$.
\begin{lemma}\label{lemma1}
Let $(X,\Phi)$ be an NAIFS of continuous maps on a compact metric space $(X,d)$ and $Y$ be a non-empty
subset of $X$. Then,
\begin{equation*}
 h_{\text{top}}(Y, \Phi)=\lim_{\epsilon\to 0}\limsup_{n\to\infty}\frac{1}{n}\log S_{n}(Y;\epsilon,\Phi)
=\lim_{\epsilon\to 0}\limsup_{n\to\infty}\frac{1}{n}\log R_{n}(Y;\epsilon,\Phi),\ \text{where}
\end{equation*}
\begin{equation*}
 S_{n}(Y;\epsilon,\Phi):=\dfrac{1}{\#(I^{1,n})}\sum_{w\in I^{1,n}}s_{n}(Y;w,\epsilon,\Phi)\ \text{and}\
 R_{n}(Y;\epsilon,\Phi):=\dfrac{1}{\#(I^{1,n})}\sum_{w\in I^{1,n}}r_{n}(Y;w,\epsilon,\Phi).
\end{equation*}
\end{lemma}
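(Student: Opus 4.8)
The plan is to prove both equalities at once by establishing a cyclic chain of inequalities among the three quantities, reducing everything to estimates that hold \emph{separately} for each word $w\in I^{1,n}$ and only then averaging over $w$. Because every step I use preserves the normalized sum $\frac{1}{\#(I^{1,n})}\sum_{w}$, the passage from the classical single-map theory to the NAIFS setting is essentially automatic once the per-word inequalities are in hand. Writing $h_{S}$ and $h_{R}$ for the two right-hand limits (separated and spanning, respectively), I would prove $h_{S}\le h_{\text{top}}(Y,\Phi)\le h_{R}$ together with $h_{R}=h_{S}$, which forces all three to coincide.

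First I would compare separated and spanning sets for a fixed word. Since $d_{w,n}$ is genuinely a metric (a maximum of the pullback pseudometrics $d(\varphi_{w}^{1,j}\cdot,\varphi_{w}^{1,j}\cdot)$, hence obeying the triangle inequality), the standard two-sided comparison carries over verbatim: a maximal $(n,w,\epsilon;\Phi)$-separated subset of $Y$ is automatically $(n,w,\epsilon;\Phi)$-spanning, giving $r_{n}(Y;w,\epsilon,\Phi)\le s_{n}(Y;w,\epsilon,\Phi)$, while sending each point of a separated set to a nearby point of an $(n,w,\epsilon/2;\Phi)$-spanning set is injective by the triangle inequality, giving $s_{n}(Y;w,\epsilon,\Phi)\le r_{n}(Y;w,\epsilon/2,\Phi)$. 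Averaging over $w$ yields $R_{n}(Y;\epsilon,\Phi)\le S_{n}(Y;\epsilon,\Phi)\le R_{n}(Y;\epsilon/2,\Phi)$; applying $\limsup_{n}\frac{1}{n}\log(\cdot)$ and then letting $\epsilon\to 0$ (the relevant limits existing by monotonicity of each $\limsup_{n}$ in $\epsilon$) produces $h_{R}=h_{S}$.

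Next I would tie these metric quantities to the open-cover definition. For $h_{S}\le h_{\text{top}}(Y,\Phi)$, fix $\epsilon>0$ and, using compactness of $X$, choose a finite open cover $\mathcal{A}$ all of whose members have diameter less than $\epsilon$; if $E\subseteq Y$ is $(n,w,\epsilon;\Phi)$-separated, then no element $\bigcap_{j=0}^{n}\varphi_{w}^{1,-j}(A_{j})$ of $\mathcal{A}_{w}^{1,n}$ can contain two distinct points of $E$, as that would force $d_{w,n}<\epsilon$, so any subcover of $\mathcal{A}_{w}^{1,n}|_{Y}$ has at least $\#E$ members and $s_{n}(Y;w,\epsilon,\Phi)\le\mathcal{N}(\mathcal{A}_{w}^{1,n}|_{Y})$. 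For the reverse inequality $h_{\text{top}}(Y,\Phi)\le h_{R}$, fix an open cover $\mathcal{A}$ and let $\delta$ be a Lebesgue number for it, meaning every open ball of radius $\delta$ lies in some member of $\mathcal{A}$; if $F\subseteq Y$ is $(n,w,\epsilon;\Phi)$-spanning with $\epsilon<\delta$, then for each $y\in F$ and each $0\le j\le n$ one has $\varphi_{w}^{1,j}(x)\in B(\varphi_{w}^{1,j}(y),\delta)\subseteq A_{j}$ for the nearby points $x$, so the sets $\bigcap_{j}\varphi_{w}^{1,-j}(A_{j})$ indexed by $y\in F$ cover $Y$ and realize a subcover of $\mathcal{A}_{w}^{1,n}|_{Y}$, whence $\mathcal{N}(\mathcal{A}_{w}^{1,n}|_{Y})\le r_{n}(Y;w,\epsilon,\Phi)$. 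Averaging each per-word bound over $w$ and taking the usual limits gives $h_{S}\le h_{\text{top}}(Y,\Phi)$ and $h_{\text{top}}(Y,\Phi)\le h_{R}$.

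Assembling $h_{S}\le h_{\text{top}}(Y,\Phi)\le h_{R}=h_{S}$ then closes the proof. The separated-versus-spanning comparison and the diameter bound are routine; the step I expect to demand the most care is the Lebesgue-number direction, where one must verify that the spanning set at scale $\epsilon<\delta$ really produces a subcover of $\mathcal{A}_{w}^{1,n}|_{Y}$ uniformly in $w$, and reconcile the strict inequality in the Bowen ball with the non-strict inequality in the definition of spanning (handled by keeping $\epsilon$ strictly below $\delta$). A secondary point worth making explicit is that $Y$ need not be compact, so every $\mathcal{A}$ must be taken as an open cover of the ambient compact space $X$ and only afterwards restricted to $Y$; this is precisely why the Lebesgue number, a property of $\mathcal{A}$ on $X$, remains available even though $Y$ itself carries no compactness.
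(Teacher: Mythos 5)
Your proposal is correct and follows essentially the same route as the paper's proof: the per-word chain $r_{n}(Y;w,\epsilon,\Phi)\leq s_{n}(Y;w,\epsilon,\Phi)\leq r_{n}(Y;w,\frac{\epsilon}{2},\Phi)$ averaged over $w$ to get $h_{R}=h_{S}$, the small-diameter cover argument for $s_{n}(Y;w,\epsilon,\Phi)\leq\mathcal{N}(\mathcal{A}_{w}^{1,n}|_{Y})$, and the Lebesgue-number argument for $\mathcal{N}(\mathcal{A}_{w}^{1,n}|_{Y})\leq r_{n}(Y;w,\epsilon,\Phi)$. Your closing remarks about keeping $\epsilon$ strictly below the Lebesgue number and taking covers of the ambient compact space $X$ rather than of $Y$ are exactly the points the paper's proof also relies on.
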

\begin{proof}
First we prove the second equality that is an immediate consequence of the following relation
\begin{equation*}
R_{n}(Y;\epsilon,\Phi)\leq S_{n}(Y;\epsilon,\Phi)\leq R_{n}(Y;\frac{\epsilon}{2},\Phi)\ \text{for all}\ \epsilon>0.
\end{equation*}
To prove this relation, it is enough to show that
\begin{equation}\label{eq2}
r_{n}(Y;w,\epsilon,\Phi)\leq s_{n}(Y;w,\epsilon,\Phi)\leq r_{n}(Y;w,\frac{\epsilon}{2},\Phi)\ \text{for all}\ \epsilon>0\ \text{and}\ w\in I^{1,n}.
\end{equation}
Fix $\epsilon>0$ and $w\in I^{1,n}$. It is obvious that any maximal $(n,w,\epsilon; \Phi)$-separated subset of $Y$ is an $(n,w,\epsilon; \Phi)$-spanning set for $Y$. Therefore $r_{n}(Y;w,\epsilon,\Phi)\leq s_{n}(Y;w,\epsilon,\Phi)$. To show the other inequality of (\ref{eq2}) suppose $E$ is an $(n,w,\epsilon; \Phi)$-separated subset of $Y$ and $F\subset X$ is an $(n,w,\frac{\epsilon}{2}; \Phi)$-spanning set of $Y$. Define $\psi:E\to F$ by choosing, for cach $x\in E$, some point $\psi(x)\in F$ with $d_{w,n}(x,\psi(x))\leq\frac{\epsilon}{2}$. Then $\psi$ is injective and therefore the cardinality of $E$ is not greater than that of $F$. Hence, $s_{n}(Y;w,\epsilon,\Phi)\leq r_{n}(Y;w,\frac{\epsilon}{2},\Phi)$. This completes the proof of relation (\ref{eq2}).

To prove the first equality, let $\epsilon>0$ and $w\in I^{1,n}$ be given. Let $E$ be an $(n,w,\epsilon; \Phi)$-separated subset of $Y$ and $\mathcal{A}$ be an open cover of $X$ by sets of diameter less than $\epsilon$. Then by definition of $(n,w,\epsilon; \Phi)$-separated sets two distinct point of $E$ cannot lie in the same element of
$\mathcal{A}\vee\varphi_{w}^{1,-1}(\mathcal{A})\vee\varphi_{w}^{1,-2}(\mathcal{A})\vee\cdots\vee\varphi_{w}^{1,-n}(\mathcal{A})$.
Therefore $s_{n}(Y;w,\epsilon,\Phi)\leq\mathcal{N}(\mathcal{A}_{w}^{1,n}|_{Y})$. Hence, by the definition of topological entropy, it follows that
\begin{equation}\label{eq3}
h_{\text{top}}(Y, \Phi)\geq\lim_{\epsilon\to 0}\limsup_{n\to\infty}\frac{1}{n}\log S_{n}(Y;\epsilon,\Phi).
\end{equation}

To prove the inverse of relation (\ref{eq3}), let $\mathcal{A}$ be an open cover of $X$ and $\lambda>0$ be a Lebesgue number for $\mathcal{A}$. Then, for every $x\in X$ and $\epsilon<\frac{\lambda}{2}$, the closed $\epsilon$-ball $B_{\epsilon}(x)$ lies inside some element $A_{\alpha}\in\mathcal{A}$. Fix $w\in I^{1,n}$. Let $F$ be an $(n,w,\epsilon; \Phi)$-spanning set of $Y$ with minimal cardinality $r_{n}(Y;w,\epsilon,\Phi)$. For each $z\in F$ and each $0\leq k\leq n$ (note that $|w|=n$), let $A_{k}(z)$ be some element of $\mathcal{A}$ containing $B_{\epsilon}(\varphi_{w}^{1,k}(z))$. On the other hand, as $F$ is an $(n,w,\epsilon; \Phi)$-spanning set of $Y$,
for any $y\in Y$ there is a $z\in F$ such that $\varphi_{w}^{1,k}(y)\in B_{\epsilon}(\varphi_{w}^{1,k}(z))$ for $0\leq k\leq n$. Thus, $\varphi_{w}^{1,k}(y)\in A_{k}(z)$ for $0\leq k\leq n$, and the family
\begin{equation*}
 \big\{A_{0}(z)\cap\varphi_{w}^{1,-1}(A_{1}(z))\cap\cdots\cap\varphi_{w}^{1,-n}(A_{n}(z))\cap Y: z\in F\big\}
\end{equation*}
is a subcover of the cover $\mathcal{A}_{w}^{1,n}|_{Y}$ of $Y$. Hence, $\mathcal{N}(\mathcal{A}_{w}^{1,n}|_{Y})\leq \#(F)=r_{n}(Y;w,\epsilon,\Phi)$. Now, by the definition of topological entropy and second equality, we get
\begin{equation*}
h_{\text{top}}(Y, \Phi)\leq\lim_{\epsilon\to 0}\limsup_{n\to\infty}\frac{1}{n}\log R_{n}(Y;\epsilon,\Phi)=\lim_{\epsilon\to 0}\limsup_{n\to\infty}\frac{1}{n}\log S_{n}(Y;\epsilon,\Phi),
\end{equation*}
which completes the proof.
\end{proof}
\begin{remark}
The following two facts hold:
\begin{itemize}
\item The limits in the previous lemma can be replaced by $\sup_{\epsilon>0}$, because for $\epsilon_{2}<\epsilon_{1}$ and $w\in I^{1,n}$ we have
$$r_{n}(Y;w,\epsilon_{2},\Phi)\geq r_{n}(Y;w,\epsilon_{1},\Phi)\ \ \text{and}\ \
s_{n}(Y;w,\epsilon_{2},\Phi)\geq s_{n}(Y;w,\epsilon_{1},\Phi).$$
\item $r_{n}(Y;w,\epsilon,\Phi)$ is defined for $w\in I^{1,n}$ as the minimal cardinality of a
set in $Y$ which $(n,w,\epsilon; \Phi)$-spans $Y$. If we take $r_{n}^{X}(Y;w,\epsilon,\Phi)$ for
$w\in I^{1,n}$ as the minimal cardinality of a set in $X$ which $(n,w,\epsilon; \Phi)$-spans $Y$,
again we have
\begin{equation*}
 h_{\text{top}}(Y, \Phi)=\lim_{\epsilon\to 0}\limsup_{n\to\infty}\frac{1}{n}\log R^{X}_{n}(Y;\epsilon,\Phi),
\end{equation*}
where
\begin{equation*}
 R_{n}^{X}(Y;\epsilon,\Phi):=\dfrac{1}{\#(I^{1,n})}\sum_{w\in I^{1,n}}r_{n}^{X}(Y;w,\epsilon,\Phi).
\end{equation*}
Hence, it is not important that we take $r_{n}(Y;w,\epsilon,\Phi)$ for $w\in I^{1,n}$ as the minimal cardinality of a set in $Y$ which $(n,w,\epsilon;\Phi)$-spans $Y$ or as the minimal cardinality of a set in $X$
which $(n,w,\epsilon;\Phi)$-spans $Y$.
\end{itemize}
\end{remark}
\subsection{Basic properties of topological entropy}
In this subsection we are going to give the basic properties of topological entropy of NAIFSs.
\begin{lemma}\label{lemma2}
Let for $1\leq i\leq k$, $n=1,2,\ldots$ and $w\in I^{n}$ in which $I^{n}$ is a non-empty finite
set, $a_{n,w,i}$'s be non-negative numbers. Then
\begin{equation*}
 \limsup_{n\to\infty}\frac{1}{n}\log\Big(\frac{1}{\#(I^{n})}\sum_{\substack{w\in I^{n} \\
1\leq i\leq k}}a_{n,w,i}\Big)=\max_{1\leq i\leq k}\limsup_{n\to\infty}\frac{1}{n}\log\Big(\frac{1}{\#(I^{n})}\sum_{w\in I^{n}}a_{n,w,i}\Big).
\end{equation*}
\end{lemma}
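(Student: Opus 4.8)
The plan is to first remove the double index by interchanging the two summations, and then to reduce the whole claim to an elementary statement about the $\limsup$ of a finite maximum of sequences.

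First I would set $A_{n,i} := \frac{1}{\#(I^{n})}\sum_{w\in I^{n}} a_{n,w,i}$ for $1\le i\le k$. Because the sum over the pair $(w,i)$ factors as $\sum_{i=1}^{k}\sum_{w\in I^{n}}$, the quantity inside the logarithm on the left-hand side is exactly $\sum_{i=1}^{k}A_{n,i}$. Thus, adopting the convention $\log 0=-\infty$, it suffices to prove
$$\limsup_{n\to\infty}\frac{1}{n}\log\Big(\sum_{i=1}^{k}A_{n,i}\Big)=\max_{1\le i\le k}\limsup_{n\to\infty}\frac{1}{n}\log A_{n,i}.$$
The engine of the argument is the elementary sandwich $\max_{1\le i\le k}A_{n,i}\le\sum_{i=1}^{k}A_{n,i}\le k\,\max_{1\le i\le k}A_{n,i}$, which holds because every $A_{n,i}$ is non-negative.

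For the inequality $\ge$, I would note that $\sum_{i}A_{n,i}\ge A_{n,j}$ for each fixed $j$, hence $\frac{1}{n}\log\sum_{i}A_{n,i}\ge\frac{1}{n}\log A_{n,j}$; taking $\limsup_{n}$ and then the maximum over $j$ gives the desired bound. For the inequality $\le$, the right half of the sandwich gives $\frac{1}{n}\log\sum_{i}A_{n,i}\le\frac{1}{n}\log k+\max_{i}\frac{1}{n}\log A_{n,i}$, and since $\frac{1}{n}\log k\to 0$, passing to $\limsup_{n}$ reduces everything to the inequality $\limsup_{n}\max_{i}x_{n,i}\le\max_{i}\limsup_{n}x_{n,i}$, where $x_{n,i}:=\frac{1}{n}\log A_{n,i}$.

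This last interchange is the only non-formal step and the place where the finiteness of $k$ is indispensable. I would prove it by choosing a subsequence $(n_{\ell})$ along which $\max_{i}x_{n_{\ell},i}$ converges to $L:=\limsup_{n}\max_{i}x_{n,i}$, selecting for each $\ell$ an index $i(\ell)$ realizing that maximum, and applying the pigeonhole principle to the finitely many possible values of $i(\ell)$ to extract a single index $i_{0}$ occurring infinitely often. Along the corresponding sub-subsequence $x_{\cdot,i_{0}}$ converges to $L$, so $\max_{j}\limsup_{n}x_{n,j}\ge\limsup_{n}x_{n,i_{0}}\ge L$. The cases $L=\pm\infty$ are handled verbatim in the extended reals. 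Combining the two inequalities finishes the proof, and I expect this pigeonhole step to be the main, and essentially the only, obstacle.
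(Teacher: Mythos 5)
Your proposal is correct and follows essentially the same route as the paper: the paper makes exactly the same reduction, setting $a_{n,i}:=\frac{1}{\#(I^{n})}\sum_{w\in I^{n}}a_{n,w,i}$ and then invoking the known single-index lemma ($\limsup_{n}\frac{1}{n}\log\sum_{i}a_{n,i}=\max_{i}\limsup_{n}\frac{1}{n}\log a_{n,i}$) from the cited references. The only difference is that you supply the elementary sandwich-plus-pigeonhole proof of that lemma rather than citing it, and that argument is sound.
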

\begin{proof}
It is actually a direct consequence of a priori simpler expression considered for non-autonomous dynamical systems (see \cite[Lemma 4.1]{SKLS}
and \cite[Lemma 4.1.9]{LAJLMM}),
taking
$$a_{n,i}:=\frac{1}{\#(I^{n})}\sum_{w\in I^{n}}a_{n,w,i}.$$
\end{proof}
\begin{proposition}\label{proposition1}
Let $(X,\Phi)$ be an NAIFS of continuous maps on a compact topological space $X$. If $X=\bigcup_{i=1}^{k}X_{i}$ in which each $X_{i}$ is an arbitrary non-empty subset of $X$, then
\begin{equation*}
 h_{\text{top}}(X,\Phi)=\max_{1\leq i\leq k}h_{\text{top}}(X_{i},\Phi).
\end{equation*}
\end{proposition}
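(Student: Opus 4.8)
The plan is to prove the two inequalities separately. For the direction $\max_{1\le i\le k}h_{\text{top}}(X_i,\Phi)\le h_{\text{top}}(X,\Phi)$, I would first record the monotonicity of the entropy in its set argument. If $Y\subseteq Z\subseteq X$ and $\mathcal{A}$ is any open cover of $X$, then every subcover of $\mathcal{A}_{w}^{1,n}|_{Z}$ covers $Z\supseteq Y$ and hence restricts to a cover of $Y$, so that $\mathcal{N}(\mathcal{A}_{w}^{1,n}|_{Y})\le\mathcal{N}(\mathcal{A}_{w}^{1,n}|_{Z})$ for each $w\in I^{1,n}$. Averaging over $w$ and passing to the $\limsup$ gives $h(Y,\Phi;\mathcal{A})\le h(Z,\Phi;\mathcal{A})$, and taking the supremum over $\mathcal{A}$ yields $h_{\text{top}}(Y,\Phi)\le h_{\text{top}}(Z,\Phi)$. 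Applying this with $Z=X$ and $Y=X_i$ for each $i$ settles the easy inequality.

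For the reverse inequality the key combinatorial observation is a subadditivity of the covering number under the decomposition $X=\bigcup_{i=1}^{k}X_i$. Fix an open cover $\mathcal{A}$ of $X$ and a word $w\in I^{1,n}$. For each $i$ choose $\mathcal{C}_i\subseteq\mathcal{A}_{w}^{1,n}$ of cardinality $\mathcal{N}(\mathcal{A}_{w}^{1,n}|_{X_i})$ such that $\{B\cap X_i : B\in\mathcal{C}_i\}$ is a minimal subcover of $\mathcal{A}_{w}^{1,n}|_{X_i}$; then the sets in $\mathcal{C}_i$ already cover $X_i$, so $\bigcup_{i=1}^{k}\mathcal{C}_i$ is a subcover of $\mathcal{A}_{w}^{1,n}$ of $X$. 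This proves
\[
\mathcal{N}(\mathcal{A}_{w}^{1,n})\le\sum_{i=1}^{k}\mathcal{N}(\mathcal{A}_{w}^{1,n}|_{X_i}),
\]
and averaging over $w\in I^{1,n}$ gives $\frac{1}{\#(I^{1,n})}\sum_{w}\mathcal{N}(\mathcal{A}_{w}^{1,n})\le\frac{1}{\#(I^{1,n})}\sum_{w,i}\mathcal{N}(\mathcal{A}_{w}^{1,n}|_{X_i})$.

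Next I would invoke Lemma \ref{lemma2} with $I^{n}=I^{1,n}$ and $a_{n,w,i}:=\mathcal{N}(\mathcal{A}_{w}^{1,n}|_{X_i})$. Applying $\frac{1}{n}\log(\cdot)$ to the averaged inequality and letting $n\to\infty$, the left-hand side has $\limsup$ at least $h(X,\Phi;\mathcal{A})$, while the right-hand side equals $\max_{1\le i\le k}h(X_i,\Phi;\mathcal{A})$ by Lemma \ref{lemma2}. Hence $h(X,\Phi;\mathcal{A})\le\max_i h(X_i,\Phi;\mathcal{A})\le\max_i h_{\text{top}}(X_i,\Phi)$ for every open cover $\mathcal{A}$, and taking the supremum over $\mathcal{A}$ yields $h_{\text{top}}(X,\Phi)\le\max_i h_{\text{top}}(X_i,\Phi)$. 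Combining this with the monotonicity inequality finishes the proof.

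I expect the main obstacle to be the correct handling of the exponential sum. The crude bound $\log\sum\le\max\log+\log k$ only produces the desired identity after dividing by $n$ and letting $n\to\infty$, and the clean statement that the additive $\log k$ term vanishes and that the $\limsup$ of the averaged sum equals the maximum of the individual $\limsup$s is exactly what Lemma \ref{lemma2} packages. Care is also needed because the covering numbers are averaged over $w\in I^{1,n}$ \emph{before} the logarithm is taken, so one cannot argue pointwise in $w$; the averaging must be carried through and the interchange of maximum and $\limsup$ justified through Lemma \ref{lemma2} rather than by hand.
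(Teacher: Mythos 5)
Your proof is correct and follows essentially the same route as the paper's: the easy inequality via monotonicity of $h_{\text{top}}$ in the set argument, and the reverse inequality via the subadditivity bound $\mathcal{N}(\mathcal{A}_{w}^{1,n})\le\sum_{i=1}^{k}\mathcal{N}(\mathcal{A}_{w}^{1,n}|_{X_i})$ combined with Lemma \ref{lemma2}. Your version is, if anything, slightly more careful in lifting the minimal subcovers of $\mathcal{A}_{w}^{1,n}|_{X_i}$ back to genuine elements of $\mathcal{A}_{w}^{1,n}$ before forming the union.
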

Note that, we do not need to assume that the sets $X_{i}$ are closed or invariant (invariant in the sense that
they contain the trajectories of all points), because we have defined the topological entropy of NAIFS $(X,\Phi)$ on every subset of $X$.
\begin{proof}
By the definition of topological entropy we have $h_{\text{top}}(X,\Phi)\geq\max_{1\leq i\leq k}h_{\text{top}}(X_{i},\Phi)$. To prove the reverse inequality, let $w\in I^{1,n}$ and $\mathcal{A}$ be an open cover of $X$. Let $\mathcal{C}_{1}, \mathcal{C}_{2}, \ldots, \mathcal{C}_{k}$ be subcovers chosen from the covers $\mathcal{A}_{w}^{1,n}|_{X_{1}},\mathcal{A}_{w}^{1,n}|_{X_{2}}, \ldots, \mathcal{A}_{w}^{1,n}|_{X_{k}}$, respectively. Then each element of $\mathcal{C}=\mathcal{C}_{1}\cup\mathcal{C}_{2}\cup\cdots\cup\mathcal{C}_{k}$ is contained in some element of $\mathcal{A}_{w}^{1,n}$ and $\mathcal{C}$ is an open cover of $X$. This implies
$\mathcal{N}(\mathcal{A}_{w}^{1,n})\leq\sum_{i=1}^{k}\mathcal{N}(\mathcal{A}_{w}^{1,n}|_{X_{i}})$.
Now, by Lemma \ref{lemma2}, we get
\begin{eqnarray*}
h(X,\Phi;\mathcal{A})
&=& \limsup_{n\to\infty}\frac{1}{n}\log\Big(\dfrac{1}{\#(I^{1,n})}\sum_{w\in I^{1,n}}\mathcal{N}(\mathcal{A}_{w}^{1,n})\Big)\\
&\leq & \limsup_{n\to\infty}\frac{1}{n}\log\Big(\dfrac{1}{\#(I^{1,n})}\sum_{\substack{w\in I^{1,n} \\
1\leq i\leq k}}\mathcal{N}(\mathcal{A}_{w}^{1,n}|_{X_{i}})\Big)\\
&=& \max_{1\leq i\leq k}\limsup_{n\to\infty}\frac{1}{n}\log\Big(\dfrac{1}{\#(I^{1,n})}
\sum_{w\in I^{1,n}}\mathcal{N}(\mathcal{A}_{w}^{1,n}|_{X_{i}})\Big)\\
&=& \max_{1\leq i\leq k}h(X_{i},\Phi;\mathcal{A})\leq\max_{1\leq i\leq k}h_{\text{top}}(X_{i},\Phi).
\end{eqnarray*}
Since open cover $\mathcal{A}$ was arbitrary, we conclude that $h_{\text{top}}(X,\Phi)\leq\max_{1\leq i\leq k}h_{\text{top}}(X_{i},\Phi)$, which completes the proof.
\end{proof}
Now, we give an analogue of the well known property
$h_{\text{top}}(\varphi^{n})=n\ .\ h_{\text{top}}(\varphi)$ of the topological entropy of autonomous
dynamical systems to NAIFSs that will be used in the proof of Theorem \ref{theorem10}.

The following result is now folklore and we omit its
proof, see \cite[Lemma 4.2]{SKLS}.
\begin{lemma}\label{lemma3}
Let $(X,\Phi)$ be an NAIFS of continuous maps on a compact topological space $X$. Then for any subset $Y$ of $X$ and every $n\geq 1$,
$h_{\text{top}}(Y,\Phi^{n})\leq n\ .\ h_{\text{top}}(Y,\Phi)$.
\end{lemma}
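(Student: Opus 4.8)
The plan is to work with a single open cover $\mathcal{A}$ of $X$, prove $h(Y,\Phi^{n};\mathcal{A})\leq n\cdot h(Y,\Phi;\mathcal{A})$, and then pass to the supremum over $\mathcal{A}$. The heart of the matter is to relate the iterated covers of the power system $(X,\Phi^{n})$ to those of $(X,\Phi)$. First I would unwind the definition of $\Phi^{n}$: given a word $w^{\ast}=w_{1}^{\ast}\cdots w_{m}^{\ast}\in I_{\ast}^{1,m}$ corresponding to $w=w_{1}\cdots w_{mn}\in I^{1,mn}$, each power-map $\varphi_{w_{j}^{\ast}}^{(j,n)}$ is by definition the $n$-fold composition $\varphi_{w_{jn}}^{(jn)}\circ\cdots\circ\varphi_{w_{(j-1)n+1}}^{((j-1)n+1)}$. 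Composing these across $j=1,\dots,m$ yields the identity $\varphi_{w^{\ast}}^{1,j}=\varphi_{w}^{1,jn}$, relating the $j$-th iterate of the power system to the $jn$-th iterate of the original system. Consequently, the power-system join is $\mathcal{A}_{w^{\ast}}^{1,m}=\bigvee_{j=0}^{m}\varphi_{w}^{1,-jn}(\mathcal{A})$, which uses only the indices $0,n,2n,\ldots,mn$ out of the full range $0,1,\ldots,mn$ appearing in the original join $\mathcal{A}_{w}^{1,mn}=\bigvee_{k=0}^{mn}\varphi_{w}^{1,-k}(\mathcal{A})$.

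Next, since the original join is taken over a superset of the covers appearing in the power join, I get the refinement $\mathcal{A}_{w}^{1,mn}>\mathcal{A}_{w^{\ast}}^{1,m}$. Restriction to $Y$ preserves refinement, so $\mathcal{A}_{w}^{1,mn}|_{Y}>\mathcal{A}_{w^{\ast}}^{1,m}|_{Y}$, and by the monotonicity of $\mathcal{N}$ under refinement I obtain $\mathcal{N}(\mathcal{A}_{w^{\ast}}^{1,m}|_{Y})\leq\mathcal{N}(\mathcal{A}_{w}^{1,mn}|_{Y})$ for each $w^{\ast}$. The correspondence $w^{\ast}\leftrightarrow w$ is a bijection $I_{\ast}^{1,m}\to I^{1,mn}$, and crucially $\#(I_{\ast}^{1,m})=\#(I^{1,mn})$, so summing over all words and dividing by this common cardinality gives $\frac{1}{\#(I_{\ast}^{1,m})}\sum_{w^{\ast}}\mathcal{N}(\mathcal{A}_{w^{\ast}}^{1,m}|_{Y})\leq\frac{1}{\#(I^{1,mn})}\sum_{w}\mathcal{N}(\mathcal{A}_{w}^{1,mn}|_{Y})$.

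Finally I would apply $\frac{1}{m}\log(\cdot)$ and $\limsup_{m\to\infty}$ to both sides. The left side is by definition $h(Y,\Phi^{n};\mathcal{A})$. Writing the right side as $n\cdot\frac{1}{mn}\log(\cdots)$ and using that the $\limsup$ along the subsequence $N=mn$ is dominated by the $\limsup$ over all $N$, the right side is bounded by $n\cdot h(Y,\Phi;\mathcal{A})\leq n\cdot h_{\text{top}}(Y,\Phi)$. Taking the supremum over all open covers $\mathcal{A}$ then yields $h_{\text{top}}(Y,\Phi^{n})\leq n\cdot h_{\text{top}}(Y,\Phi)$. I expect the main obstacle to be purely bookkeeping: carefully verifying the iterate identity $\varphi_{w^{\ast}}^{1,j}=\varphi_{w}^{1,jn}$ together with the index-set bijection, so that the normalizing constants $\#(I_{\ast}^{1,m})$ and $\#(I^{1,mn})$ match exactly. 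Once that identification is secured, the refinement comparison and the subsequence $\limsup$ estimate are routine.
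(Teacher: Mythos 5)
Your argument is correct, and it is precisely the folklore argument the paper invokes without proof (citing Kolyada--Snoha, Lemma 4.2): the iterate identity $\varphi_{w^{\ast}}^{1,j}=\varphi_{w}^{1,jn}$, the refinement $\mathcal{A}_{w}^{1,mn}>\mathcal{A}_{w^{\ast}}^{1,m}$ restricted to $Y$, the cardinality identity $\#(I_{\ast}^{1,m})=\#(I^{1,mn})$ already recorded in the preliminaries, and the subsequence $\limsup$ bound all check out. Nothing to add.
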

\begin{remark}\label{remark10}
In general, we cannot claim that $h_{\text{top}}(X,\Phi^{n})=n\ .\ h_{\text{top}}(X,\Phi)$(see the comment after Lemma 4.2 in \cite{SKLS}, where $\#(I^{(j)})=1$ for every $j\in\mathbb{N}$). Note that the results in \cite{SKLS}
are about non-autonomous discrete dynamical systems which are a special case of NAIFSs.
\end{remark}
Now, we give some sufficient conditions to have equality in Lemma \ref{lemma3}. An NAIFS $(X,\Phi)$ of continuous maps on a compact metric space $(X,d)$ is said to be
\emph{equicontinuous}, if for every $\epsilon>0$ there exists $\delta>0$ such that the implication $d(x,y)<\delta\Rightarrow d(\varphi_{i}^{(j)}(x),\varphi_{i}^{(j)}(y))<\epsilon$ holds for
every $x,y\in X$, $j\geq 1$ and $i\in I^{(j)}$.

By Lemma \cite[Lemma 4.4]{SKLS} the following results can be followed.
\begin{lemma}\label{theorem4}
Let $(X,\Phi)$ be an equicontinuous NAIFS on a compact metric space $(X,d)$.
Then for any subset $Y$ of $X$ and every $n\geq 1$, $h_{\text{top}}(Y,\Phi^{n})= n\ .\ h_{\text{top}}(Y,\Phi)$.
\end{lemma}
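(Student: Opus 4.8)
The plan is to reduce to the reverse of the inequality in Lemma \ref{lemma3} and then exploit the separated-set description of entropy from Lemma \ref{lemma1}, using equicontinuity to compare the two Bowen metrics at intermediate times. Since Lemma \ref{lemma3} already gives $h_{\text{top}}(Y,\Phi^{n})\leq n\cdot h_{\text{top}}(Y,\Phi)$, it suffices to prove $h_{\text{top}}(Y,\Phi^{n})\geq n\cdot h_{\text{top}}(Y,\Phi)$. First I would fix the identification of words: each $w^{\ast}=w_{1}^{\ast}\ldots w_{m}^{\ast}\in I_{\ast}^{1,m}$ corresponds to $w=w_{1}\ldots w_{mn}\in I^{1,mn}$, and by construction the $j$-th composed map of $(X,\Phi^{n})$ along $w^{\ast}$ is exactly $\varphi_{w}^{1,jn}$. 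Consequently the Bowen metric of $(X,\Phi^{n})$ satisfies $d_{w^{\ast},m}(x,y)=\max_{0\leq j\leq m}d(\varphi_{w}^{1,jn}(x),\varphi_{w}^{1,jn}(y))$, that is, it is the restriction of the $\Phi$-Bowen metric $d_{w,mn}$ to those times that are multiples of $n$.

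The core step is an equicontinuity estimate bridging these two metrics. Given $\epsilon>0$, I would iterate the modulus of equicontinuity $n-1$ times to obtain $\delta>0$ (with $\delta\leq\epsilon$) such that, for every $j\geq 1$ and every admissible choice of indices, any composition of at most $n-1$ of the maps $\varphi_{i}^{(j)}$ carries $\delta$-close points to $\epsilon$-close points. Then, whenever $d_{w^{\ast},m}(x,y)<\delta$, I write an arbitrary time $k\leq mn$ as $k=jn+r$ with $0\leq r<n$ and use $\varphi_{w}^{1,k}=\varphi_{w}^{jn+1,r}\circ\varphi_{w}^{1,jn}$: since $d(\varphi_{w}^{1,jn}(x),\varphi_{w}^{1,jn}(y))<\delta$, applying the $r\leq n-1$ further maps keeps the images $\epsilon$-close, so $d_{w,mn}(x,y)<\epsilon$. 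Taking contrapositives, every $(mn,w,\epsilon;\Phi)$-separated set is $(m,w^{\ast},\delta/2;\Phi^{n})$-separated (the harmless halving of $\delta$ makes the separation strict), which yields $s_{m}(Y;w^{\ast},\delta/2,\Phi^{n})\geq s_{mn}(Y;w,\epsilon,\Phi)$ for each matched pair $w^{\ast}\leftrightarrow w$.

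Summing over all words and using $\#(I_{\ast}^{1,m})=\#(I^{1,mn})$ gives $S_{m}(Y;\delta/2,\Phi^{n})\geq S_{mn}(Y;\epsilon,\Phi)$, hence $\frac{1}{m}\log S_{m}(Y;\delta/2,\Phi^{n})\geq n\cdot\frac{1}{mn}\log S_{mn}(Y;\epsilon,\Phi)$. I would then take $\limsup_{m\to\infty}$. To pass from the limsup along multiples of $n$ on the right-hand side to the full limsup, I would observe that $N\mapsto S_{N}(Y;\epsilon,\Phi)$ is non-decreasing: extending a word only lengthens the Bowen metric, and each word of $I^{1,N-1}$ has exactly $\#(I^{(N)})$ extensions in $I^{1,N}$, so the average cannot decrease. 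Monotonicity together with $mn\leq M+n$ for $m=\lceil M/n\rceil$ then gives $\limsup_{m}\frac{1}{mn}\log S_{mn}(Y;\epsilon,\Phi)=\limsup_{N}\frac{1}{N}\log S_{N}(Y;\epsilon,\Phi)$. Letting $\epsilon\to 0$ (hence $\delta\to 0$) and invoking Lemma \ref{lemma1} on both sides produces $h_{\text{top}}(Y,\Phi^{n})\geq n\cdot h_{\text{top}}(Y,\Phi)$, and the reverse inequality from Lemma \ref{lemma3} finishes the proof.

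I expect the main obstacle to be the equicontinuity estimate of the second paragraph, namely producing a single $\delta$ that simultaneously controls every composition of up to $n-1$ consecutive maps, uniformly over all time positions $j$ and all index choices $i\in I^{(j)}$. This uniform modulus is exactly what equicontinuity supplies and exactly what is unavailable for a general NAIFS, which is why equality can break down without the hypothesis (cf. Remark \ref{remark10}); by contrast, the monotonicity and subsequence bookkeeping in the final step is routine.
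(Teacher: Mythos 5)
Your argument is correct. Note that the paper itself offers no proof of this lemma: it simply asserts that the statement follows from Lemma 4.4 of Kolyada--Snoha \cite{SKLS}, which treats the case $\#(I^{(j)})=1$. What you have written is the natural adaptation of that argument to the NAIFS setting, and the two genuinely NAIFS-specific points are handled properly. First, the bijection $w^{\ast}\leftrightarrow w$ between $I_{\ast}^{1,m}$ and $I^{1,mn}$ together with $\#(I_{\ast}^{1,m})=\#(I^{1,mn})$ lets the per-word inequality $s_{m}(Y;w^{\ast},\delta/2,\Phi^{n})\geq s_{mn}(Y;w,\epsilon,\Phi)$ pass to the averaged quantities $S_{m}$ and $S_{mn}$, which is the step with no analogue in the single-map case. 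Second, the passage from the limsup along multiples of $n$ to the full limsup genuinely needs the monotonicity of $N\mapsto S_{N}(Y;\epsilon,\Phi)$, and your verification of it is sound: extending $w$ to $wi$ only enlarges the Bowen metric, and each $w\in I^{1,N-1}$ has exactly $\#(I^{(N)})$ extensions, so the average cannot decrease. The equicontinuity step (an iterated modulus $\delta\leq\epsilon$ controlling all compositions of at most $n-1$ consecutive maps, uniformly in the time position and index choice) and the halving of $\delta$ to restore strict separation are exactly as in the classical argument, and your closing remark correctly identifies equicontinuity as the hypothesis that fails for general NAIFSs (cf.\ Remark \ref{remark10}). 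In short, the proposal supplies a complete and correct proof of a statement the paper delegates to a citation.
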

Let us take an NAIFS $(X,\Phi)$ in which $X$ is a compact metric space and $\Phi$ consists of a sequence $\{\Phi^{(j)}\}_{j\geq 1}$ of
collections of maps, where $\Phi^{(j)}=\{\varphi_{i}^{(j)}:X\to X\}_{i\in I^{(j)}} $ and $ I^{(j)} $ is a
non-empty finite index set for all $j\geq 1$. For each $k \geq 1$ we will denote by $(X,\Phi_{k})$ the NAIFS composed of the sequence $\{\Phi^{(j)}\}_{j\geq k}$.

Now, we give the following lemma that will be used in the next section.
\begin{lemma}\label{theorem1}
Let $(X, \Phi)$ be an NAIFS of continuous maps on a compact topological space $X$.
Then $h(X,\Phi_{i};\mathcal{A})\leq h(X,\Phi_{j};\mathcal{A})$ for every $1\leq i\leq j<\infty$ and every
open cover $\mathcal{A}$ of $X$.
In particular, $h_{\text{top}}(X,\Phi_{i})\leq h_{\text{top}}(X,\Phi_{j})$.
\end{lemma}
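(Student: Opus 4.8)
The plan is to reduce everything to a single shift and then iterate. Since $h(X,\Phi_i;\mathcal{A})\le h(X,\Phi_j;\mathcal{A})$ for $i\le j$ follows by composing the one-step inequalities, it suffices to prove $h(X,\Phi_k;\mathcal{A})\le h(X,\Phi_{k+1};\mathcal{A})$ for every $k\ge 1$ and every open cover $\mathcal{A}$; the final (``in particular'') assertion is then immediate upon taking the supremum over all open covers $\mathcal{A}$ on both sides, since $h(X,\Phi_i;\mathcal{A})\le h(X,\Phi_j;\mathcal{A})\le h_{\text{top}}(X,\Phi_j)$ for each $\mathcal{A}$.

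The heart of the argument is an algebraic identity relating the iterated covers of the two systems. Fix $n\ge 1$ and a word $w=w_kw_{k+1}\ldots w_{k+n-1}\in I^{k,n}$. Writing $g:=\varphi_{w_k}^{(k)}$ for its first map and $w':=w_{k+1}\ldots w_{k+n-1}\in I^{k+1,n-1}$ for the word with its first symbol deleted, the composition rule gives $\varphi_w^{k,j}=\varphi_{w'}^{k+1,j-1}\circ g$ for $1\le j\le n$ (and $\varphi_w^{k,0}=id_X$). Feeding this into the definition of $\mathcal{A}_w^{k,n}$ and using the distributivity (\ref{eq5}) of preimages over joins, I obtain
\[
\mathcal{A}_w^{k,n}=\mathcal{A}\vee g^{-1}\big(\mathcal{A}_{w'}^{k+1,n-1}\big).
\]
By the submultiplicativity (\ref{eq7}) together with the monotonicity (\ref{eq6}) of $\mathcal{N}(\cdot)$ under preimages, this yields the pointwise (in $w$) bound $\mathcal{N}(\mathcal{A}_w^{k,n})\le\mathcal{N}(\mathcal{A})\cdot\mathcal{N}(\mathcal{A}_{w'}^{k+1,n-1})$.

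Next I sum this over $w\in I^{k,n}$. The decomposition $I^{k,n}=I^{(k)}\times I^{k+1,n-1}$ means that symbol-deletion is a $\#(I^{(k)})$-to-one map onto $I^{k+1,n-1}$, so that $\sum_{w\in I^{k,n}}\mathcal{N}(\mathcal{A}_{w'}^{k+1,n-1})=\#(I^{(k)})\sum_{w'\in I^{k+1,n-1}}\mathcal{N}(\mathcal{A}_{w'}^{k+1,n-1})$. The same factor $\#(I^{(k)})$ appears in $\#(I^{k,n})=\#(I^{(k)})\cdot\#(I^{k+1,n-1})$, hence cancels in the normalized averages. Writing $a_n^{(k)}:=\frac{1}{\#(I^{k,n})}\sum_{w\in I^{k,n}}\mathcal{N}(\mathcal{A}_w^{k,n})$, I arrive at $a_n^{(k)}\le\mathcal{N}(\mathcal{A})\cdot a_{n-1}^{(k+1)}$.

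Finally I take $\tfrac1n\log(\cdot)$ and pass to the limit. Since $\tfrac1n\log\mathcal{N}(\mathcal{A})\to 0$ for a finite cover (it suffices to consider finite covers, as noted after the definition of $h_{\text{top}}$) and $\tfrac{n-1}{n}\to 1$, the inequality $\tfrac1n\log a_n^{(k)}\le\tfrac1n\log\mathcal{N}(\mathcal{A})+\tfrac{n-1}{n}\cdot\tfrac{1}{n-1}\log a_{n-1}^{(k+1)}$ gives, upon taking $\limsup_{n\to\infty}$, exactly $h(X,\Phi_k;\mathcal{A})\le h(X,\Phi_{k+1};\mathcal{A})$. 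I expect the only delicate point to be the bookkeeping: one must verify that the combinatorial factor $\#(I^{(k)})$ coming from the fibers of the symbol-deletion map is precisely the factor dropped from the normalizing cardinality, for it is this exact cancellation---peculiar to the word-averaged definition of entropy used here---that makes the clean one-step monotonicity go through.
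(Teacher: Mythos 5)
Your proposal is correct and follows essentially the same route as the paper: the same one-step reduction, the same identity $\mathcal{A}_{w}^{k,n}=\mathcal{A}\vee\big(\varphi_{w_{k}}^{(k)}\big)^{-1}\big(\mathcal{A}_{w'}^{k+1,n-1}\big)$ via (\ref{eq5}), the same bound from (\ref{eq7}) and (\ref{eq6}), and the same cancellation of the factor $\#(I^{(k)})$ between the fibers of symbol-deletion and the normalizing cardinality, followed by $\tfrac1n\log\mathcal{N}(\mathcal{A})\to 0$. The only cosmetic difference is that you flag the finiteness of $\mathcal{N}(\mathcal{A})$ explicitly (which is automatic by compactness), whereas the paper absorbs this into the displayed computation.
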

\begin{proof}
It is enough to show that $h(X,\Phi_{i};\mathcal{A})\leq h(X,\Phi_{i+1};\mathcal{A})$, for
every $1\leq i<\infty$ and every open cover $\mathcal{A}$ of $X$.
Let $i\geq 1$ and $\mathcal{A}$ be an open cover of $X$. For $w=w_{i}w_{i+1}\ldots w_{i+n-1}\in I^{i,n}$ put $w^{\prime}:=w|^{1}=w_{i+1}\ldots w_{i+n-1}\in I^{i+1,n-1}$. Now, by relation (\ref{eq5}), we have
\begin{eqnarray*}
\mathcal{A}_{w}^{i,n}
&=& \mathcal{A}\vee\varphi_{w}^{i,-1}(\mathcal{A})\vee\varphi_{w}^{i,-2}(\mathcal{A})\vee\cdots\vee\varphi_{w}^{i,-n}(\mathcal{A})\\
&=& \mathcal{A}\vee\big(\varphi_{w_{i}}^{(i)}\big)^{-1}\big(\mathcal{A}\vee
\varphi_{w^{\prime}}^{i+1,-1}(\mathcal{A})\vee\varphi_{w^{\prime}}^{i+1,-2}(\mathcal{A})\vee\cdots\vee\varphi_{w^{\prime}}^{i+1,-(n-1)}(\mathcal{A})\big)\\
&=& \mathcal{A}\vee\big(\varphi_{w_{i}}^{(i)}\big)^{-1}\big(\mathcal{A}_{w^{\prime}}^{i+1,n-1}\big).
\end{eqnarray*}
Using relations (\ref{eq7}) and (\ref{eq6}) we get
\begin{eqnarray*}
h(X,\Phi_{i};\mathcal{A})
&=& \limsup_{n\to\infty}\frac{1}{n}\log\Big(\dfrac{1}{\#(I^{i,n})}\sum_{w\in I^{i,n}}\mathcal{N}(\mathcal{A}_{w}^{i,n})\Big)\\
&\leq& \limsup_{n\to\infty}\frac{1}{n}\log\Big(\dfrac{1}{\#(I^{i,n})}\sum_{w\in I^{i,n}}\mathcal{N}(\mathcal{A})\ .\ \mathcal{N}(\mathcal{A}_{w^{\prime}}^{i+1,n-1})\Big)\\
&=& \limsup_{n\to\infty}\frac{1}{n}\log\Big(\dfrac{\#(I^{(i)})}{\#(I^{i,n})}\sum_{w^{\prime}\in I^{i+1,n-1}}\mathcal{N}(\mathcal{A})\ .\ \mathcal{N}(\mathcal{A}_{w^{\prime}}^{i+1,n-1})\Big)\\
&=& \limsup_{n\to\infty}\frac{1}{n}\log\Big(\dfrac{\mathcal{N}(\mathcal{A})}{\#(I^{i+1,n-1})}\sum_{w^{\prime}\in I^{i+1,n-1}}\mathcal{N}(\mathcal{A}_{w^{\prime}}^{i+1,n-1})\Big)\\
&=& \limsup_{n\to\infty}\frac{1}{n}\log\mathcal{N}(\mathcal{A})+\limsup_{n\to\infty}\frac{1}{n}\log\Big(\dfrac{1}{\#(I^{i+1,n-1})}\sum_{w^{\prime}\in I^{i+1,n-1}}\mathcal{N}(\mathcal{A}_{w^{\prime}}^{i+1,n-1})\Big)\\
&=&\limsup_{n\to\infty}\frac{1}{n}\log\Big(\dfrac{1}{\#(I^{i+1,n})}\sum_{w\in I^{i+1,n}}\mathcal{N}(\mathcal{A}_{w}^{i+1,n})\Big)\\
&=& h(X,\Phi_{i+1};\mathcal{A}).
\end{eqnarray*}
Now, by taking supremum over all open covers $\mathcal{A}$ of $X$ we have
$h_{\text{top}}(X,\Phi_{i})\leq h_{\text{top}}(X,\Phi_{i+1})$ which completes the proof.
\end{proof}
In general, without more assumptions, we cannot claim that $h_{\text{top}}(X,\Phi)=h_{\text{top}}(X,\Phi_{i})$
for all $i\geq 1$. Nevertheless, in Corollary \ref{corollary1} we will give a sufficient condition that guarantees the equality
$h_{\text{top}}(X,\Phi)=h_{\text{top}}(X,\Phi_{i})$ for all $i\geq 1$.
\begin{remark}\label{remark0}
Because, in general, the inequality
$\mathcal{N}\Big(\big(\varphi_{w_{i}}^{(i)}\big)^{-1}(\mathcal{A})|_{Y}\Big)
\leq\mathcal{N}(\mathcal{A}|_{Y})$ is not true, the proof of Lemma \ref{theorem1} cannot be modified
to prove an analogue of the theorem for the topological entropy on the subsets $Y$ of $X$. Hence, it is not very surprising that such an analogue does not hold (see \cite[Fig.2 and comments]{SKLS}, where $\#(I^{(j)})=1$
for every $j\in\mathbb{N}$).
\end{remark}
\subsection{Asymptotical topological entropy and topologically chaotic NAIFSs}
As an autonomous dynamical system $(X,f)$ is usually called topologically chaotic if $h_{\text{top}}(f)>0$, one could consider also an NAIFS $(X,\Phi)$ with $h_{\text{top}}(X,\Phi)>0$ to be topologically chaotic. But, we
give another definition which is an extension of the definition of topologically chaotic that given by Kolyada and Snoha for non-autonomous discrete dynamical systems \cite{SKLS}.

Let $(X,\Phi)$ be an NAIFS of continuous maps on a compact topological space $X$ and
$\mathcal{A}$ be an open cover of $X$, then by Lemma \ref{theorem1} the limit
\begin{equation*}
 h^{\ast}(X,\Phi;\mathcal{A}):=\lim_{n\to\infty}h(X,\Phi_{n};\mathcal{A})=\lim_{n\to\infty}\limsup_{k\to\infty}\frac{1}{k}\log\Bigg(\dfrac{1}{\#(I^{n,k})}\sum_{w\in I^{n,k}}\mathcal{N}(\mathcal{A}_{w}^{n,k})\Bigg)
\end{equation*}
exists. The quantity $h^{\ast}(X,\Phi;\mathcal{A})$ is said to be the \emph{asymptotical topological entropy of
the} NAIFS $(X,\Phi)$ \emph{on the cover} $\mathcal{A}$. Put
\begin{equation*}
h^{\ast}(X,\Phi):=\sup_{\mathcal{A}}h^{\ast}(X,\Phi;\mathcal{A})
\end{equation*}
where the supremum is taken over all open covers $\mathcal{A}$ of $X$. By the definition and
Lemma \ref{theorem1} it is easy to see that
\begin{eqnarray*}
h^{\ast}(X,\Phi)
&=& \sup_{\mathcal{A}}h^{\ast}(X,\Phi;\mathcal{A})=\sup_{\mathcal{A}}\lim_{n\to\infty}h(X,\Phi_{n};\mathcal{A})=\sup_{\mathcal{A}}\sup_{n}h(X,\Phi_{n};\mathcal{A})\\
&=& \sup_{(\mathcal{A},n)}h(X,\Phi_{n};\mathcal{A})=\sup_{n}\sup_{\mathcal{A}}h(X,\Phi_{n};\mathcal{A})=\lim_{n\to\infty}\sup_{\mathcal{A}}h(X,\Phi_{n};\mathcal{A})\\
&=& \lim_{n\to\infty}h_{\text{top}}(X,\Phi_{n}).
\end{eqnarray*}
If $X$ is a compact metric space, then by the definition of topological entropy via separated and
spanning sets we have
\begin{eqnarray*}
h^{\ast}(X,\Phi)
&=& \lim_{n\to\infty}\lim_{\epsilon\to 0}\limsup_{k\to\infty}\frac{1}{k}\log S_{k}(\epsilon,\Phi_{n})=\lim_{\epsilon\to 0}\lim_{n\to\infty}\limsup_{k\to\infty}\frac{1}{k}\log S_{k}(\epsilon,\Phi_{n})\\
&=&\lim_{n\to\infty}\lim_{\epsilon\to 0}\limsup_{k\to\infty}\frac{1}{k}\log R_{k}(\epsilon,\Phi_{n})=\lim_{\epsilon\to 0}\lim_{n\to\infty}\limsup_{k\to\infty}\frac{1}{k}\log R_{k}(\epsilon,\Phi_{n}),
\end{eqnarray*}
where
\begin{equation*}
 S_{k}(\epsilon,\Phi_{n})=\dfrac{1}{\#(I^{n,k})}
 \sum_{w\in I^{n,k}}s_{k}(w,\epsilon,\Phi_{n})\ \ \ \text{and}\ \ \ R_{k}(\epsilon,\Phi_{n})=\dfrac{1}{\#(I^{n,k})}\sum_{w\in I^{n,k}}r_{k}(w,\epsilon,\Phi_{n}).
\end{equation*}
The quantity $h^{\ast}(X,\Phi)$ is said to be \emph{the asymptotical topological entropy of} NAIFS $(X,\Phi)$.
\begin{definition}
An NAIFS $(X,\Phi)$ of continuous maps on a compact topological space $X$ is said to be
\emph{topologically chaotic} if it has positive asymptotical topological entropy, i.e. $h^{\ast}(X,\Phi)>0$.
\end{definition}
\begin{remark}
By Remark \ref{remark0}, since for a proper subset $Y$ of $X$ ($Y\subsetneqq X$) we may have
$h_{\text{top}}(Y,\Phi_{i})\geq h_{\text{top}}(Y,\Phi_{j})$ for some $j>i$, there is a problem with the
extension of the concept of asymptotical topological entropy to a proper subset $Y$ of $X$. But, we can
define $h^{\ast}(Y,\Phi):=\limsup_{n\to\infty}h_{\text{top}}(Y,\Phi_{n})$ for proper subsets $Y$ of $X$.
\end{remark}
Many results that hold for the topological entropy of NAIFSs can be carried to asymptotical topological entropy of NAIFSs. Hence, it is not difficult  to see that Proposition \ref{proposition1}, Lemmas \ref{lemma3},
 \ref{theorem4} and \ref{theorem1} have analogues versions for asymptotical topological
entropy of NAIFSs by replacing $h_{\text{top}}$ by $h^{\ast}$.
\subsection{Entropy of NAIFSs of monotone interval maps or circle maps}
Sometimes in computing the topological entropy of a dynamical system, one may be very interested in
whether it is positive or zero rather than its exact value. Also, computing the exact value may be impossible.
In the theory of autonomous dynamical systems, a homeomorphism on the interval or the circle has zero
topological entropy (see, e.g., \cite{AKM,PW}). Also, in \cite{SKLS} in the theory of non-autonomous discrete
dynamical systems, Kolyada and Snoha showed that any non-autonomous discrete dynamical systems of
continuous (not necessarily strictly) monotone maps on the interval or the circle, have zero topological
entropy. In the following theorem, we extend these results to NAIFSs on the interval and the circle.

We consider the unit circle $S^{1}$ as the quotient space of the real line by the group of translations by
integers ($S^{1}=\mathbb{R}/\mathbb{Z}$). Let $q:\mathbb{R}\to S^{1}$ be the quotient map. In the unit circle $S^{1}$,
we consider the metric (denoted by $\rho$) and the orientation induced from the metric and orientation
of the real line via $q$ (hence the distance between any two points is at most $\frac{1}{2}$). Also, we
denote by $I$ the unit interval $[0,1]$.
Note that a homeomorphism of $I$ or $S^{1}$ is either strictly increasing (orientation preserving) or strictly decreasing (orientation reversing). The desired result can be followed from the following theorem. In it,
when we speak about an NAIFS of monotone maps we do not assume that the type of monotonicity is the
same for all of them.
\begin{theorem}
Let $(X,\Phi)$ be an NAIFS of continuous monotone maps in which $X$ is $I$ or $S^{1}$. Then, the topological entropy $h_{\text{top}}(X,\Phi)$ is zero. Thus, $h^{\ast}(X,\Phi)=0$.
\end{theorem}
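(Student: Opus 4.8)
The plan is to prove that $h(X,\Phi;\mathcal{A})=0$ for every open cover $\mathcal{A}$ of $X$; taking the supremum then gives $h_{\text{top}}(X,\Phi)=0$, and since each shifted system $(X,\Phi_{n})$ is again an NAIFS of continuous monotone maps on $X$, the same argument yields $h_{\text{top}}(X,\Phi_{n})=0$ for all $n$, whence $h^{\ast}(X,\Phi)=\lim_{n\to\infty}h_{\text{top}}(X,\Phi_{n})=0$. By compactness it suffices to treat finite open covers, and using (\ref{n55}) — together with the fact that any finite open cover of $I$ (of $S^{1}$) is refined by a finite cover by open intervals (open arcs) — it is enough to prove $h(X,\Phi;\mathcal{A})=0$ when $\mathcal{A}$ is a finite cover of $X$ by open intervals (arcs), say with $k:=\#(\mathcal{A})$ elements.

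The decisive geometric observation is that the preimage of an interval (arc) under a continuous monotone self-map of $I$ (of $S^{1}$) is again an interval (arc), possibly empty or all of $X$: the preimage of an open set is open by continuity, and the preimage of a convex set under a monotone map is convex. Moreover, for any word $w$ and any $j$ the map $\varphi_{w}^{1,j}$ is a composition of monotone maps and hence itself monotone, so each cover $\varphi_{w}^{1,-j}(\mathcal{A})=(\varphi_{w}^{1,j})^{-1}(\mathcal{A})$ consists of at most $k$ intervals (arcs). Consequently every element of the refinement $\mathcal{A}_{w}^{1,n}=\bigvee_{j=0}^{n}\varphi_{w}^{1,-j}(\mathcal{A})$ is an intersection of intervals, hence again an interval (arc).

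Next I would bound $\mathcal{N}(\mathcal{A}_{w}^{1,n})$ by a quantity that is linear in $n$ and, crucially, uniform in $w$. The collection of all the $k(n+1)$ intervals making up the $n+1$ covers $\varphi_{w}^{1,-j}(\mathcal{A})$, $0\leq j\leq n$, has at most $2k(n+1)$ endpoints; these endpoints cut $X$ into at most $2k(n+1)+1$ atoms. Since each of the $n+1$ covers covers $X$, every atom is contained in some member of $\mathcal{A}_{w}^{1,n}$, so choosing one such member per atom yields a subcover and gives $\mathcal{N}(\mathcal{A}_{w}^{1,n})\leq 2k(n+1)+1$. In the circle case one cuts $S^{1}$ at a single auxiliary point to run the same counting, which changes the constant but not the linear growth.

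Finally, inserting this uniform estimate into the definition gives
\begin{equation*}
h(X,\Phi;\mathcal{A})=\limsup_{n\to\infty}\frac{1}{n}\log\Big(\frac{1}{\#(I^{1,n})}\sum_{w\in I^{1,n}}\mathcal{N}(\mathcal{A}_{w}^{1,n})\Big)\leq\limsup_{n\to\infty}\frac{1}{n}\log\big(2k(n+1)+1\big)=0,
\end{equation*}
since the averaging over $w$ is harmless once the summand is bounded independently of $w$. Together with $h(X,\Phi;\mathcal{A})\geq 0$ this forces $h(X,\Phi;\mathcal{A})=0$, and the reductions above finish the proof. I expect the only genuine difficulty to be the circle case: one must pin down the right notion of a monotone self-map of $S^{1}$ so that preimages of proper arcs remain single arcs (equivalently, that the relevant compositions have degree $\pm 1$), and handle the endpoint bookkeeping when an arc wraps around; the interval estimate is then purely combinatorial and transfers with only cosmetic changes.
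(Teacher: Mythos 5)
Your proposal is correct, but it runs along a genuinely different track from the paper's proof. The paper works with separated sets via Lemma \ref{lemma1}: fixing $w\in I^{1,n}$ and ordering a $(w,n,\epsilon;\Phi)$-separated set $x_{1}<\cdots<x_{k}$, it uses monotonicity of the compositions $\varphi_{w}^{1,j}$ only to preserve (or reverse) the order of points, so that for each $j$ the consecutive gaps $|\varphi_{w}^{1,j}(x_{i})-\varphi_{w}^{1,j}(x_{i+1})|$ sum to at most $1$ and hence at most $[1/\epsilon]$ of them exceed $\epsilon$; this yields $s_{n}(w,\epsilon,\Phi)\leq 1+(n+1)[1/\epsilon]$ uniformly in $w$, and the same averaging step you use finishes the argument. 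Your route instead stays with the open-cover definition and counts endpoints of the intervals in $\bigvee_{j}\varphi_{w}^{1,-j}(\mathcal{A})$, resting on the fact that preimages of intervals under monotone maps are intervals. Both give a bound linear in $n$ and uniform in $w$, which is all that matters. The paper's gap-counting has the advantage of never needing to analyze preimages, which makes the circle case essentially identical to the interval case (it only needs that monotone circle maps respect the cyclic order of $k$ points, so the consecutive arc-lengths still sum to $1$); your endpoint-counting is arguably more self-contained (no appeal to the cover/separated-set equivalence) but, as you correctly flag, pushes the delicacy into the claim that preimages of arcs under the relevant monotone circle maps are single arcs, i.e.\ that the maps have degree in $\{-1,0,1\}$ — a hypothesis that is implicit in both proofs, since a degree-two "monotone-lift" map such as the doubling map has positive entropy. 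Two cosmetic points: your atom count should be $4k(n+1)+1$ rather than $2k(n+1)+1$ (each of the $2k(n+1)$ endpoints contributes a singleton atom in addition to the open intervals between consecutive endpoints, since an open element of the cover need not contain the closed atom), and one should note that each open atom, meeting no endpoint, is contained in a single element of each of the $n+1$ covers and hence of their join; neither affects the linear growth or the conclusion.
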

\begin{proof}
First, we begin the proof for the interval case. Fix $w\in I^{1,n}$. Let $E:=\{x_{1},x_{2},\ldots,x_{k}\}$ be a subset of $I$ with
$x_{1}<x_{2}<\cdots<x_{k}$. Since the maps $\varphi_{w_{1}},\varphi_{w_{2}},\ldots,\varphi_{w_{n}}$ are monotone, for every $0\leq j\leq n$ either
$\varphi_{w}^{1,j}(x_{1})\leq\varphi_{w}^{1,j}(x_{2})\leq\cdots\leq\varphi_{w}^{1,j}(x_{k})\ \ \text{or}\ \ \varphi_{w}^{1,j}(x_{1})\geq\varphi_{w}^{1,j}(x_{2})\geq\cdots\geq\varphi_{w}^{1,j}(x_{k}).$
This implies that the set $E$ is $(w,n,\epsilon;\Phi)$-separated if and only if for every $1\leq i\leq k-1$ the set $\{x_{i},x_{i+1}\}$ is $(w,n,\epsilon;\Phi)$-separated. Since the length of the interval $I$ is $1$, for every $0\leq j\leq n$ at most $[1/\epsilon]$ distances from $|\varphi_{w}^{1,j}(x_{1})-\varphi_{w}^{1,j}(x_{2})|$,
$|\varphi_{w}^{1,j}(x_{2})-\varphi_{w}^{1,j}(x_{3})|$,
$\ldots,|\varphi_{w}^{1,j}(x_{k-1})-\varphi_{w}^{1,j}(x_{k})|$ are longer than $\epsilon$, where $[1/\epsilon]$ is the integer part of $1/\epsilon$. Hence, at most $(n+1)[1/\epsilon]$ sets of the form $\{x_{i},x_{i+1}\}$, $1\leq i\leq k-1$ are $(w,n,\epsilon;\Phi)$-separated. So if $E$ is $(w,n,\epsilon;\Phi)$-separated then $k-1\leq(n+1)[1/\epsilon]$. Consequently, $s_{n}(w,\epsilon,\Phi)\leq 1+(n+1)[1/\epsilon]$. Hence, by the definition of topological entropy, it follows that
\begin{eqnarray*}
h_{\text{top}}(I, \Phi)
&=& \lim_{\epsilon\to 0}\limsup_{n\to\infty}\frac{1}{n}\log\Bigg(\dfrac{1}{\#(I^{1,n})}
\sum_{w\in I^{1,n}}s_{n}(w,\epsilon,\Phi)\Bigg)\\
&\leq & \lim_{\epsilon\to 0}\limsup_{n\to\infty}\frac{1}{n}\log\Bigg(\dfrac{1}{\#(I^{1,n})}
\sum_{w\in I^{1,n}}\big(1+(n+1)[1/\epsilon]\big)\Bigg)\\
&=& \lim_{\epsilon\to 0}\limsup_{n\to\infty}\frac{1}{n}\log\big(1+(n+1)[1/\epsilon]\big)=0.
\end{eqnarray*}
Now, let $X=S^{1}$ and $w\in I^{1,n}$. Let $E:=\{x_{1},x_{2},\ldots,x_{k}\}$ be a maximal
$(w,n,\epsilon;\Phi)$-separated set in $S^{1}$ with $x_{1}<x_{2}<\cdots<x_{k}$, i.e. $s_{n}(w,\epsilon,\Phi)=k$. Then  the sets $\{x_{i},x_{i+1}\}$, $1\leq i\leq k-1$ and $\{x_{k},x_{1}\}$ are $(w,n,\epsilon;\Phi)$-separated. Since for every $0\leq j\leq n$ the sum of distances
$$\sum_{i=1}^{k-1}\rho(\varphi_{w}^{1,j}(x_{i}),\varphi_{w}^{1,j}(x_{i+1}))+\rho(\varphi_{w}^{1,j}(x_{k}),\varphi_{w}^{1,j}(x_{1}))$$
equals to the lenght of the circle $=1$, at most $[1/\epsilon]$ of them are longer than $\epsilon$. Hence, at most $(n+1)[1/\epsilon]$ sets of the form $\{x_{i},x_{i+1}\}$, $1\leq i\leq k-1$ or $\{x_{k},x_{1}\}$ are $(w,n,\epsilon;\Phi)$-separated. Thus $s_{n}(w,\epsilon,\Phi)=k\leq (n+1)[1/\epsilon]$, since all of these sets are $(w,n,\epsilon;\Phi)$-separated. Hence, by the definition of topological entropy, it follows that
\begin{eqnarray*}
h_{\text{top}}(S^{1}, \Phi)
&=& \lim_{\epsilon\to 0}\limsup_{n\to\infty}\frac{1}{n}\log\Bigg(\dfrac{1}{\#(I^{1,n})}
\sum_{w\in I^{1,n}}s_{n}(w,\epsilon,\Phi)\Bigg)\\
&\leq & \lim_{\epsilon\to 0}\limsup_{n\to\infty}\frac{1}{n}\log\Bigg(\dfrac{1}{\#(I^{1,n})}
\sum_{w\in I^{1,n}}(n+1)[1/\epsilon]\Bigg)\\
&=& \lim_{\epsilon\to 0}\limsup_{n\to\infty}\frac{1}{n}\log\big((n+1)[1/\epsilon]\big)=0.
\end{eqnarray*}
In a similar way, for every $n\geq 2$ one can conclude that $h_{\text{top}}(I, \Phi_{n})=0=h_{\text{top}}(S^{1}, \Phi_{n})$. Hence, $h^{\ast}(I, \Phi)=0=h^{\ast}(S^{1}, \Phi)$ which completes the proof.
\end{proof}
\subsection{Topological entropy on the set of nonwandering points}
If $(X,\varphi)$ is an autonomous dynamical system in which $\varphi$ is a continuous self-map of a compact topological space $X$, then by \cite{RB2}, the topological entropy of $\varphi$ and of $\varphi|_{\Omega(f)}$
are equal. Where, $\Omega(\varphi)$ is the set of nonwandering points of $\varphi$. A point $x\in X$ is said to
be a nonwandering point of $\varphi$ if for every non-empty open neighborhood $U_{x}$ of $x$ in $X$, there
exists a positive integer $n$ such that $\varphi^{n}(U_{x})\cap U_{x}\neq\emptyset$.
Also, in the context of non-autonomous discrete dynamical systems, Kolyada and Snoha \cite{SKLS} showed
that for every sequence $\varphi_{1,\infty}=\{\varphi_{i}\}_{i=1}^{\infty}$ of equicontinuous self-maps
of a compact metric space $X$, the topological entropy of non-autonomous discrete dynamical
system $(X,\varphi_{1,\infty})$ is equal to the topological entropy of its restriction to the set of
nonwandering points, i.e.
$h_{\text{top}}(\varphi_{1,\infty})=h_{\text{top}}(\varphi_{1,\infty}|_{\Omega(\varphi_{1,\infty})})$.
Where, $\Omega(\varphi_{1,\infty})$ is the set of nonwandering points of sequence $\varphi_{1,\infty}$.
Additionally, Eberlein \cite{EE} asserted that the topological entropy of an (abelian) finitely generated semigroup action is equal to the topological entropy of its restriction to its nonwandering set.
In the following theorem, we want to find a analogous result for NAIFSs.
\begin{definition}\label{nonwan}
Let $(X,\Phi)$ be an NAIFS of continuous maps on a compact topological space $X$. A point $x\in X$ is said to
be \emph{nonwandering} for $\Phi$ if for every open neighbourhood $U_{x}$ of $x$ there is a finite word
$w\in I^{m,n}$ for some $m,n\geq 1$, such that $\varphi_{w}^{m,n}(U_{x})\cap U_{x}\neq\emptyset$.
The set of all nonwandering points of $\Phi$ is called the \emph{nonwandering set} of $\Phi$ and denoted by $\Omega(\Phi)$. It is easy to see that $\Omega(\Phi)$ is a closed subset of $X$.
\end{definition}
\begin{remark}
The following two facts hold:
\begin{itemize}
  \item Definition \ref{nonwan} implies that an open subset $U\subseteq X$ is \emph{wandering} for $\Phi$ if $\varphi_{w}^{m,n}(U)\cap U=\emptyset$ for every finite word $w\in I^{m,n}$
and every $m,n\geq 1$. Also a point $x\in X$ is \emph{wandering} for $\Phi$ if it belongs to some wandering set $U$. Hence, $x$ is wandering if and only if it is not nonwandering.
  \item In an NAIFS $(X,\Phi)$, if $\#(I^{(j)})=1$ and $\Phi^{(j)}=\{\varphi_{1}^{(j)}\}$ for every $j\geq 1$,
then $(X,\Phi)$ is a non-autonomous discrete dynamical system and Definition \ref{nonwan} coincides with the usual definition of nonwandering points for non-autonomous discrete dynamical systems. Additionally, if $\varphi_{1}^{(j)}=\varphi$ for every $j\geq 1$, then $(X,\Phi)$ is an autonomous dynamical system and Definition \ref{nonwan} coincides with the usual definition of nonwandering points for autonomous dynamical systems.
\end{itemize}
\end{remark}
\begin{theorem}\label{theorem10}
Let $(X,\Phi)$ be an equicontinuous NAIFS of a compact metric space $(X,d)$.
Then $h_{\text{top}}(X,\Phi)=h_{\text{top}}(\Omega(\Phi),\Phi|_{\Omega(\Phi)})$.
\end{theorem}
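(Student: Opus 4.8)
The plan is to prove the two inequalities separately. The inequality $h_{\text{top}}(\Omega(\Phi),\Phi|_{\Omega(\Phi)})\le h_{\text{top}}(X,\Phi)$ is immediate: since $\Omega(\Phi)\subseteq X$, every subcover of $\mathcal{A}_{w}^{1,n}$ restricts to a cover of $\Omega(\Phi)$, so $\mathcal{N}(\mathcal{A}_{w}^{1,n}|_{\Omega(\Phi)})\le\mathcal{N}(\mathcal{A}_{w}^{1,n})$ for every open cover $\mathcal{A}$ and every word $w$, and the definition of topological entropy on subsets gives the claim. The whole difficulty lies in the reverse inequality $h_{\text{top}}(X,\Phi)\le h_{\text{top}}(\Omega(\Phi),\Phi)$, which asserts that the wandering part carries no extra entropy. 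Writing $W:=X\setminus\Omega(\Phi)$ for the open set of wandering points (the case $W=\emptyset$ being trivial) and applying Proposition \ref{proposition1} to the decomposition $X=\Omega(\Phi)\cup W$, it will suffice to show $h_{\text{top}}(W,\Phi)\le h_{\text{top}}(\Omega(\Phi),\Phi)$.

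The combinatorial engine I would isolate first is the following \emph{visit-once} property: if $U\subseteq X$ is a wandering open set, then for every infinite word $w\in I^{1,\infty}$ and every $x\in X$ the $w$-orbit $\{\varphi_{w}^{1,k}(x)\}_{k\ge 0}$ meets $U$ at most once. Indeed, if $\varphi_{w}^{1,k_{1}}(x),\varphi_{w}^{1,k_{2}}(x)\in U$ with $k_{1}<k_{2}$, then for the subword $w':=w_{k_{1}+1}\ldots w_{k_{2}}\in I^{k_{1}+1,k_{2}-k_{1}}$ we would get $\varphi_{w'}^{k_{1}+1,k_{2}-k_{1}}(U)\cap U\ne\emptyset$, contradicting Definition \ref{nonwan}. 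Next I would fix $\delta>0$ and set $Y_{\delta}:=\{x\in X:d(x,\Omega(\Phi))\ge\delta\}$, a compact subset of $W$; a compactness/Lebesgue-number argument yields finitely many wandering open sets $U_{1},\ldots,U_{r}$ covering $Y_{\delta}$, and then the visit-once property forces each $w$-orbit to lie in $Y_{\delta}$ for at most $r$ indices $k$, a bound independent of the time horizon.

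I would then estimate the separated numbers of Lemma \ref{lemma1}. For a maximal $(n,w,\epsilon;\Phi)$-separated set $E\subseteq W$, assign to each $x\in E$ its \emph{schedule} $T(x):=\{0\le k\le n:\varphi_{w}^{1,k}(x)\in Y_{\delta}\}$, of size at most $r$; the number of possible schedules is at most $\sum_{j=0}^{r}\binom{n+1}{j}$, which is polynomial in $n$ and hence disappears after applying $\tfrac{1}{n}\log(\cdot)$. Grouping $E$ by its schedule reduces the estimate to points whose orbits, outside at most $r$ prescribed instants, stay in the tube $\Omega_{\delta}:=\{x:d(x,\Omega(\Phi))<\delta\}$. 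The remaining task is to compare the separation of these near-$\Omega(\Phi)$ orbits with separated sets of $\Omega(\Phi)$ itself: invoking the equicontinuity of $\Phi$ over bounded time-windows together with the scaling identity $h_{\text{top}}(Y,\Phi^{N})=N\,h_{\text{top}}(Y,\Phi)$ of Lemma \ref{theorem4}, one passes from the $\delta$-tube to $\Omega(\Phi)$ and lets $\delta\to 0$, obtaining $h_{\text{top}}(W,\Phi)\le h_{\text{top}}(\Omega(\Phi),\Phi)$.

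The hard part will be precisely this last reduction from the tube $\Omega_{\delta}$ to the nonwandering set $\Omega(\Phi)$. The visit-once property controls only \emph{recurrence} into the deep wandering region $Y_{\delta}$; entropy of $W$ can still be generated while wandering orbits shadow $\Omega(\Phi)$, so the near-$\Omega(\Phi)$ contribution cannot simply be discarded, and a naive metric projection of a separated set onto $\Omega(\Phi)$ may collapse points that separate only under later iteration. Showing that all such separation is already accounted for by $h_{\text{top}}(\Omega(\Phi),\Phi)$ is the crux, and this is exactly where equicontinuity is indispensable: for each fixed $N$ it renders the family $\{\varphi_{w}^{m,k}:k\le N\}$ uniformly equicontinuous, so that separated sets inside $\Omega_{\delta}$ can be matched with separated sets of $\Omega(\Phi)$ up to an error that is uniform on windows of length $N$, with the limit $\delta\to 0$ organized through $\Phi^{N}$ via Lemma \ref{theorem4}.
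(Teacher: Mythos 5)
Your opening moves are sound, and you have correctly isolated the combinatorial core: the \emph{visit-once} property of wandering open sets is exactly the mechanism the paper exploits (there it appears as the observation that if $C_i=C_j$ occurs in a non-empty element of the refined cover, then $C_i$ cannot be wandering and hence lies in $\zeta_w$). But the proof as proposed has a genuine gap at precisely the point you flag yourself: the passage from the metric tube $\Omega_\delta=\{x:d(x,\Omega(\Phi))<\delta\}$ back to $\Omega(\Phi)$. Knowing that an orbit spends all but $r$ of its first $n$ steps within $\delta$ of $\Omega(\Phi)$ does not produce a single point of $\Omega(\Phi)$ whose orbit $d_{w,n}$-shadows it: the nearby nonwandering point changes from step to step, and equicontinuity only controls windows of bounded length, so it cannot convert "close to $\Omega(\Phi)$ at every time" into "close to the orbit of some $y\in\Omega(\Phi)$ for all times $\le n$". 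Consequently the comparison of $(n,w,\epsilon;\Phi)$-separated sets in the tube with separated sets \emph{of} $\Omega(\Phi)$ is not established, and the sketch "match separated sets up to an error uniform on windows of length $N$, organized through $\Phi^N$" does not supply it, because the window-by-window matches do not concatenate into orbit segments of $\Omega(\Phi)$. In general the entropy of shrinking neighborhoods of a compact set need not converge to the entropy of the set, so this step cannot be waved through.

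The paper's proof is engineered to avoid exactly this obstruction by staying at the level of open covers rather than metric tubes. Given an open cover $\mathcal{A}$ and $w\in I^{1,n}$, one takes a \emph{minimal subcover} $\zeta_w$ of $\Omega(\Phi)$ chosen from $\mathcal{A}_w^{1,n}$; its cardinality is by definition $\mathcal{N}(\mathcal{A}_w^{1,n}|_{\Omega(\Phi)})$, the quantity that computes $h_{\mathrm{top}}(\Omega(\Phi),\Phi)$, and its elements are open sets of $X$ that automatically cover a neighborhood of $\Omega(\Phi)$ at the dynamically correct scale. The leftover compact set $K=X\setminus\bigcup_{B\in\zeta_w}B$ consists of wandering points and is covered by finitely many wandering sets refined into $\mathcal{A}_w^{1,n}$; your visit-once property then bounds the number of non-empty elements of the iterated cover $(\xi_w)_{w^\ast}^{1,k}$ in $(X,\Phi^n)$ by $(m+1)!\,(k+1)^m(\#\zeta_w)^{k+1}$, which after $\tfrac1k\log(\cdot)$ leaves only $\log\#\zeta_w$. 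Equicontinuity enters solely through the identity $h_{\mathrm{top}}(X,\Phi^n)=n\,h_{\mathrm{top}}(X,\Phi)$ of Lemma \ref{theorem4}, not through any matching of separated sets. To repair your argument you would need to replace the tube $\Omega_\delta$ by a subcover of $\Omega(\Phi)$ drawn from $\mathcal{A}_w^{1,n}$ itself, at which point you essentially recover the paper's proof.
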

\begin{proof}
By the definition of topological entropy, we have
$h_{\text{top}}(X,\Phi)\geq h_{\text{top}}(\Omega(\Phi),\Phi|_{\Omega(\Phi)})$. Hence, it is enough to
prove the converse inequality. To do this we will follow the main ideas from the proof
of \cite[Theorem H]{SKLS} and \cite[Lemma 4.1.5]{LAJLMM}.

So let $\mathcal{A}$ be an open cover of $X$. Fix $n\geq 1$ and $w\in I^{1,n}$. Let $\zeta_{w}$ be an
minimal subcover of $\Omega(\Phi)$ chosen from $\mathcal{A}_{w}^{1,n}$. Since $X$ is a compact metric
space, the set $K=X\setminus \bigcup_{B\in\zeta_{w}}B$ is compact and consists of wandering points.
Hence, we can cover $K$ with a finite number of wandering sets (subsets of $X$, not necessarily of $K$),
each of them contained in some element of $\mathcal{A}_{w}^{1,n}$. These sets,
together with all elements of $\zeta_{w}$, form an open cover $\xi_{w}$ of $X$, finer than $\mathcal{A}_{w}^{1,n}$.
Now, in NAIFS $(X,\Phi^{n})$ for
$w^{\ast}=w_{1}^{\ast}w_{2}^{\ast}\ldots w_{k}^{\ast}\in I_{\ast}^{1,k}$ with $w^{\ast}=w^{\prime}=w_{1}^{\prime}w_{2}^{\prime}\ldots w_{kn}^{\prime}\in I^{1,kn}$, consider
any non-empty element of $(\xi_{w})_{w^{\ast}}^{1,k}$. It is of the form
\begin{equation*}
C_{0}\cap(\varphi_{w_{1}^{\ast}}^{(1,n)})^{-1}(C_{1})
\cap(\varphi_{w_{1}^{\ast}}^{(1,n)})^{-1}\circ(\varphi_{w_{2}^{\ast}}^{(2,n)})^{-1}(C_{2})
\cap\cdots\cap(\varphi_{w_{1}^{\ast}}^{(1,n)})^{-1}\circ
(\varphi_{w_{2}^{\ast}}^{(2,n)})^{-1}\circ
\cdots\circ(\varphi_{w_{k}^{\ast}}^{(k,n)})^{-1}(C_{k}),
\end{equation*}
that is equal to
\begin{equation*}
C_{0}\cap\varphi_{w^{\prime}}^{1,-n}(C_{1})\cap\varphi_{w^{\prime}}^{1,-n}\circ
\varphi_{w^{\prime}}^{n+1,-n}(C_{2})\cap\cdots\cap\varphi_{w^{\prime}}^{1,-n}\circ
\varphi_{w^{\prime}}^{n+1,-n}
\circ\cdots\circ\varphi_{w^{\prime}}^{(k-1)n+1,-n}(C_{k}),
\end{equation*}
where $\varphi_{w_{j}^{\ast}}^{(j,n)}=\varphi_{w^{\prime}}^{(j-1)n+1,n}\in \Phi^{(j,n)}$ for
$1\leq j\leq k$ and $C_{i}\in\xi_{w}$ for $0\leq i\leq k$. Since we assume that this element is non-empety,
we get that if $C_{i}=C_{_{j}}$ for some $i<j$, then
\begin{equation*}
 \varphi_{w^{\prime}}^{1,-n}\circ\cdots\circ\varphi_{w^{\prime}}^{(i-1)n+1,-n}\circ\varphi_{w^{\prime}}^{in+1,-n}
\circ\cdots\circ\varphi_{w^{\prime}}^{(j-1)n+1,-n}(C_{i})\cap\varphi_{w^{\prime}}^{1,-n}
\circ\cdots\circ\varphi_{w^{\prime}}^{(i-1)n+1,-n}(C_{i})\neq\emptyset,
\end{equation*}
so $(\varphi_{w^{\prime}}^{(j-1)n+1,n}\circ\cdots\circ\varphi_{w^{\prime}}^{in+1,n})(C_{i})=\varphi_{w^{\prime}}^{in+1,(j-i)n}(C_{i})$ intersects $C_{i}$, hence $C_{i}$ cannot be wandering
for $\Phi$, this implies that $C_{i}\in\zeta_{w}$.

One can show that \cite[Lemma 4.1.5]{LAJLMM} the number
of elements in cover $(\xi_{w})_{w^{\ast}}^{1,k}$ is not larger than
$(m+1)!\ .\ (k+1)^{m}\ .\ (\#(\zeta_{w}))^{k+1}$, where $m=\#(\xi_{w}\setminus\zeta_{w})$.
Thus,
\begin{eqnarray*}
h(X,\Phi^{n};\xi_{w})
&=& \limsup_{k\to\infty}\frac{1}{k}\log\Bigg(\dfrac{1}{\#(I_{\ast}^{1,k})}
\sum_{w^{\ast}\in I_{\ast}^{1,k}}\mathcal{N}\big((\xi_{w})_{w^{\ast}}^{1,k}\big)\Bigg)\\
&\leq & \limsup_{k\to\infty}\frac{1}{k}\log\Bigg(\dfrac{1}{\#(I_{\ast}^{1,k})}
\sum_{w^{\ast}\in I_{\ast}^{1,k}}(m+1)!\ .\ (k+1)^{m}\ .\ (\#(\zeta_{w}))^{k+1}\Bigg)\\
&=& \limsup_{k\to\infty}\frac{1}{k}\log\Big((m+1)!\ .\ (k+1)^{m}\ .\ (\#(\zeta_{w}))^{k+1}\Big)=\log(\#(\zeta_{w})).
\end{eqnarray*}

Now we are ready to finish the proof. The equicontinuity assumption of the NAIFS $(X,\Phi)$ implies that $h_{\text{top}}(X,\Phi)=\dfrac{1}{n}h_{\text{top}}(X,\Phi^{n})$ for each $n\geq 1$, see Lemma \ref{theorem4}.
Also, by the definition of topological entropy it follows
that for any $\epsilon>0$ there is an open cover $\mathcal{A}$ of $X$
with $h_{\text{top}}(X,\Phi^{n})<h(X,\Phi^{n};\mathcal{A})+\epsilon$. Using these facts and relation (\ref{n55}), we get that for any positive integer $n$ and $\epsilon>0$ there
is an open cover $\mathcal{A}$ of $X$ with

\begin{eqnarray*}
h_{\text{top}}(X,\Phi)
&=& \dfrac{1}{n}h_{\text{top}}(X,\Phi^{n})<\dfrac{1}{n}h(X,\Phi^{n};\mathcal{A})+\dfrac{\epsilon}{n}
\leq\dfrac{1}{n}h(X,\Phi^{n};\mathcal{A}_{w}^{1,n})+\dfrac{\epsilon}{n}\\
&\leq & \dfrac{1}{n}h(X,\Phi^{n};\xi_{w})+\dfrac{\epsilon}{n}\leq\dfrac{1}{n}\log(\#(\zeta_{w}))+\dfrac{\epsilon}{n}=
\dfrac{1}{n}\log\mathcal{N}\big(\mathcal{A}_{w}^{1,n}|_{\Omega(\Phi)}\big)+\dfrac{\epsilon}{n},
\end{eqnarray*}
where $w\in I^{1,n}$ is arbitrary. Thus,
\begin{equation*}
h_{\text{top}}(X,\Phi)\leq\dfrac{1}{n}\log\Bigg(\dfrac{1}{\#(I^{1,n})}\sum_{w\in I^{1,n}}\mathcal{N}(\mathcal{A}_{w}^{1,n}|_{\Omega(\Phi)})\Bigg)+\dfrac{\epsilon}{n}.
\end{equation*}
Taking the upper limit when $n\to\infty$, we have
\begin{equation*}
h_{\text{top}}(X,\Phi)\leq h(\Omega(\Phi),\Phi|_{\Omega(\Phi)};\mathcal{A})\leq
h_{\text{top}}(\Omega(\Phi),\Phi|_{\Omega(\Phi)}),
\end{equation*}
that completes the proof.
\end{proof}
\begin{remark}
The equicontinuity assumption in Theorem \ref{theorem10} is necessary, because in the proof of Theorem \ref{theorem10} we use the equality $h_{\text{top}}(X,\Phi^{n})=n\ .\ h_{\text{top}}(X,\Phi)$ that is not true (in general case) without the equicontinuity assumption, see Remark \ref{remark10} and Lemma \ref{theorem4}.
\end{remark}
\section{Specification property and entropy}\label{section4}
The notion of entropy is one of the most important objects in dynamical systems, either as a topological
invariant or as a measure of the chaoticity of dynamical systems. Several notions of entropy have
been introduced for other branches of dynamical systems in an attempt to describe their dynamical
characteristics. In this section, we define entropy points for NAIFSs. The notion of entropy points was
defined for finitely generated pseudogroup actions, finitely generated semigroup actions and
non-autonomous discrete dynamical systems, respectively in \cite{AB}, \cite{FRPV} and \cite{JNFG}.
Roughly speaking, entropy points are
those that their local neighborhoods reflect the complexity of the entire dynamical system in the context of
topological entropy. Also, we define a notion of specification property for NAIFSs and characterize entropy
points and topological entropy for NAIFSs with the specification property.
\begin{definition}
An NAIFS $(X,\Phi)$ of continuous maps on a compact topological space $X$, admits
an \emph{entropy point} $x_{0}\in X$ if for every open neighbourhood $U$ of $x_{0}$ the equality
$h_{\text{top}}(X, \Phi)=h_{\text{top}}(\text{cl}(U), \Phi)$ holds.
\end{definition}

The notion of specification was first introduced in the 1970s as a property of uniformly hyperbolic basic
pieces and became a characterization of complexity in dynamical systems. Thus, several notions of specification had been introduced in an attempt to describe their dynamical characteristics for dynamical systems \cite{JNFG,FRPV,DJT,VSY1,KY}. In the following definition, we give a concept of specification property for NAIFSs.

\begin{definition}
An NAIFS $(X,\Phi)$ of continuous maps on a compact metric space $(X,d)$, is said to
have \emph{the specification property} if for every $\delta>0$ there is
$N(\delta)\in\mathbb{N}$ such that for each $w\in I^{1,\infty}$, any $x_{1},x_{2},\ldots,x_{s}\in X$
with $s\geq 2$ and any sequence $0=j_{1}\leq k_{1}<j_{2}\leq k_{2}<\cdots<j_{s}\leq k_{s} $ of
integers with $j_{n+1}-k_{n}\geq N(\delta)$ for $n=1,\ldots,s-1$, there is a point $x\in X$ such
that $d(\varphi_{w}^{1,i}(x),\varphi_{w}^{1,i}(x_{m}))\leq\delta$ for each $1\leq m\leq s$ and
any $j_{m}\leq i\leq k_{m}$. In other words, an NAIFS $(X,\Phi)$ has the specification property if we have the specification property along every branch $w\in I^{1,\infty}$ as a non-autonomous discrete dynamical system, where $N(\delta)$ is independent of $w\in I^{1,\infty}$, for each $\delta>0$.
\end{definition}
In the last section, we illustrate some examples of NAIFSs for which the specification property hold.

Rodrigues and Varandas \cite{FRPV} showed that for any finitely generated continuous semigroup action
of local homeomorphisms on a compact Riemannian manifold with the strong orbital specification property
(weak orbital specification property), every point is an entropy point. Also, they showed that any
finitely generated continuous semigroup action on a compact metric space with the
strong orbital specification property (weak orbital specification property under some other
conditions) has positive topological entropy. Also, Nazarian Sarkooh and Ghane \cite{JNFG} showed
that every non-autonomous discrete dynamical system of surjective maps with the specification property
has positive topological entropy and all points are entropy point; in particular, it is topologically chaotic.
In this section, we extend these results to NAIFSs.
\subsection{Specification property and entropy points}
We investigate here the relation between the specification property of NAIFSs and the existence of
entropy points.
\begin{theorem}\label{theorem2}
Let $(X,\Phi)$ be an NAIFS of surjective continuous maps on a compact metric space $(X,d)$ without any isolated point. If the NAIFS $(X,\Phi)$ satisfies the specification property, then every point of $X$ is an entropy point.
\end{theorem}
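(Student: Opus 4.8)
The plan is to reduce to a single nontrivial inequality and then transport separated sets into the neighbourhood by specification. Fix $x_{0}\in X$ and an open neighbourhood $U$ of $x_{0}$. Since $\mathrm{cl}(U)\subseteq X$, the definition of topological entropy on subsets (equivalently, Proposition \ref{proposition1}) gives $h_{\text{top}}(\mathrm{cl}(U),\Phi)\leq h_{\text{top}}(X,\Phi)$, so it suffices to prove the reverse inequality, and I will work throughout with the separated-set description of Lemma \ref{lemma1}. Choose $\delta_{0}>0$ with $B_{\delta_{0}}(x_{0})\subseteq U$, fix $\epsilon>0$, set $\delta<\min(\delta_{0},\epsilon/4)$, and let $N=N(\delta)$ be the specification constant.

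The heart of the argument is, for each $n\geq 1$ and each word $w\in I^{1,N+n}$, to build many $(N+n,w,\epsilon/2;\Phi)$-separated points inside $U$. First take a maximal $(n,w|^{N},\epsilon;\Phi_{N+1})$-separated set $\{z_{1},\dots,z_{M}\}$ in $X$, so $M=s_{n}(X;w|^{N},\epsilon,\Phi_{N+1})$. Using surjectivity of the composition $\varphi_{w}^{1,N}$, choose $y_{a}\in X$ with $\varphi_{w}^{1,N}(y_{a})=z_{a}$. Now apply the specification property along $w$ with $s=2$, anchor $x_{1}=x_{0}$ on the window $j_{1}=k_{1}=0$, and target $x_{2}=y_{a}$ on the window $j_{2}=N\leq k_{2}=N+n$ (the gap $N-0\geq N(\delta)$ is admissible): this yields $x^{(a)}\in X$ with $d(x^{(a)},x_{0})\leq\delta$, hence $x^{(a)}\in B_{\delta_{0}}(x_{0})\subseteq U$, and with $d(\varphi_{w}^{1,i}(x^{(a)}),\varphi_{w}^{1,i}(y_{a}))\leq\delta$ for $N\leq i\leq N+n$. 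If $z_{a}\neq z_{b}$ then $\varphi_{w}^{1,N+l}(y_{a})=\varphi_{w|^{N}}^{N+1,l}(z_{a})$ and the corresponding image of $z_b$ are more than $\epsilon$ apart for some $0\leq l\leq n$, so the triangle inequality gives $d(\varphi_{w}^{1,N+l}(x^{(a)}),\varphi_{w}^{1,N+l}(x^{(b)}))>\epsilon-2\delta>\epsilon/2$. Thus $\{x^{(a)}\}_{a=1}^{M}$ is an $(N+n,w,\epsilon/2;\Phi)$-separated subset of $U$; the points are automatically distinct (being separated), and it is precisely the no-isolated-point hypothesis that guarantees $B_{\delta}(x_{0})$ is infinite, so that these $M$ distinct points can indeed live inside $U$. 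Hence $s_{N+n}(\mathrm{cl}(U);w,\epsilon/2,\Phi)\geq s_{n}(X;w|^{N},\epsilon,\Phi_{N+1})$. Summing over $w\in I^{1,N+n}$ and using $\#(I^{1,N+n})=\#(I^{1,N})\cdot\#(I^{N+1,n})$ together with the fact that each $w'\in I^{N+1,n}$ is the tail $w|^{N}$ of exactly $\#(I^{1,N})$ words $w$, the factor $\#(I^{1,N})$ cancels and I obtain $S_{N+n}(\mathrm{cl}(U);\epsilon/2,\Phi)\geq S_{n}(X;\epsilon,\Phi_{N+1})$. Applying $\frac1{N+n}\log(\cdot)$, taking $\limsup_{n\to\infty}$, and reindexing $m=N+n$ on the left yields
\[
h_{\text{top}}(\mathrm{cl}(U),\Phi)\ \geq\ \limsup_{n\to\infty}\frac1n\log S_{n}(X;\epsilon,\Phi_{N+1})\ =:\ P_{N+1}(\epsilon).
\]

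The step I expect to be the main obstacle is the passage $\epsilon\to0$: because $\delta<\epsilon/4$ forces $N=N(\delta)\to\infty$, the right-hand side refers to ever more strongly shifted systems $\Phi_{N+1}$ at the shrinking scale $\epsilon$, and one cannot simply send $\epsilon\to0$ with $N$ fixed. I resolve this by a \emph{fixed-scale} shift comparison, namely $P_{N+1}(\epsilon)\geq P_{1}(\epsilon)$ for every $\epsilon>0$ and every $N$, where $P_{1}(\epsilon)=\limsup_{n}\frac1n\log S_{n}(X;\epsilon,\Phi)$. This is the scale-$\epsilon$ refinement of the per-cover monotonicity already proved in Lemma \ref{theorem1}: two orbit segments of $(X,\Phi)$ that stay $\epsilon$-close on their first $N+1$ coordinates can only be $\epsilon$-distinguished on the remaining coordinates, i.e. as segments of $(X,\Phi_{N+1})$, and the number of $\frac{\epsilon}{2}$-distinguishable prefixes is at most the $\frac{\epsilon}{2}$-covering number of $X^{N+1}$, which is finite by compactness. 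This gives $s_{N+n}(X;w,\epsilon,\Phi)\leq C(\epsilon,N)\,s_{n}(X;w|^{N},\epsilon,\Phi_{N+1})$ with $C(\epsilon,N)$ independent of $n$; the factor is sub-exponential in $n$ and invisible to the growth rate, whence $P_{N+1}(\epsilon)\geq P_{1}(\epsilon)$. Granting this, for every $\epsilon>0$ we get $h_{\text{top}}(\mathrm{cl}(U),\Phi)\geq P_{N+1}(\epsilon)\geq P_{1}(\epsilon)$, and letting $\epsilon\to0$ yields $h_{\text{top}}(\mathrm{cl}(U),\Phi)\geq h_{\text{top}}(X,\Phi)$. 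Since $x_{0}$ and $U$ were arbitrary, every point of $X$ is an entropy point.
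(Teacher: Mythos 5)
Your proposal is correct, and its core construction is the same as the paper's: fix a word $w$ of length $N+n$, take a maximal $(n,w|^{N},\epsilon;\Phi_{N+1})$-separated set, pull it back under the surjection $\varphi_{w}^{1,N}$, and use specification with $s=2$ to shadow the anchor point on the window $\{0\}$ and the pulled-back points on the window $[N,N+n]$, producing an $(N+n,w,\epsilon/2;\Phi)$-separated set inside the neighbourhood; the bookkeeping $S_{N+n}(\mathrm{cl}(U);\epsilon/2,\Phi)\geq S_{n}(X;\epsilon,\Phi_{N+1})$ is identical (the paper even overstates this by a factor $\#(I^{1,N})$ before discarding it). Where you genuinely diverge is at the end. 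The paper passes directly from the single-scale inequality $\limsup_n\frac1n\log S_n(\mathrm{cl}(V);\epsilon/2,\Phi)\geq\limsup_n\frac1n\log S_n(\epsilon,\Phi_{N(\epsilon/4)+1})$ to the chain $h_{\text{top}}(\mathrm{cl}(V),\Phi)\geq h_{\text{top}}(X,\Phi_{N(\epsilon/4)+1})\geq h_{\text{top}}(X,\Phi)$, but the middle inequality does not follow from a bound at one scale $\epsilon$, since $h_{\text{top}}(X,\Phi_{N+1})$ is a supremum over all scales and the scale is coupled to $N=N(\epsilon/4)$, which blows up as $\epsilon\to0$ --- exactly the obstacle you flag. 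Your fixed-scale shift comparison $P_{N+1}(\epsilon)\geq P_{1}(\epsilon)$, proved by noting that points of an $(N+n,w,\epsilon;\Phi)$-separated set with $\epsilon$-indistinguishable length-$(N+1)$ prefixes must already be separated as a set for $\Phi_{N+1}$, with the number of prefix classes bounded by a covering number of $X^{N+1}$ independent of $n$, is correct and is precisely the missing ingredient that lets you decouple $N$ from $\epsilon$ and send $\epsilon\to0$. So your write-up is not just a valid proof but a repaired and more complete version of the paper's own argument; the only inessential quibble is that the no-isolated-point hypothesis is not really what makes your separated points land in $U$ (specification already places them in $B_{\delta}(x_0)$, and their distinctness follows from separation), so that aside could be dropped.
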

\begin{proof}
According to Lemma \ref{theorem1}, $h_{\text{top}}(X,\Phi)\leq h_{\text{top}}(X,\Phi_{k})$ for every
$k\geq 1$. Also, by Lemma \ref{lemma1},
\begin{equation*}
 h_{\text{top}}(X, \Phi)=\lim_{\epsilon\to 0}\limsup_{n\to\infty}\frac{1}{n}\log S_{n}(\epsilon,\Phi),
\ \text{where}\
S_{n}(\epsilon,\Phi)=\dfrac{1}{\#(I^{1,n})}\sum_{w\in I^{1,n}}s_{n}(w,\epsilon,\Phi).
\end{equation*}
Using these facts, we show that for every $z\in X$ and every open neighborhood $V$ of $z$,
$h_{\text{top}}(X,\Phi)=h_{\text{top}}(\text{cl}(V),\Phi)$.
For $\epsilon>0$ define $W_{\epsilon}:=\{y\in V: d(y,\partial V)>\frac{\epsilon}{4}\}$.
Fix $\epsilon>0$ such that the open set $W_{\epsilon}$ is non-empty.
Take $N(\frac{\epsilon}{4})\geq 1$ given by the definition of specification property. Fix
$w=w_{1}w_{2}\ldots w_{N(\frac{\epsilon}{4})}w_{N(\frac{\epsilon}{4})+1}\ldots w_{N(\frac{\epsilon}{4})+n}\in I^{1,N(\frac{\epsilon}{4})+n}$ and take
\begin{equation*}
 w^{\prime}:=w|^{N(\frac{\epsilon}{4})}=w_{N(\frac{\epsilon}{4})+1}w_{N(\frac{\epsilon}{4})+2}\ldots w_{N(\frac{\epsilon}{4})+n}\in I^{N(\frac{\epsilon}{4})+1,n}.
\end{equation*}
 Let
\begin{itemize}
\item
 $E:=\{z_{1}, z_{2}, \ldots, z_{l}\}\subseteq X$ be a
maximal $(n,w^{\prime},\epsilon;\Phi_{N(\frac{\epsilon}{4})+1})$-separated set,
\item
$E^{\prime}=\{z_{1}^{\prime}, z_{2}^{\prime}, \ldots, z_{l}^{\prime}\}\subseteq X$ be a preimage
set of $E$ under $\varphi_{w}^{1,N(\frac{\epsilon}{4})}$, i.e.
$\varphi_{w}^{1,N(\frac{\epsilon}{4})}(z_{i}^{\prime})=z_{i}$ for $1\leq i\leq l$,
\item
$y\in W_{\epsilon}$ be an arbitrary point ($W_{\epsilon}\neq \emptyset$, because $X$ does not have any isolated point).
\end{itemize}

Let $j_{1}=k_{1}=0$, $j_{2}=N(\frac{\epsilon}{4})$ and $k_{2}=N(\frac{\epsilon}{4})+n$. By the definition
of specification property, for every $z_{i}^{\prime}\in E^{\prime}$, by taking $x_{1}=y$ and $x_{2}=z_{i}^{\prime}$, there
exists $y_{i}\in B(y,\frac{\epsilon}{4})$ such that
$\varphi_{w}^{1,N(\frac{\epsilon}{4})}(y_{i})\in B(\varphi_{w}^{1,N(\frac{\epsilon}{4})}(z_{i}^{\prime});w^{\prime},n,\frac{\epsilon}{4})=B(z_{i};w^{\prime},n,\frac{\epsilon}{4})$.
Since $E:=\{z_{1},z_{2},\ldots,z_{l}\}\subseteq X$ is a
maximal $(n,w^{\prime},\epsilon;\Phi_{N(\frac{\epsilon}{4})+1})$-separated set, the set $\{y_{i}\}_{i=1}^{l}\subseteq cl(V)$ is
$(N(\frac{\epsilon}{4})+n,w,\frac{\epsilon}{2};\Phi)$-separated. So
$s_{N(\frac{\epsilon}{4})+n}(cl(V);w,\frac{\epsilon}{2},\Phi)\geq
s_{n}(w^{\prime},\epsilon,\Phi_{N(\frac{\epsilon}{4})+1})$, that implies
\begin{equation*}
S_{N(\frac{\epsilon}{4})+n}(\text{cl}(V);\frac{\epsilon}{2},\Phi)
\geq \#(I^{1,N(\frac{\epsilon}{4})})\ .\ S_{n}(\epsilon,\Phi_{N(\frac{\epsilon}{4})+1})
\geq S_{n}(\epsilon,\Phi_{N(\frac{\epsilon}{4})+1}).
\end{equation*}
Thus
\begin{eqnarray*}
\limsup_{n\to\infty}\dfrac{1}{n}\log S_{n}(\text{cl}(V);\frac{\epsilon}{2},\Phi)
&=& \limsup_{n\to\infty}\dfrac{1}{N(\frac{\epsilon}{4})+n}\log S_{N(\frac{\epsilon}{4})+n}(\text{cl}(V);\frac{\epsilon}{2},\Phi)\\
&\geq & \limsup_{n\to\infty}\dfrac{1}{N(\frac{\epsilon}{4})+n}\log S_{n}(\epsilon,\Phi_{N(\frac{\epsilon}{4})+1})\\
&= & \limsup_{n\to\infty}\dfrac{1}{n}\log S_{n}(\epsilon,\Phi_{N(\frac{\epsilon}{4})+1}).
\end{eqnarray*}
This implies
$h_{\text{top}}(X,\Phi)\geq h_{\text{top}}(\text{cl}(V),\Phi)\geq h_{\text{top}}(X,\Phi_{N(\frac{\epsilon}{4})+1})\geq h_{\text{top}}(X,\Phi)$.
Hence, we have $h_{\text{top}}(X,\Phi)=h_{\text{top}}(\text{cl}(V),\Phi)$, i.e. every point is an entropy point.
\end{proof}
By Lemma \ref{theorem1} and the proof of Theorem \ref{theorem2}, we conclude the following corollary.
\begin{corollary}\label{corollary1}
Let $(X,\Phi)$ be an NAIFS of surjective continuous maps on a compact metric space $(X,d)$ without any isolated point. If the NAIFS $(X,\Phi)$ satisfies the specification property, then $h_{\text{top}}(X,\Phi)=h_{\text{top}}(X,\Phi_{i})$ for every $i\geq1$.
\end{corollary}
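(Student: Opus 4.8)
The plan is to derive the corollary from the monotonicity in Lemma \ref{theorem1} together with the quantitative heart of the proof of Theorem \ref{theorem2}, read with the neighbourhood taken to be the whole space. By Lemma \ref{theorem1} the numbers $h_{\text{top}}(X,\Phi_{i})$ form a non-decreasing sequence in $i$ whose first term is $h_{\text{top}}(X,\Phi_{1})=h_{\text{top}}(X,\Phi)$; hence the inequality $h_{\text{top}}(X,\Phi)\leq h_{\text{top}}(X,\Phi_{i})$ is automatic, and the whole task is the reverse bound $h_{\text{top}}(X,\Phi_{i})\leq h_{\text{top}}(X,\Phi)$ for each fixed $i\geq1$. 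Equivalently, I must show that this non-decreasing sequence is constant, i.e. never exceeds its value at $i=1$.

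First I would revisit the construction in the proof of Theorem \ref{theorem2} with $V=X$. Then $\mathrm{cl}(V)=X$, the boundary $\partial V$ is empty, and the set $W_{\epsilon}$ is all of $X$, so the argument runs verbatim. The construction starts from a maximal $(n,w^{\prime},\epsilon;\Phi_{N+1})$-separated set, prepends a connecting word of length $N$ supplied by the specification property to tie its points to a fixed reference point, and thereby produces an $(N+n,w,\frac{\epsilon}{2};\Phi)$-separated set of the same cardinality. This yields $h_{\text{top}}(X,\Phi)\geq h_{\text{top}}(X,\Phi_{N+1})$, which together with Lemma \ref{theorem1} upgrades to the equality $h_{\text{top}}(X,\Phi)=h_{\text{top}}(X,\Phi_{N+1})$.

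The main point that closes the argument, and the one needing care, is that the index $N+1$ produced here can be made as large as we please. The specification constant is required only to be large enough: if $N(\delta)$ witnesses specification for a gauge $\delta$, then so does every integer $N\geq N(\delta)$, because imposing larger gaps $j_{n+1}-k_{n}\geq N$ only restricts the admissible configurations. Thus, fixing a gauge $\epsilon$ and a target index $i$, I run the construction with a specification constant $N\geq i-1$, obtaining $h_{\text{top}}(X,\Phi)=h_{\text{top}}(X,\Phi_{N+1})$ with $N+1\geq i$. Monotonicity now squeezes $i$ between the two equal ends,
\begin{equation*}
h_{\text{top}}(X,\Phi)=h_{\text{top}}(X,\Phi_{1})\leq h_{\text{top}}(X,\Phi_{i})\leq h_{\text{top}}(X,\Phi_{N+1})=h_{\text{top}}(X,\Phi),
\end{equation*}
forcing $h_{\text{top}}(X,\Phi_{i})=h_{\text{top}}(X,\Phi)$; since $i$ was arbitrary, the corollary follows.

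I expect the only genuinely delicate issue to be the joint bookkeeping of the gauge $\epsilon$ and the index $N$ when passing from the separated-set count to the full topological entropies, since the estimate compares complexity at gauge $\frac{\epsilon}{2}$ for $\Phi$ against gauge $\epsilon$ for $\Phi_{N+1}$. One can either invoke the equality already obtained inside the proof of Theorem \ref{theorem2} directly, or, to be entirely safe, send $N\to\infty$ at a fixed gauge first (producing the asymptotical quantity $h^{\ast}(X,\Phi)$) and only then let $\epsilon\to0$. Both routes reduce the statement to the monotone squeeze above, so I anticipate no further obstacle.
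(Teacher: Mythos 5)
Your proposal is correct and follows essentially the same route as the paper: the paper's proof of this corollary is precisely the combination of the monotonicity in Lemma \ref{theorem1} with the chain $h_{\text{top}}(X,\Phi)\geq h_{\text{top}}(\mathrm{cl}(V),\Phi)\geq h_{\text{top}}(X,\Phi_{N(\frac{\epsilon}{4})+1})\geq h_{\text{top}}(X,\Phi)$ established at the end of the proof of Theorem \ref{theorem2}, read with $V=X$. Your explicit observation that any $N\geq N(\delta)$ also witnesses the specification property, so the index $N+1$ can be pushed beyond any prescribed $i$ before applying the monotone squeeze, is exactly the detail the paper leaves implicit in order to cover the case $i>N(\frac{\epsilon}{4})+1$.
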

\subsection{Specification property and positive topological entropy}
In this subsection, we show that the specification property is enough to guarantee that any NAIFS
of surjective maps has positive topological entropy. More precisely, we have the following theorem.
\begin{theorem}\label{theorem3}
Let $(X,\Phi)$ be an NAIFS of surjective continuous maps on a compact metric space $(X,d)$ without any isolated point. If the NAIFS $(X,\Phi)$ satisfies the specification property, then it has positive topological entropy, i.e. $h_{\text{top}}(X,\Phi)>0$.
\end{theorem}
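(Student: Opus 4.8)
The plan is to produce, along every branch $w\in I^{1,\infty}$ simultaneously, an exponentially large $(M,w,\eta;\Phi)$-separated set, and then to feed this into the separated-sets formula for entropy from Lemma \ref{lemma1}. Since $X$ has no isolated point it is infinite, so I can fix two distinct points $p_{0},p_{1}\in X$ and set $4\eta:=d(p_{0},p_{1})>0$. Applying the specification property with $\delta=\eta$ yields an integer $N:=N(\eta)$ that is independent of the branch $w$.

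Fix $q\geq 2$, put $M:=(q-1)N$, and choose the single-point time windows $j_{m}=k_{m}=(m-1)N$ for $m=1,\dots,q$, so that the gaps $j_{m+1}-k_{m}=N$ meet the hypothesis of specification. Fix any $w\in I^{1,\infty}$. For each binary string $a=(a_{1},\dots,a_{q})\in\{0,1\}^{q}$ I exploit surjectivity: each composition $\varphi_{w}^{1,(m-1)N}$ is a composition of surjective maps, hence surjective, so I may pick $x_{m}^{(a)}\in X$ with $\varphi_{w}^{1,(m-1)N}(x_{m}^{(a)})=p_{a_{m}}$. Feeding the points $x_{1}^{(a)},\dots,x_{q}^{(a)}$ into the specification property produces $x_{a}\in X$ with $d(\varphi_{w}^{1,(m-1)N}(x_{a}),p_{a_{m}})\leq\eta$ for every $m$. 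If $a\neq b$ differ in coordinate $m$, then at time $i=(m-1)N\leq M$ the triangle inequality gives $d(\varphi_{w}^{1,i}(x_{a}),\varphi_{w}^{1,i}(x_{b}))\geq d(p_{a_{m}},p_{b_{m}})-2\eta=2\eta>\eta$, whence $d_{w,M}(x_{a},x_{b})>\eta$. Thus $\{x_{a}:a\in\{0,1\}^{q}\}$ is $(M,w,\eta;\Phi)$-separated and $s_{M}(w,\eta,\Phi)\geq 2^{q}$.

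Because $N$ does not depend on $w$, and every word in $I^{1,M}$ is the truncation of some infinite branch while the separated set above depends only on the symbols $w_{1}\dots w_{M}$, the bound $s_{M}(w,\eta,\Phi)\geq 2^{q}$ holds for every $w\in I^{1,M}$; hence the average satisfies $S_{M}(\eta,\Phi)\geq 2^{q}$. Letting $q\to\infty$ along $M=(q-1)N$ gives $\limsup_{M\to\infty}\tfrac1M\log S_{M}(\eta,\Phi)\geq \tfrac{\log 2}{N}>0$. Since the inner quantity in Lemma \ref{lemma1} is non-decreasing as $\epsilon\downarrow 0$, I conclude $h_{\text{top}}(X,\Phi)=\lim_{\epsilon\to0}\limsup_{n\to\infty}\tfrac1n\log S_{n}(\epsilon,\Phi)\geq \tfrac{\log 2}{N}>0$.

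The main obstacle is that the maps may contract, so two fixed marker points could be pushed arbitrarily close together along the orbit; separating the shadowing points by looking only at forward images of a single fixed pair of markers would then fail. The device that circumvents this is surjectivity: rather than prescribing where the markers start, I prescribe where the orbit of the shadowing point should sit at each marker time by pulling $p_{a_{m}}$ back to a preimage, so that specification forces $\varphi_{w}^{1,(m-1)N}(x_{a})$ to lie near the genuinely $4\eta$-separated points $p_{0},p_{1}$. The second delicate point is that the entropy is an average over all words $w$; this causes no loss precisely because the specification constant $N$ is uniform in $w$, so the count $2^{q}$ is achieved on every branch and survives the averaging.
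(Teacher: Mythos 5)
Your proposal is correct and follows essentially the same route as the paper's proof: both arguments pull the two separated marker points back under $\varphi_{w}^{1,(m-1)N}$ using surjectivity, feed the resulting tuples into the specification property at the single-point windows $j_{m}=k_{m}=(m-1)N$, obtain $2^{q}$ points that are pairwise separated at scale roughly half the marker distance, and then use the uniformity of $N$ in $w$ to push the bound through the average $S_{n}(\epsilon,\Phi)$ and conclude $h_{\text{top}}(X,\Phi)\geq\frac{\log 2}{N}>0$. The only differences are cosmetic (binary-string indexing and a $4\eta$ normalization instead of the paper's explicit $s=2,3$ cases with $2\epsilon$-separated points).
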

\begin{proof}
By Lemma \ref{lemma1}, we know that
\begin{equation*}
 h_{\text{top}}(X, \Phi)=\lim_{\epsilon\to 0}\limsup_{n\to\infty}\frac{1}{n}\log S_{n}(\epsilon,\Phi),
\ \text{where}\
 S_{n}(\epsilon,\Phi)=\dfrac{1}{\#(I^{1,n})}\sum_{w\in I^{1,n}}s_{n}(w,\epsilon,\Phi)
\end{equation*}
and the limit can be replaced by $\sup_{\epsilon>0}$. Thus, it is enough
to prove that there exists $\epsilon>0$ small enough so that
\begin{equation*}
 \limsup_{n\to\infty}\frac{1}{n}\log S_{n}(\epsilon,\Phi)>0.
\end{equation*}
Let $\epsilon>0$ be small and fixed so that there are at least two distinct $2\epsilon$-separated
points $x_{1},y_{1}\in X$, i.e. $d(x_{1},y_{1})>2\epsilon$ (note that X has no any isolated point). Let $N(\frac{\epsilon}{2})\geq 1$ be given by
the definition of specification property.

Fix $w\in I^{1,N(\frac{\epsilon}{2})}$. Take $j_{1}=k_{1}=0$, $j_{2}=k_{2}=N(\frac{\epsilon}{2})$ and
consider preimages $x_{2}$ of $x_{1}$ and $y_{2}$ of $y_{1}$
under $\varphi_{w}^{1,N(\frac{\epsilon}{2})}$, i.e. $\varphi_{w}^{1,N(\frac{\epsilon}{2})}(x_{2})=x_{1}$
and $\varphi_{w}^{1,N(\frac{\epsilon}{2})}(y_{2})=y_{1}$. By applying the specification property to pairs $(x_{1},x_{2})$, $(x_{1},y_{2})$, $(y_{1},x_{2})$ and $(y_{1},y_{2})$,
there are $x_{1,1},x_{1,2}\in B(x_{1},\frac{\epsilon}{2})$ and
$y_{1,1},y_{1,2}\in B(y_{1},\frac{\epsilon}{2})$ such that
\begin{equation*}
\varphi_{w}^{1,N(\frac{\epsilon}{2})}(x_{1,1}),\varphi_{w}^{1,N(\frac{\epsilon}{2})}(y_{1,2})\in B(x_{1},\frac{\epsilon}{2})
\ \ \text{and}\ \
\varphi_{w}^{1,N(\frac{\epsilon}{2})}(x_{1,2}),\varphi_{w}^{1,N(\frac{\epsilon}{2})}(y_{1,1})\in B(y_{1},\frac{\epsilon}{2}).
\end{equation*}
It is clear that the set $\{x_{1,1},x_{1,2},y_{1,1},y_{1,2}\}$
is $(N(\frac{\epsilon}{2}),w,\epsilon;\Phi)$-separated. In particular, it follows
that $s_{N(\frac{\epsilon}{2})}(w,\epsilon,\Phi)\geq 2^{2}$. Hence, we have
\begin{equation*}
 S_{N(\frac{\epsilon}{2})}(\epsilon,\Phi)=\dfrac{1}{\#(I^{1,N(\frac{\epsilon}{2})})}
 \sum_{w\in I^{1,N(\frac{\epsilon}{2})}}s_{N(\frac{\epsilon}{2})}(w,\epsilon,\Phi)\geq\dfrac{1}{\#(I^{1,N(\frac{\epsilon}{2})})}\sum_{w\in I^{1,N(\frac{\epsilon}{2})}}2^{2}=2^{2}.
\end{equation*}
Fix $w\in I^{1,2N(\frac{\epsilon}{2})}$. Take $j_{1}=k_{1}=0$, $j_{2}=k_{2}=N(\frac{\epsilon}{2})$
and $j_{3}=k_{3}=2N(\frac{\epsilon}{2})$. Consider preimages $x_{2}$ of $x_{1}$ and $y_{2}$ of
$y_{1}$ under $\varphi_{w}^{1,N(\frac{\epsilon}{2})}$, i.e.
$\varphi_{w}^{1,N(\frac{\epsilon}{2})}(x_{2})=x_{1}$ and
$\varphi_{w}^{1,N(\frac{\epsilon}{2})}(y_{2})=y_{1}$.
Also, consider preimages $x_{3}$ of $x_{1}$ and $y_{3}$ of $y_{1}$ under
$\varphi_{w}^{1,2N(\frac{\epsilon}{2})}$, i.e. $\varphi_{w}^{1,2N(\frac{\epsilon}{2})}(x_{3})=x_{1}$
and $\varphi_{w}^{1,2N(\frac{\epsilon}{2})}(y_{3})=y_{1}$. By applying the specification property to triples $(x_{1},x_{2},x_{3})$, $(x_{1},x_{2},y_{3})$, $(x_{1},y_{2},x_{3})$, $(x_{1},y_{2},y_{3})$, $(y_{1},y_{2},y_{3})$, $(y_{1},y_{2},x_{3})$, $(y_{1},x_{2},y_{3})$ and $(y_{1},x_{2},x_{3})$,
there are $x_{1,1},x_{1,2},x_{1,3},x_{1,4}\in B(x_{1},\frac{\epsilon}{2})$ and
$y_{1,1},y_{1,2},y_{1,3},y_{1,4}\in B(y_{1},\frac{\epsilon}{2})$ such that
\begin{itemize}
\item
$\varphi_{w}^{1,N(\frac{\epsilon}{2})}(x_{1,1}),\varphi_{w}^{1,2N(\frac{\epsilon}{2})}(x_{1,1})\in B(x_{1},\frac{\epsilon}{2})\ \ \text{and}\ \ \varphi_{w}^{1,N(\frac{\epsilon}{2})}(x_{1,4}),\varphi_{w}^{1,2N(\frac{\epsilon}{2})}(x_{1,4})\in B(y_{1},\frac{\epsilon}{2});$
\item
$\varphi_{w}^{1,N(\frac{\epsilon}{2})}(x_{1,2})\in B(x_{1},\frac{\epsilon}{2})\ \ \text{and}\ \  \varphi_{w}^{1,2N(\frac{\epsilon}{2})}(x_{1,2})\in B(y_{1},\frac{\epsilon}{2});$
\item
$\varphi_{w}^{1,N(\frac{\epsilon}{2})}(x_{1,3})\in B(y_{1},\frac{\epsilon}{2})\ \ \text{and}\ \  \varphi_{w}^{1,2N(\frac{\epsilon}{2})}(x_{1,3})\in B(x_{1},\frac{\epsilon}{2});$
\item
$\varphi_{w}^{1,N(\frac{\epsilon}{2})}(y_{1,1}),\varphi_{w}^{1,2N(\frac{\epsilon}{2})}(y_{1,1})\in B(y_{1},\frac{\epsilon}{2})\ \ \text{and}\ \ \varphi_{w}^{1,N(\frac{\epsilon}{2})}(y_{1,4}),\varphi_{w}^{1,2N(\frac{\epsilon}{2})}(y_{1,4})\in B(x_{1},\frac{\epsilon}{2});$
\item
$\varphi_{w}^{1,N(\frac{\epsilon}{2})}(y_{1,2})\in B(y_{1},\frac{\epsilon}{2})\ \ \text{and}\ \  \varphi_{w}^{1,2N(\frac{\epsilon}{2})}(y_{1,2})\in B(x_{1},\frac{\epsilon}{2});$
\item
$\varphi_{w}^{1,N(\frac{\epsilon}{2})}(y_{1,3})\in B(x_{1},\frac{\epsilon}{2})\ \ \text{and}\ \  \varphi_{w}^{1,2N(\frac{\epsilon}{2})}(y_{1,3})\in B(y_{1},\frac{\epsilon}{2}).$
\end{itemize}
It is clear that the set $\{x_{1,1},x_{1,2},x_{1,3},x_{1,4},y_{1,1},y_{1,2},y_{1,3},y_{1,4}\}$
is $(2N(\frac{\epsilon}{2}),w,\epsilon;\Phi)$-separated. In particular, it follows
that $s_{2N(\frac{\epsilon}{2})}(w,\epsilon,\Phi)\geq 2^{3}$. Hence, we have
\begin{equation*}
 S_{2N(\frac{\epsilon}{2})}(\epsilon,\Phi)=\dfrac{1}{\#(I^{1,2N(\frac{\epsilon}{2})})}
 \sum_{w\in I^{1,2N(\frac{\epsilon}{2})}}s_{2N(\frac{\epsilon}{2})}(w,\epsilon,\Phi)\geq\dfrac{1}{\#(I^{1,2N(\frac{\epsilon}{2})})}\sum_{w\in I^{1,2N(\frac{\epsilon}{2})}}2^{3}=2^{3}.
\end{equation*}
Now, fix $w\in I^{1,dN(\frac{\epsilon}{2})}$ where $d\in\mathbb{N}$. Taking
$j_{1}=k_{1}=0,\ j_{2}=k_{2}=N(\frac{\epsilon}{2}),\ j_{3}=k_{3}=2N(\frac{\epsilon}{2}),\ \ldots,\ j_{d}=k_{d}=(d-1)N(\frac{\epsilon}{2}),\ j_{d+1}=k_{d+1}=dN(\frac{\epsilon}{2})$
and consider the preimages $x_{i}$ of $x_{1}$ and $y_{i}$ of $y_{1}$ under
$\varphi_{w}^{1,(i-1)N(\frac{\epsilon}{2})}$ for $i=2,\ldots,d+1$, i.e.
$\varphi_{w}^{1,(i-1)N(\frac{\epsilon}{2})}(x_{i})=x_{1}$ and
$\varphi_{w}^{1,(i-1)N(\frac{\epsilon}{2})}(y_{i})=y_{1}$. By repeating the previous reasoning
for $(d+1)$-tuples in which the $i$th component choosing from the set $\{x_{i},y_{i}\}$, it follows that
$s_{dN(\frac{\epsilon}{2})}(w,\epsilon,\Phi)\geq 2^{d+1}$. By taking summation over
$w\in I^{1,dN(\frac{\epsilon}{2})}$, we have
\begin{equation*}
S_{dN(\frac{\epsilon}{2})}(\epsilon,\Phi)=\dfrac{1}{\#(I^{1,dN(\frac{\epsilon}{2})})}
\sum_{w\in I^{1,dN(\frac{\epsilon}{2})}}s_{dN(\frac{\epsilon}{2})}(w,\epsilon,\Phi)\geq\dfrac{1}{\#(I^{1,dN(\frac{\epsilon}{2})})}\sum_{w\in I^{1,dN(\frac{\epsilon}{2})}}2^{d+1}=2^{d+1}.
\end{equation*}
Hence,
\begin{equation*}
\limsup_{n\to\infty}\frac{1}{n}\log S_{n}(\epsilon,\Phi)
\geq\limsup_{d\to\infty}\frac{1}{dN(\frac{\epsilon}{2})}\log
S_{dN(\frac{\epsilon}{2})}(\epsilon,\Phi)\geq \limsup_{d\to\infty}\frac{1}{dN(\frac{\epsilon}{2})}\log
2^{d+1}=\dfrac{\log 2}{N(\frac{\epsilon}{2})}.
\end{equation*}
This proves that the topological entropy is positive and finishes the proof.
\end{proof}
As a direct consequence of Theorem \ref{theorem3} and Lemma \ref{theorem1} we have the following corollary.
\begin{corollary}\label{corollary2}
Let $(X,\Phi)$ be an NAIFS of surjective continuous maps on a compact metric space $(X,d)$ without any isolated point. If the NAIFS $(X,\Phi)$ satisfies the specification property, then it has positive asymptotical topological entropy. In particular, the NAIFS $(X,\Phi)$ is topologically chaotic.
\end{corollary}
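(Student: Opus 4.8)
The goal is to show that the asymptotical topological entropy $h^{\ast}(X,\Phi)$ is strictly positive; once this is established, topological chaoticity follows at once from the definition. The plan is to reduce the claim to Theorem \ref{theorem3} by exploiting the monotonicity furnished by Lemma \ref{theorem1}, rather than attempting to verify the hypotheses of Theorem \ref{theorem3} for each shifted system $\Phi_{n}$.

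First I would apply Theorem \ref{theorem3} directly to the NAIFS $(X,\Phi)$: since $(X,\Phi)$ consists of surjective continuous maps on a compact metric space with no isolated point and enjoys the specification property, that theorem yields $h_{\text{top}}(X,\Phi)>0$. Next I would recall the representation of the asymptotical entropy derived above, namely
\begin{equation*}
h^{\ast}(X,\Phi)=\lim_{n\to\infty}h_{\text{top}}(X,\Phi_{n})=\sup_{n}h_{\text{top}}(X,\Phi_{n}),
\end{equation*}
where the second equality uses that, by Lemma \ref{theorem1}, the sequence $\{h_{\text{top}}(X,\Phi_{n})\}_{n\geq 1}$ is non-decreasing. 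Since $\Phi=\Phi_{1}$, bounding the supremum below by its first term gives
\begin{equation*}
h^{\ast}(X,\Phi)\geq h_{\text{top}}(X,\Phi_{1})=h_{\text{top}}(X,\Phi)>0.
\end{equation*}

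Finally, I would invoke the definition of topologically chaotic NAIFS: an NAIFS is topologically chaotic precisely when its asymptotical topological entropy is positive, so the inequality $h^{\ast}(X,\Phi)>0$ immediately delivers the last assertion. I do not expect any genuine obstacle here; the only point requiring care is that one must \emph{not} attempt to apply Theorem \ref{theorem3} to the shifted systems $\Phi_{n}$ (whose inheritance of the specification property one would otherwise have to justify), but instead exploit the monotonicity of $n\mapsto h_{\text{top}}(X,\Phi_{n})$ so that a single application of Theorem \ref{theorem3} to $\Phi$ itself suffices.
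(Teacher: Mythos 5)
Your argument is correct and is exactly what the paper intends: it presents the corollary as a direct consequence of Theorem \ref{theorem3} together with the monotonicity from Lemma \ref{theorem1}, so that $h^{\ast}(X,\Phi)=\sup_{n}h_{\text{top}}(X,\Phi_{n})\geq h_{\text{top}}(X,\Phi)>0$. Your closing observation that one need not verify the specification property for the shifted systems $\Phi_{n}$ is a sensible precaution, but otherwise there is nothing to add.
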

In Theorem \ref{theorem2}, we show that for surjective NAIFSs with the specification property, local
neighborhoods reflect the complexity of the entire dynamical system from the viewpoint of entropy theory.
Also, in Theorem \ref{theorem3} we show that surjective NAIFSs with the specification property have positive topological entropy. Hence, by Theorem \ref{theorem2}, local neighborhoods have positive topological entropy.
More precisely, we have the following corollary.
\begin{corollary}
Let $(X,\Phi)$ be an NAIFS of surjective continuous maps on a compact metric space $(X,d)$ without any isolated point. If the NAIFS $(X,\Phi)$ satisfies the specification property, then $h_{\text{top}}(\text{cl}(V),\Phi)>0$ for any $x\in X$ and any open neighborhood $V$ of $x$.
\end{corollary}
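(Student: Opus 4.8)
The plan is to obtain this corollary immediately by combining the two preceding theorems of this subsection, so the real content has already been established and essentially nothing new needs to be proved. First I would fix an arbitrary point $x\in X$ and an arbitrary open neighborhood $V$ of $x$. Since the hypotheses here---$(X,\Phi)$ is an NAIFS of surjective continuous maps on a compact metric space without isolated points satisfying the specification property---are exactly the hypotheses of both Theorem \ref{theorem2} and Theorem \ref{theorem3}, I am free to invoke either conclusion.

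Next I would apply Theorem \ref{theorem2}, which guarantees that \emph{every} point of $X$ is an entropy point. In particular $x$ is an entropy point, and unwinding the definition of entropy point for the neighborhood $V$ gives the equality
\begin{equation*}
h_{\text{top}}(\text{cl}(V),\Phi)=h_{\text{top}}(X,\Phi).
\end{equation*}
This is the step that transfers the global complexity of the system to the local neighborhood $\text{cl}(V)$.

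Finally I would invoke Theorem \ref{theorem3}, which asserts under the same hypotheses that $h_{\text{top}}(X,\Phi)>0$. Chaining this with the displayed equality yields
\begin{equation*}
h_{\text{top}}(\text{cl}(V),\Phi)=h_{\text{top}}(X,\Phi)>0,
\end{equation*}
and since $x$ and $V$ were arbitrary, the claim follows. I do not anticipate any genuine obstacle here: the whole difficulty has been absorbed into Theorems \ref{theorem2} and \ref{theorem3}, whose proofs build the separated sets via the specification property along a branch $w\in I^{1,\infty}$. The only point worth stating carefully is that both theorems share precisely the same standing assumptions, so no additional hypothesis (such as equicontinuity) is needed to apply them in tandem.
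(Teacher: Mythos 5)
Your proposal is correct and is precisely the argument the paper intends: the corollary is stated as an immediate consequence of Theorem \ref{theorem2} (every point is an entropy point, so $h_{\text{top}}(\text{cl}(V),\Phi)=h_{\text{top}}(X,\Phi)$) combined with Theorem \ref{theorem3} ($h_{\text{top}}(X,\Phi)>0$), and you correctly note that both theorems share identical hypotheses so no extra assumption is needed.
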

\section{Topological pressure}\label{section5}
The notion of topological pressure that is a fundamental notion in thermodynamic formalism is a generalization of topological entropy for dynamical
systems \cite{PW}. Topological pressure is the
main tool in studying dimension of invariant sets and measures for dynamical systems in dimension theory.
Our purpose in this section is to introduce and study the notion of topological pressure for NAIFSs on a
compact topological space.

Consider an NAIFS $(X,\Phi)$ of continuous maps on a compact metric space $(X,d)$.
Let $C(X,\mathbb{R})$ be the space of real-valued continuous functions of $X$. For $\psi\in C(X,\mathbb{R})$
and finite word $w\in I^{m,n}$ we denote $\Sigma_{j=0}^{n}\psi(\varphi_{w}^{m,j})(x)$
by $S_{w,n}\psi(x)$. Also, for subset $U$ of $X$ we put $S_{w,n}\psi(U)=\sup_{x\in U}S_{w,n}\psi(x)$.
\subsection{Definition of topological pressure using spanning sets}
For $\epsilon>0$, $n\geq 1$, $w\in I^{1,n}$ and $\psi\in C(X,\mathbb{R})$, put
\begin{equation*}
Q_{n}(\Phi;w,\psi,\epsilon):=\inf_{F}\Bigg\{\sum_{x\in F}e^{S_{w,n}\psi(x)}: F\ \text{is a}\ (w,n,\epsilon;\Phi)\text{-spanning set for}\ X\Bigg\}
\end{equation*}
and taking
\begin{equation*}
  Q_{n}(\Phi;\psi,\epsilon):=\dfrac{1}{\#(I^{1,n})}\sum_{w\in I^{1,n}}Q_{n}(\Phi;w,\psi,\epsilon).
\end{equation*}

\begin{remark}\label{remark1}
By definitions the following statements are true.
\begin{enumerate}
\item[(1)]
$0<Q_{n}(\Phi;w,\psi,\epsilon)\leq\|e^{S_{w,n}\psi}\| r_{n}(w,\epsilon,\Phi)<\infty$, where
$\|\psi\|=\max_{x\in X}|\psi(x)|$. Hence,
$0<Q_{n}(\Phi;\psi,\epsilon)\leq e^{(n+1)\|\psi\|} R_{n}(\epsilon,\Phi)<\infty$.
\item[(2)]
If $\epsilon_{1}<\epsilon_{2}$, then $Q_{n}(\Phi;w,\psi,\epsilon_{1})\geq Q_{n}(\Phi;w,\psi,\epsilon_{2})$. Hence, $Q_{n}(\Phi;\psi,\epsilon_{1})\geq Q_{n}(\Phi;\psi,\epsilon_{2})$.
\item[(3)]
$Q_{n}(\Phi;w,0,\epsilon)=r_{n}(w,\epsilon,\Phi)$. Hence, $Q_{n}(\Phi;0,\epsilon)=R_{n}(\epsilon,\Phi)$.
\item[(4)]
In the definition of $Q_{n}(\Phi;w,\psi,\epsilon)$, it suffices to take the infinium over those spanning sets which do not have proper subsets that $(w,n,\epsilon;\Phi)$-span $X$. This is because $e^{S_{w,n}\psi(x)}>0$.
\end{enumerate}
\end{remark}
Set
$$Q(\Phi;\psi,\epsilon):=\limsup_{n\to\infty}\frac{1}{n}\log Q_{n}(\Phi;\psi,\epsilon).$$
\begin{remark}\label{remark2}
By Remark \ref{remark1}, the following two facts hold.
\begin{enumerate}
\item[(1)]
$Q(\Phi;\psi,\epsilon)\leq\|\psi\|+\limsup_{n\to\infty}\frac{1}{n}\log R_{n}(\epsilon,\Phi)<\infty.$
\item[(2)]
If $\epsilon_{1}<\epsilon_{2}$, then $Q(\Phi;\psi,\epsilon_{1})\geq Q(\Phi;\psi,\epsilon_{2})$, i.e.
$Q(\Phi;\psi,\epsilon)$ is non-decreasing with respect to $\epsilon$.
\end{enumerate}
\end{remark}
\begin{definition}
For $\psi\in C(X,\mathbb{R})$, the \emph{topological pressure} of an NAIFS $(X,\Phi)$ with respect
to $\psi$ is defined as
$$P_{\text{top}}(\Phi,\psi):=\lim_{\epsilon\to 0}Q(\Phi;\psi,\epsilon)=\lim_{\epsilon\to 0}\limsup_{n\to\infty}\frac{1}{n}\log Q_{n}(\Phi;\psi,\epsilon).$$
\end{definition}
This is a natural extension of the definition of topological pressure for autonomous dynamical systems,
non-autonomous discrete dynamical systems and semigroup actions. Also, it is clear
that $P_{\text{top}}(\Phi,0)=h_{\text{top}}(X,\Phi)$.
\begin{remark}
Note that by part $(2)$ of Remark \ref{remark2}, the topological pressure $P_{\text{top}}(\Phi,\psi)$ always exists, but it could be infinite. Indeed, assume
$\#(I^{(j)})=1$, $\Phi^{(j)}=\{\varphi\}$ for all $j\geq 1$ which yields an autonomous dynamical system and take the observer $\psi=0$. In this case, we have
$P_{\text{top}}(\Phi,\psi)=h_{\text{top}}(\varphi)$ which is the classical topological entropy in the sense of
Bowen. Thus, if we choose an autonomous system $\varphi$ with the observer $\psi=0$ that possesses infinite topological entropy \cite{DBBB} then $P_{\text{top}}(\Phi,\psi)=\infty$, where the NAIFS $\Phi$
defined as above.
\end{remark}
\subsection{Definition of topological pressure using separated sets}
For $\epsilon>0$, $n\geq 1$, $w\in I^{1,n}$ and $\psi\in C(X,\mathbb{R})$, put
\begin{equation*}
 P_{n}(\Phi;w,\psi,\epsilon):=\sup_{E}\Bigg\{\sum_{x\in E}e^{S_{w,n}\psi(x)}: E\ \text{is a}\ (w,n,\epsilon;\Phi)\text{-separated set for}\ X\Bigg\}
\end{equation*}
and taking
\begin{equation*}
P_{n}(\Phi;\psi,\epsilon):=\dfrac{1}{\#(I^{1,n})}\sum_{w\in I^{1,n}}P_{n}(\Phi;w,\psi,\epsilon).
\end{equation*}
\begin{remark}\label{remark3}
By definitions the following statements are true.
\begin{itemize}
\item[(1)]
If $\epsilon_{1}<\epsilon_{2}$, then $P_{n}(\Phi;w,\psi,\epsilon_{1})\geq P_{n}(\Phi;w,\psi,\epsilon_{2})$.
Hence, $P_{n}(\Phi;\psi,\epsilon_{1})\geq P_{n}(\Phi;\psi,\epsilon_{2})$.
\item[(2)]
$P_{n}(\Phi;w,0,\epsilon)=s_{n}(w,\epsilon,\Phi)$. Hence,
$0<P_{n}(\Phi;0,\epsilon)=S_{n}(\epsilon,\Phi)$.
\item[(3)]
In the definition of $P_{n}(\Phi;w,\psi,\epsilon)$, it suffices to take the supremum over all
$(w,n,\epsilon;\Phi)$-separated sets having maximal cardinality. This is because $e^{S_{w,n}\psi(x)}>0$.
\item[(4)]
$Q_{n}(\Phi;\psi,\epsilon)\leq P_{n}(\Phi;\psi,\epsilon)$.
\begin{proof}
Fix $w\in I^{1,n}$. Since $e^{S_{w,n}\psi(x)}>0$ and by the fact that each $(w,n,\epsilon;\Phi)$-separated set which cannot be enlarged to another $(w,n,\epsilon;\Phi)$-separated set must be a $(w,n,\epsilon;\Phi)$-spanning set for $X$, we have $Q_{n}(\Phi;w,\psi,\epsilon)\leq P_{n}(\Phi;w,\psi,\epsilon)$. Hence, by the definition of $Q_{n}(\Phi;\psi,\epsilon)$ and $P_{n}(\Phi;\psi,\epsilon)$, we have
$Q_{n}(\Phi;\psi,\epsilon)\leq P_{n}(\Phi;\psi,\epsilon)$.
\end{proof}
\item[(5)]
If $\delta=\sup\{|\psi(x)-\psi(y)|:d(x,y)<\frac{\epsilon}{2}\}$, then
$P_{n}(\Phi;\psi,\epsilon)\leq e^{(n+1)\delta}Q_{n}(\Phi;\psi,\frac{\epsilon}{2})$.
\begin{proof}
Fix $w\in I^{1,n}$. Let $E$ be a $(w,n,\epsilon;\Phi)$-separated set and $F$ is a $(w,n,\frac{\epsilon}{2};\Phi)$-spanning set. Define $\phi:E\to F$ by choosing, for each $x\in E$, some point $\phi(x)\in F$ with $d_{w,n}(x,\phi(x))<\frac{\epsilon}{2}$. The point $\phi(x)\in F$ that satisfies in this condition is unique. Then $\phi$ is injective and
\begin{eqnarray*}
\sum_{y\in F}e^{S_{w,n}\psi(y)}\geq\sum_{y\in \phi(E)}e^{S_{w,n}\psi(y)}
&\geq & \Big(\min_{x\in E}e^{S_{w,n}\psi(\phi(x))-S_{w,n}\psi(x)}\Big)\sum_{x\in E}e^{S_{w,n}\psi(x)}\\
&\geq & e^{-(n+1)\delta}\sum_{x\in E}e^{S_{w,n}\psi(x)}.
\end{eqnarray*}
Therefore $P_{n}(\Phi;w,\psi,\epsilon)\leq e^{(n+1)\delta}Q_{n}(\Phi;w,\psi,\frac{\epsilon}{2})$. Hence, by the definition of $Q_{n}(\Phi;\psi,\frac{\epsilon}{2})$ and $P_{n}(\Phi;\psi,\epsilon)$, we have
$P_{n}(\Phi;\psi,\epsilon)\leq e^{(n+1)\delta}Q_{n}(\Phi;\psi,\frac{\epsilon}{2})$.
\end{proof}
\end{itemize}
\end{remark}
Then, set
\begin{equation*}
 P(\Phi;\psi,\epsilon):=\limsup_{n\to\infty}\frac{1}{n}\log P_{n}(\Phi;\psi,\epsilon).
\end{equation*}
\begin{remark}\label{remark4}
As above, the following statements are true.
\begin{itemize}
\item[(1)]
$Q(\Phi;\psi,\epsilon)\leq P(\Phi;\psi,\epsilon)$, by part $(4)$ of Remark \ref{remark3}.
\item[(2)]
If $\delta=\sup\{|\psi(x)-\psi(y)|:d(x,y)<\frac{\epsilon}{2}\}$, then
$P(\Phi;\psi,\epsilon)\leq \delta+Q(\Phi;\psi,\frac{\epsilon}{2})$, by part $(5)$ of Remark \ref{remark3}.
\item[(3)]
If $\epsilon_{1}<\epsilon_{2}$, then $P(\Phi;\psi,\epsilon_{1})\geq P(\Phi;\psi,\epsilon_{2})$, by part $(1)$ of Remark \ref{remark3}.
\end{itemize}
\end{remark}
\begin{theorem}\label{theorem7}
If $\psi\in C(X,\mathbb{R})$ then $P_{\text{top}}(\Phi,\psi)=\lim_{\epsilon\to 0}P(\Phi;\psi,\epsilon)$.
\end{theorem}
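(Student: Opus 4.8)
The plan is to extract the proof entirely from the estimates already assembled in Remarks \ref{remark2} and \ref{remark4}, with the single additional input being the uniform continuity of $\psi$. First I would record that both one-parameter families $Q(\Phi;\psi,\epsilon)$ and $P(\Phi;\psi,\epsilon)$ are non-increasing in $\epsilon$: this is part (2) of Remark \ref{remark2} for $Q$ and part (3) of Remark \ref{remark4} for $P$. Consequently the limits $\lim_{\epsilon\to 0}Q(\Phi;\psi,\epsilon)$ and $\lim_{\epsilon\to 0}P(\Phi;\psi,\epsilon)$ both exist in $[0,\infty]$ as monotone limits (equivalently, as suprema over $\epsilon>0$); in particular $P_{\text{top}}(\Phi,\psi)$ is well defined, so there is no issue of the limits failing to exist.

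For the inequality $P_{\text{top}}(\Phi,\psi)\leq\lim_{\epsilon\to 0}P(\Phi;\psi,\epsilon)$, I would invoke part (1) of Remark \ref{remark4}, which states $Q(\Phi;\psi,\epsilon)\leq P(\Phi;\psi,\epsilon)$ for every $\epsilon>0$, and then pass to the limit $\epsilon\to 0$ on both sides, using that the left side tends to $P_{\text{top}}(\Phi,\psi)$ by definition.

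The reverse inequality is where the continuity of the potential is used. Setting $\delta(\epsilon):=\sup\{|\psi(x)-\psi(y)|:d(x,y)<\tfrac{\epsilon}{2}\}$, part (2) of Remark \ref{remark4} gives $P(\Phi;\psi,\epsilon)\leq\delta(\epsilon)+Q(\Phi;\psi,\tfrac{\epsilon}{2})$. Since $X$ is a compact metric space, $\psi$ is uniformly continuous, and therefore $\delta(\epsilon)\to 0$ as $\epsilon\to 0$. This is the only genuine input beyond the bookkeeping of the Remarks, and is the step I expect to be the crux of the argument. Letting $\epsilon\to 0$ then yields $\lim_{\epsilon\to 0}P(\Phi;\psi,\epsilon)\leq\lim_{\epsilon\to 0}Q(\Phi;\psi,\tfrac{\epsilon}{2})=P_{\text{top}}(\Phi,\psi)$, where the last equality holds because $\tfrac{\epsilon}{2}\to 0$ as $\epsilon\to 0$ and the limit defining $P_{\text{top}}(\Phi,\psi)$ is independent of how the scale parameter is sent to zero. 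Combining the two inequalities gives the desired equality $P_{\text{top}}(\Phi,\psi)=\lim_{\epsilon\to 0}P(\Phi;\psi,\epsilon)$.
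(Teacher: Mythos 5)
Your proposal is correct and follows essentially the same route as the paper: both inequalities are extracted from parts (1)--(3) of Remark \ref{remark4}, with uniform continuity of $\psi$ on the compact space $X$ forcing $\delta(\epsilon)\to 0$. If anything, you spell out the dependence of $\delta$ on $\epsilon$ more carefully than the paper does, which quantifies over an arbitrary $\delta>0$ and leaves the uniform-continuity step implicit.
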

\begin{proof}
The limit exists by part $(3)$ of Remark \ref{remark4}. By part $(1)$ of Remark \ref{remark4}, we have
$P_{\text{top}}(\Phi,\psi)\leq\lim_{\epsilon\to 0}P(\Phi;\psi,\epsilon)$. Also, by part $(2)$ of Remark \ref{remark4}, for any $\delta>0$, we have
$\lim_{\epsilon\to 0}P(\Phi;\psi,\epsilon)\leq\delta+P_{\text{top}}(\Phi,\psi)$, which implies
$\lim_{\epsilon\to 0}P(\Phi;\psi,\epsilon)\leq P_{\text{top}}(\Phi,\psi)$.
Hence, $P_{\text{top}}(\Phi,\psi)=\lim_{\epsilon\to 0}P(\Phi;\psi,\epsilon)$. The proof is completed.
\end{proof}
\subsection{Definition of topological pressure using open covers}
In this subsection we introduce a special class of continuous potentials and provide a formula via open
covers to compute the topological pressure of an NAIFS respect to this class of continuous potentials.
Let $(X, \Phi)$ be an NAIFS of continuous maps on a compact metric space $(X,d)$. Given $\epsilon>0$
and $w\in I^{m,n}$, we say that an open cover $\mathcal{U}$ of $X$ is a $(w,n,\epsilon)$-\emph{cover} if
any open set $U\in\mathcal{U}$ has $d_{w,n}$-diameter smaller than $\epsilon$, where $d_{w,n}$ is the
Bowen-metric introduced in (\ref{eq18}).
To obtain another characterization of the topological pressure using open covers, we need
continuous potentials satisfying a regularity condition. Given $\epsilon>0$, $w\in I^{m,n}$
and $\psi\in\text{C}(X,\mathbb{R})$ we define the \emph{variation} of $S_{w,n}\psi$ on dynamical
balls of radius $\epsilon$ (see (\ref{eq14})) alongside the word $w$ by
\begin{equation*}
\text{Var}_{w,n}(\psi,\epsilon):=\sup_{d_{w,n}(x,y)<\epsilon}|S_{w,n}\psi(x)-S_{w,n}\psi(y)|.
\end{equation*}

We say that potential $\psi$ has \emph{uniform bounded variation on dynamical balls of radius} $\epsilon$ if
there exists $C>0$ so that
\begin{equation*}
\sup_{n\geq 1, w\in I^{1,n}}\text{Var}_{w,n}(\psi,\epsilon)\leq C.
\end{equation*}
The potential $\psi$ has \emph{the uniformly bounded variation property} whenever there exists $\epsilon>0$
so that $\psi$ has the uniform bounded variation on dynamical balls of radius $\epsilon$.

In the following proposition, we use open covers to provide a formula for computation the topological pressure
of an NAIFS respect to this class of continuous potentials.
\begin{proposition}\label{proposition3}
Let $(X,\Phi)$ be an NAIFS of continuous maps on a compact metric space $(X,d)$ and
$\psi:X\to\mathbb{R}$ be a continuous potential with the uniformly bounded variation property. Then,
\begin{equation*}
 P_{\text{top}}(\Phi,\psi)=\lim_{\epsilon\to 0}\limsup_{n\to\infty}\frac{1}{n}\log \Bigg(\dfrac{1}{\#(I^{1,n})}\sum_{w\in I^{1,n}}\inf_{\mathcal{U}}\sum_{U\in\mathcal{U}}e^{S_{w,n}\psi(U)}\Bigg)
\end{equation*}
where the infimum is taken over all open covers $\mathcal{U}$ of $X$ such that $\mathcal{U}$ is
a $(w,n,\epsilon)$-cover.
\end{proposition}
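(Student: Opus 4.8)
The plan is to introduce the open-cover quantity in parallel with the spanning- and separated-set quantities already defined, and then to trap it between $P(\Phi;\psi,\cdot)$ and $Q(\Phi;\psi,\cdot)$ so that Theorem \ref{theorem7} and the definition of $P_{\text{top}}(\Phi,\psi)$ force the desired equality. For $\epsilon>0$, $n\ge1$ and $w\in I^{1,n}$ I would write
\begin{equation*}
M_{n}(\Phi;w,\psi,\epsilon):=\inf_{\mathcal{U}}\sum_{U\in\mathcal{U}}e^{S_{w,n}\psi(U)},\qquad
M_{n}(\Phi;\psi,\epsilon):=\frac{1}{\#(I^{1,n})}\sum_{w\in I^{1,n}}M_{n}(\Phi;w,\psi,\epsilon),
\end{equation*}
the infimum being over all $(w,n,\epsilon)$-covers, and $M(\Phi;\psi,\epsilon):=\limsup_{n\to\infty}\frac{1}{n}\log M_{n}(\Phi;\psi,\epsilon)$; the right-hand side of the statement is then exactly $\lim_{\epsilon\to0}M(\Phi;\psi,\epsilon)$. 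Compactness of $X$ in the metric $d_{w,n}$ (which is topologically equivalent to $d$, being built from finitely many continuous maps) guarantees that $(w,n,\epsilon)$-covers exist and, together with $e^{S_{w,n}\psi(U)}\le e^{(n+1)\|\psi\|}$, that $M_{n}$ is finite and positive. Since every $(w,n,\epsilon_{1})$-cover is also a $(w,n,\epsilon_{2})$-cover when $\epsilon_{1}<\epsilon_{2}$, the function $\epsilon\mapsto M(\Phi;\psi,\epsilon)$ is non-increasing, so the limit $\lim_{\epsilon\to0}M(\Phi;\psi,\epsilon)$ exists.

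The heart of the argument is the chain
\begin{equation*}
P_{n}(\Phi;w,\psi,\epsilon)\le M_{n}(\Phi;w,\psi,\epsilon)\le e^{C}Q_{n}(\Phi;w,\psi,\tfrac{\epsilon}{2}),
\end{equation*}
valid for every $w\in I^{1,n}$ once $\epsilon$ is smaller than the radius supplied by the uniformly bounded variation property, with $C$ the corresponding bound. For the first inequality I would fix a $(w,n,\epsilon;\Phi)$-separated set $E$ and any $(w,n,\epsilon)$-cover $\mathcal{U}$: each $U\in\mathcal{U}$ has $d_{w,n}$-diameter less than $\epsilon$, hence contains at most one point of $E$, so selecting $U_{x}\ni x$ for $x\in E$ defines an injection $E\to\mathcal{U}$ with $S_{w,n}\psi(x)\le S_{w,n}\psi(U_{x})$; summing gives $\sum_{x\in E}e^{S_{w,n}\psi(x)}\le\sum_{U\in\mathcal{U}}e^{S_{w,n}\psi(U)}$, and taking the supremum over $E$ and the infimum over $\mathcal{U}$ yields the bound. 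This step is purely combinatorial and needs no regularity of $\psi$. For the second inequality I would start from a $(w,n,\tfrac{\epsilon}{2};\Phi)$-spanning set $F$: the open balls $B(x;w,n,\tfrac{\epsilon}{2})$, $x\in F$, cover $X$ with $d_{w,n}$-diameter at most $\epsilon$, so (shrinking the radius slightly if strict inequality is insisted upon) they form a $(w,n,\epsilon)$-cover, and on each of them $S_{w,n}\psi(U)\le S_{w,n}\psi(x)+\text{Var}_{w,n}(\psi,\tfrac{\epsilon}{2})\le S_{w,n}\psi(x)+C$; hence $M_{n}(\Phi;w,\psi,\epsilon)\le e^{C}\sum_{x\in F}e^{S_{w,n}\psi(x)}$ and infimizing over $F$ gives the claim.

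Averaging the chain over $w\in I^{1,n}$ (dividing by $\#(I^{1,n})$ and summing) preserves both inequalities, so that $P_{n}(\Phi;\psi,\epsilon)\le M_{n}(\Phi;\psi,\epsilon)\le e^{C}Q_{n}(\Phi;\psi,\tfrac{\epsilon}{2})$. Applying $\frac{1}{n}\log(\cdot)$ and $\limsup_{n\to\infty}$, the constant factor disappears because $\frac{C}{n}\to0$, and I obtain
\begin{equation*}
P(\Phi;\psi,\epsilon)\le M(\Phi;\psi,\epsilon)\le Q(\Phi;\psi,\tfrac{\epsilon}{2}).
\end{equation*}
Letting $\epsilon\to0$, the left-hand side converges to $P_{\text{top}}(\Phi,\psi)$ by Theorem \ref{theorem7}, while the right-hand side converges to $P_{\text{top}}(\Phi,\psi)$ by the very definition of the topological pressure; since $\lim_{\epsilon\to0}M(\Phi;\psi,\epsilon)$ already exists, it is squeezed to $P_{\text{top}}(\Phi,\psi)$, which is precisely the asserted formula.

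The step I expect to be the main obstacle is the second inequality $M_{n}\le e^{C}Q_{n}(\cdot,\tfrac{\epsilon}{2})$. Passing from $S_{w,n}\psi(U)=\sup_{y\in U}S_{w,n}\psi(y)$ back to the value of $S_{w,n}\psi$ at a single point of $U$ requires controlling the oscillation of the non-autonomous Birkhoff-type sum $S_{w,n}\psi$ over a dynamical ball by a constant that is uniform in both the length $n$ and the branch $w\in I^{1,n}$. This uniformity is exactly what the uniformly bounded variation property furnishes: without it the comparison constant could grow with $n$, the term $\frac{1}{n}\log(\cdot)$ would fail to absorb it, and the open-cover formula would no longer coincide with $P_{\text{top}}(\Phi,\psi)$.
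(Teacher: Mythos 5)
Your proof is correct and follows essentially the same route as the paper: both sandwich the open-cover quantity between the separated-set quantity $P_{n}(\Phi;\psi,\epsilon)$ from below (each element of a $(w,n,\epsilon)$-cover contains at most one point of a $(w,n,\epsilon;\Phi)$-separated set) and a spanning-type quantity at scale $\epsilon/2$ from above, with the uniformly bounded variation property absorbing the sup-versus-point discrepancy $e^{C}$ after dividing by $n$. The only cosmetic difference is that you bound above by $e^{C}Q_{n}(\Phi;\psi,\tfrac{\epsilon}{2})$ via a spanning set, whereas the paper covers $X$ by the dynamical balls around a maximal separated set and lands on $P_{n}(\Phi;\psi,\tfrac{\epsilon}{2})$; since a maximal separated set is a spanning set and Theorem \ref{theorem7} identifies the two limits, the two versions are interchangeable.
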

\begin{proof}
By Theorem \ref{theorem7} we know that
$P_{\text{top}}(\Phi,\psi)=\lim_{\epsilon\to 0}\limsup_{n\to\infty}\frac{1}{n}\log P_{n}(\Phi;\psi,\epsilon)$,
where
\begin{equation*}
 P_{n}(\Phi;\psi,\epsilon)=\dfrac{1}{\#(I^{1,n})}\sum_{w\in I^{1,n}}P_{n}(\Phi;w,\psi,\epsilon)=\dfrac{1}{\#(I^{1,n})}\sum_{w\in I^{1,n}}\sup_{E}\sum_{x\in E}e^{S_{w,n}\psi(x)}
\end{equation*}
and the supremum is taken over all sets $E$ that are $(w,n,\epsilon;\Phi)$-separated. For simplicity,
we denote
\begin{equation*}
C_{n}(\Phi;w,\psi,\epsilon):=\inf_{\mathcal{U}}
\sum_{U\in\mathcal{U}}e^{S_{w,n}\psi(U)}\ \ \text{and}\ \  C_{n}(\Phi;\psi,\epsilon):=\dfrac{1}{\#(I^{1,n})}\sum_{w\in I^{1,n}}C_{n}(\Phi;w,\psi,\epsilon),
\end{equation*}
where the infimum is taken over all open covers $\mathcal{U}$ of $X$ such that $\mathcal{U}$ is
a $(w,n,\epsilon)$-cover.

Take $\epsilon>0$ and $w\in I^{1,n}$. Given a $(w,n,\epsilon;\Phi)$-maximal
separated set $E$, it follows that $\mathcal{U}:=\{B(x;w,n,\epsilon)\}_{x\in E}$ is a
$(w,n,2\epsilon)$-cover. By the uniformly bounded variation property we have
\begin{equation*}
S_{w,n}\psi(B(x;w,n,\epsilon))=\sup_{z\in B(x;w,n,\epsilon)}S_{w,n}\psi(z)\leq S_{w,n}\psi(x)+C
\end{equation*}
for some constant $C>0$, depending only on $\epsilon$. Consequently, we have
\begin{equation}\label{eq20}
\limsup_{n\to\infty}\frac{1}{n}\log C_{n}(\Phi;\psi,2\epsilon)\leq\limsup_{n\to\infty}\frac{1}{n}
\log P_{n}(\Phi;\psi,\epsilon).
\end{equation}

On the other hand, if $\mathcal{U}$ is $(w,n,\epsilon)$-cover of $X$, then for
any $(w,n,\epsilon;\Phi)$-separated set $E$ we have
that $\mathcal{N}(E)\leq\mathcal{N}(\mathcal{U})$, since the diameter of any $U\in\mathcal{U}$ in
the metric $d_{w,n}$ is less than $\epsilon$. By the uniformly bounded variation property, we have
\begin{equation}\label{eq21}
\limsup_{n\to\infty}\frac{1}{n}\log P_{n}(\Phi;\psi,\epsilon)\leq\limsup_{n\to\infty}\frac{1}{n}
\log C_{n}(\Phi;\psi,\epsilon).
\end{equation}
Now, combining equations (\ref{eq20}) and (\ref{eq21}), we get that
\begin{equation*}
\limsup_{n\to\infty}\frac{1}{n}\log P_{n}(\Phi;\psi,\epsilon)
\leq\limsup_{n\to\infty}\frac{1}{n}\log C_{n}(\Phi;\psi,\epsilon)
\leq\limsup_{n\to\infty}\frac{1}{n}\log P_{n}(\Phi;\psi,\frac{\epsilon}{2}),
\end{equation*}
this completes the proof.
\end{proof}
\subsection{The topological pressure of $\ast$-expansive NAIFSs}
In this subsection, we will be mostly interested in providing conditions to compute the topological pressure
of an NAIFS as a limit at a definite size scale. Hence, we begin with the following definition.
\begin{definition}
Let $(X,\Phi)$ be an NAIFS of continuous maps on a compact metric space $(X,d)$.
For $\delta>0$, the NAIFS $(X, \Phi)$ is said to be $\delta$-\emph{expansive} if for
any $\gamma>0$ and any $x,y\in X$ with $d(x,y)\geq\gamma$, there exists
$k_{0}\geq1$ (depending on $\gamma$) such that $d_{w,n}(x,y)>\delta$ for each $w\in I^{m,n}$
with $n\geq k_{0}$. Also, an NAIFS is said to be $\ast$-expansive if it is $\delta$-expansive for
some $\delta>0$.
\end{definition}
In the next section, we illustrate some examples of NAIFSs which fit in our situation and hence they possess the $\ast$-expansive property.

In the rest of this section, we prove that the  topological pressure of an $\ast$-expansive NAIFS can be
computed as the topological complexity that is observable at a definite size scale. More precisely, we
get the next result.
\begin{theorem}\label{theorem8}
Let $(X,\Phi)$ be a $\delta$-expansive NAIFS of continuous maps on a compact metric space $(X,d)$ for
some $\delta>0$. Then, for every continuous potential $\psi:X\to\mathbb{R}$ and every $0<\epsilon<\delta$,
\begin{equation*}
P_{\text{top}}(\Phi,\psi)=\limsup_{n\to\infty}\frac{1}{n}\log P_{n}(\Phi;\psi,\epsilon)=
\limsup_{n\to\infty}\frac{1}{n}\log\Bigg(\dfrac{1}{\#(I^{1,n})}\sum_{w\in I^{1,n}}\sup_{E}\sum_{x\in E}e^{S_{w,n}\psi(x)}\Bigg),
\end{equation*}
where the supremum is taken over all sets $E$ that are $(w,n,\epsilon;\Phi)$-separated.
\end{theorem}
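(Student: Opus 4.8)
The plan is to show that the map $\epsilon\mapsto P(\Phi;\psi,\epsilon)$ is constant on $(0,\delta)$, with common value $P_{\text{top}}(\Phi,\psi)$; the second displayed equality in the statement is then just the definition of $P_{n}(\Phi;\psi,\epsilon)$. By Theorem \ref{theorem7} we have $P_{\text{top}}(\Phi,\psi)=\lim_{\epsilon\to 0}P(\Phi;\psi,\epsilon)$, and by part $(3)$ of Remark \ref{remark4} the quantity $P(\Phi;\psi,\epsilon)$ is non-increasing in $\epsilon$. Hence $P_{\text{top}}(\Phi,\psi)=\sup_{\epsilon>0}P(\Phi;\psi,\epsilon)\geq P(\Phi;\psi,\epsilon)$ for every $\epsilon>0$, so it remains only to prove the reverse bound $P(\Phi;\psi,\eta)\leq P(\Phi;\psi,\epsilon)$ for all $0<\eta<\epsilon<\delta$; letting $\eta\to 0$ then yields the theorem.

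The crux, and what I expect to be the main obstacle, is a separation-inflation step that is exactly where the $\delta$-expansive hypothesis is used and where the uniformity of $k_0$ over all words is essential. Fix $0<\eta<\epsilon<\delta$ and apply $\delta$-expansiveness with $\gamma=\eta$ to obtain $k_{0}=k_{0}(\eta)\geq 1$ so that $d(x,y)\geq\eta$ forces $d_{w,n}(x,y)>\delta$ for every word $w$ with $n\geq k_{0}$. I claim that for every $n\geq 1$ and every $\tilde w\in I^{1,n+k_{0}}$ with initial segment $w=\tilde w|_{n}\in I^{1,n}$, any $(w,n,\eta;\Phi)$-separated set $E$ is automatically $(\tilde w,n+k_{0},\epsilon;\Phi)$-separated. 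Indeed, if $x,y\in E$ are distinct then $d_{w,n}(x,y)>\eta$, so there is some $0\leq j\leq n$ with $d(\varphi_{\tilde w}^{1,j}(x),\varphi_{\tilde w}^{1,j}(y))>\eta$. Applying expansiveness to the shifted word $\tilde w|^{j}$, whose length $(n+k_{0})-j\geq k_{0}$, at the points $\varphi_{\tilde w}^{1,j}(x),\varphi_{\tilde w}^{1,j}(y)$, and using the composition identity $\varphi_{\tilde w|^{j}}^{1+j,i}\circ\varphi_{\tilde w}^{1,j}=\varphi_{\tilde w}^{1,j+i}$, one produces an index $j\leq j+i\leq n+k_{0}$ with $d(\varphi_{\tilde w}^{1,j+i}(x),\varphi_{\tilde w}^{1,j+i}(y))>\delta>\epsilon$, whence $d_{\tilde w,n+k_{0}}(x,y)>\epsilon$, proving the claim.

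Next I would transfer the claim to the potential-weighted sums. Choosing a maximal $(w,n,\eta;\Phi)$-separated set $E$ realizing $P_{n}(\Phi;w,\psi,\eta)$, noting that $S_{\tilde w,n+k_{0}}\psi(x)\geq S_{w,n}\psi(x)-k_{0}\|\psi\|$ (the extra block contributes $k_{0}$ terms, each at least $-\|\psi\|$), and invoking the claim, I obtain for every extension $\tilde w$ of $w$
\begin{equation*}
P_{n+k_{0}}(\Phi;\tilde w,\psi,\epsilon)\geq\sum_{x\in E}e^{S_{\tilde w,n+k_{0}}\psi(x)}\geq e^{-k_{0}\|\psi\|}P_{n}(\Phi;w,\psi,\eta).
\end{equation*}
Writing each $\tilde w\in I^{1,n+k_{0}}$ uniquely as $\tilde w=w\cdot v$ with $w\in I^{1,n}$ and $v\in I^{n+1,k_{0}}$, summing over $v$ (the right-hand side being independent of $v$), then over $w$, and dividing by $\#(I^{1,n+k_{0}})=\#(I^{1,n})\,\#(I^{n+1,k_{0}})$, the factor $\#(I^{n+1,k_{0}})$ cancels and gives
\begin{equation*}
P_{n+k_{0}}(\Phi;\psi,\epsilon)\geq e^{-k_{0}\|\psi\|}\,P_{n}(\Phi;\psi,\eta).
\end{equation*}

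Finally I would take $\tfrac{1}{n}\log(\cdot)$ and pass to $\limsup_{n\to\infty}$. Since $k_{0}$ and $\|\psi\|$ are fixed and the sequence $\tfrac{1}{m}\log P_{m}(\Phi;\psi,\epsilon)$ is bounded (finiteness from Remarks \ref{remark2} and \ref{remark4}, and boundedness below by evaluating a single-point sum), the index shift obeys $\limsup_{n\to\infty}\tfrac{1}{n}\log P_{n+k_{0}}(\Phi;\psi,\epsilon)=P(\Phi;\psi,\epsilon)$ because $\tfrac{n+k_{0}}{n}\to 1$, while the additive term $\tfrac{k_{0}\|\psi\|}{n}\to 0$. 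This yields $P(\Phi;\psi,\epsilon)\geq P(\Phi;\psi,\eta)$, the reverse inequality sought. Combining this with the monotonicity of $P(\Phi;\psi,\cdot)$ and Theorem \ref{theorem7} shows $P(\Phi;\psi,\epsilon)=P_{\text{top}}(\Phi,\psi)$ for every $0<\epsilon<\delta$, completing the proof.
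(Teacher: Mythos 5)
Your proposal is correct and follows essentially the same route as the paper's proof: the key step in both is that a $(w,n,\gamma;\Phi)$-separated set becomes $(\tilde w,n+k,\epsilon;\Phi)$-separated after appending a block of length at least $k_{0}$, by applying $\delta$-expansiveness to the shifted word at the index where separation occurs, followed by the same cancellation of $\#(I^{n+1,k})$ in the averaged sums and the same limsup shift. The only cosmetic difference is that you absorb the extra Birkhoff terms via the factor $e^{-k_{0}\|\psi\|}$, whereas the paper first reduces to a non-negative potential.
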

\begin{proof}
Since $X$ is compact and $\psi:X\to\mathbb{R}$ is continuous, without
loss of generality, we assume that $\psi$ is non-negative. Fix $\gamma$ and $\epsilon$
with $0<\gamma<\epsilon<\delta$.
Then by part $(3)$ of Remark \ref{remark4} it is enough to prove the following inequality
\begin{equation*}
 \limsup_{n\to\infty}\frac{1}{n}\log P_{n}(\Phi;\psi,\gamma)\leq
 \limsup_{n\to\infty}\frac{1}{n}\log P_{n}(\Phi;\psi,\epsilon).
\end{equation*}

By the definition of $\delta$-expansivity, for any two distinct points $x,y\in X$ with $d(x,y)\geq\gamma$,
there exists $k_{0}\geq1$ (depending on $\gamma$) such that $d_{w,n}(x,y)>\delta$ for each
$w\in I^{m,n}$ with $n\geq k_{0}$. Take $w\in I^{1,n+k}$ with $n,k\geq k_{0}$.
Given any $(w|_{n},n,\gamma;\Phi)$-separated set $E$, we claim that the set $E$ is
$(w,n+k,\epsilon;\Phi)$-separated. In fact, given $x,y\in E$ there exists a $0\leq j\leq n$ so that $d(\varphi_{w}^{1,j}(x),\varphi_{w}^{1,j}(y))>\gamma$.
Using that $n+k-j\geq k_{0}$ and the definition of $\delta$-expansivity, it follows
that $d_{w|^{j}, n+k-j}(\varphi_{w}^{1,j}(x),\varphi_{w}^{1,j}(y))>\delta>\epsilon$.
This implies that $d_{w, n+k}(x,y)>\epsilon$. Hence, $E$ is $(w,n+k,\epsilon;\Phi)$-separated,
that prove the claim. Since $\psi$ is non-negative, we have
\begin{equation}\label{eq22}
e^{S_{w,n+k}\psi(x)}=e^{S_{w,n}\psi(x)}e^{S_{w|^{n},k}\psi(\varphi_{w}^{1,n}(x))}\geq
e^{S_{w,n}\psi(x)},
\end{equation}
which implies that $P_{n}(\Phi;\psi,\gamma)\leq P_{n+k}(\Phi;\psi,\epsilon)$ because by
relation (\ref{eq22}) we have
\begin{eqnarray*}
P_{n}(\Phi;\psi,\gamma)
&=&\dfrac{1}{\#(I^{1,n})}\sum_{w\in I^{1,n}}\sup_{E}\sum_{x\in E}e^{S_{w,n}\psi(x)}
=\dfrac{\#(I^{n+1,k})}{\#(I^{1,n+k})}\sum_{w\in I^{1,n}}\sup_{E}\sum_{x\in E}e^{S_{w,n}\psi(x)}\\
&=& \dfrac{1}{\#(I^{1,n+k})}\sum_{w\in I^{1,n+k}}\sup_{E}\sum_{x\in E}e^{S_{w,n}\psi(x)}
\leq\dfrac{1}{\#(I^{1,n+k})}\sum_{w\in I^{1,n+k}}\sup_{E}\sum_{x\in E}e^{S_{w,n+k}\psi(x)}\\
&=& P_{n+k}(\Phi;\psi,\epsilon).
\end{eqnarray*}
Thus,
\begin{equation*}
\limsup_{n\to\infty}\frac{1}{n}\log P_{n}(\Phi;\psi,\gamma)
\leq\limsup_{n\to\infty}\frac{1}{n+k}\log P_{n+k}(\Phi;\psi,\epsilon)\\
\leq\limsup_{n\to\infty}\frac{1}{n}\log P_{n}(\Phi;\psi,\epsilon).
\end{equation*}
This completes the proof.
\end{proof}
\begin{remark}
We observe that in view of the previous characterization given in Proposition \ref{proposition3}, the same
result as Theorem \ref{theorem8} also holds if we consider open covers instead of separated sets. More
precisely, let $(X,\Phi)$ be a $\delta$-expansive NAIFS of continuous maps on a compact metric
space $(X,d)$ for some $\delta>0$.
Then, for every continuous potential $\psi:X\to\mathbb{R}$ with the uniformly bounded variation
property and every $0<\epsilon<\delta$,
\begin{equation*}
P_{\text{top}}(\Phi;\psi)=\limsup_{n\to\infty}\frac{1}{n}\log \Bigg(\dfrac{1}{\#(I^{1,n})}\sum_{w\in I^{1,n}}\inf_{\mathcal{U}}\sum_{U\in\mathcal{U}}e^{S_{w,n}\psi(U)}\Bigg)
\end{equation*}
where the infimum is taken over all open covers $\mathcal{U}$ of $X$ such that $\mathcal{U}$ is
a $(w,n,\epsilon)$-cover.
\end{remark}
\section{Applications}\label{section6}
The main aim of this section is to introduce a special class of NAIFSs having the specifcation and $\ast$-expansive
properties. Rodrigues and Varandas \cite{FRPV}
addressed the specification properties and thermodynamical formalism to deal both
with finitely generated group and semigroup actions. They introduced the notions of specification and orbital specification
properties for the context of group and semigroup actions. Then they
proved that semigroups of expanding maps satisfy the orbital specification properties. We extend this result to uniformly expanding NAIFS.
\begin{definition}
Let $M$ be a compact Riemannian manifold and $f : M \to M$ be a $C^{1}$-local diffeomorphism. We say that $f$ is \emph{expanding} if there exist $\sigma > 1$ and some Riemannian metric on $M$ such that $\|Df(x)v \| \geq \sigma \|v \|$, for every $x \in M$ and every vector $v$ tangent to $M$ at the point $x$.
\end{definition}
We recall the next statement from \cite{VO}. Let $f : M \to M$ be a expanding $C^{1}$-local diffeomorphism on a compact Riemannian manifold $M$. Then, there exist constants $\sigma > 1$ and $\rho > 0$ such that for every $p \in M$ the image of the ball $B(p, \rho)$ contains a neighborhood of the closure of $B(f(p), \rho)$ and $d(f(x), f(y)) \geq \sigma d(x, y)$, for every $x,y\in B(p,\rho)$. Moreover, for any pre-image $x$ of any point $y \in M$, there exists a map $h : B(y, \rho)\to M$ of
class $C^1$ such that $f \circ h = id$, $h(y) = x$ and
\begin{equation}\label{eq9}
 d(h(y_1),h(y_2))\leq \sigma^{-1}d(y_1,y_2)\ \text{for every}\ y_1,y_2 \in B(y,\rho).
\end{equation}
The factors $\sigma$ and $\rho$ will be called the \emph{expansion factor} and \emph{injectivity constant} of the expanding $C^{1}$-local diffeomorphism $f$, respectively. Also the map $h$ is called \emph{inverse branch} of the
$C^{1}$-local diffeomorphism $f$. Inequality (\ref{eq9}) implies that the inverse branches are contractions, with uniform contraction rate $\sigma^{-1}$.

Now, we introduce a class of NAIFSs that will be studied in the present section. Let
$M$ be a compact Riemannian manifold. For any $\sigma>1$ and $\rho>0$, we denote
by $\mathcal{E}(\sigma,\rho)$ the set of all expanding $C^{1}$-local diffeomorphisms on $M$ with expanding factor $\sigma$ and injectivity constant $\rho$.
\begin{definition}\label{def1}
We say that an NAIFS $(M, \Phi)$ is \emph{uniformly expanding} if there
exist $\sigma>1$ and $\rho>0$ such that $\varphi_{i}^{(j)}\in\mathcal{E}(\sigma,\rho)$ for
each $j\in\mathbb{N}$ and $i\in I^{(j)}$. The factors $\sigma$ and $\rho$ will be
called the \emph{uniform expansion factor} and \emph{injectivity constant} of the NAIFS $(M, \Phi)$,
respectively.
\end{definition}
In what follows, we consider a uniformly expanding NAIFS $(M, \Phi)$ with uniform expansion factor $\sigma >1$ and injectivity constant $\rho >0$. By definition, for each $j \in \mathbb{N}$ and $i\in I^{(j)}$, the restriction of $\varphi_{i}^{(j)}$ to each ball $B(x, \rho)$ of radius $\rho$ is injective and its image contains the closure
of $B(\varphi_{i}^{(j)}(x), \rho)$. Thus, the restriction $\varphi_{i}^{(j)}$ to
$B(x,\rho) \cap (\varphi_{i}^{(j)})^{-1}(B(\varphi_{i}^{(j)}(x), \rho))$ is a diffeomorphism onto $B(\varphi_{i}^{(j)}(x), \rho)$.
We denote the inverse branch of $\varphi_{i}^{(j)}$ at $x$ by
\begin{equation*}
 h_{i,x}^{(j)}:B(\varphi_{i}^{(j)}(x), \rho)\to B(x,\rho).
\end{equation*}
It is clear that $h_{i,x}^{(j)}(\varphi_{i}^{(j)}(x)) = x$ and
$\varphi_{i}^{(j)} \circ h_{i,x}^{(j)} =id$. Definition \ref{def1} implies that $h_{i,x}^{(j)}$ is $\sigma^{-1}$-contraction:
\begin{equation}\label{eq13}
d(h_{i,x}^{(j)}(z),h_{i,x}^{(j)}(w))\leq\sigma^{-1}d(z,w)\ \ \text{for every}\ \ z,w\in B(\varphi_i^{(j)}(x), \rho).
\end{equation}

More generally, for finite word $w=w_{m}w_{m+1}\ldots w_{m+n-1}\in I^{m,n}$ with $m,n\geq 1$,
we call the inverse branch of $\varphi_{w}^{m,n}$ at $x$ the composition
\begin{equation*}
 h_{w,x}^{m,n}:= h_{w_{m},x}^{(m)}\circ h_{w_{m+1},\varphi_{w}^{m,1}(x)}^{(m+1)}\circ\cdots\circ h_{w_{m+n-1},\varphi_{w}^{m,n-1}(x)}^{(m+n-1)}: B(\varphi_{w}^{m,n}(x), \rho) \to B(x,\rho).
 \end{equation*}
Observe that $h_{w,x}^{m,n} (\varphi_{w}^{m,n}(x)) = x$ and $\varphi_{w}^{m,n} \circ h_{w,x}^{m,n}= id$. Moreover, for each $0 \leq j \leq n$ we have
\begin{center}
$\varphi_{w}^{m,j} \circ h_{w,x}^{m,n} = h_{w,\varphi_{w}^{m,j}(x)}^{m+j,n-j}\ \ \text{and} \ \ h_{w,\varphi_{w}^{m,j}(x)}^{m+j,n-j}: B(\varphi_{w}^{m,n}(x), \rho) \to B(\varphi_{w}^{m,j}(x),\rho)$
\end{center}
where $h_{w,\varphi_{w}^{m,j}(x)}^{m+j,n-j}:=h_{w_{m+j},\varphi_{w}^{m,j}(x)}^{(m+j)}\circ\cdots\circ h_{w_{m+n-1},\varphi_{w}^{m,n-1}(x)}^{(m+n-1)}$. Hence,
\begin{equation}\label{eq12}
 d(\varphi_{w}^{m,j} \circ h_{w,x}^{m,n} (z), \varphi_{w}^{m,j} \circ h_{w,x}^{m,n} (w)) \leq \sigma^{j-n}d(z,w)
\end{equation}
for every $z,w \in B(\varphi_{w}^{m,n}(x), \rho)$ and every $0 \leq j \leq n$.

In the rest of this section, we show that uniformly expanding NAIFSs satisfy the specifcation and $\ast$-expansive properties. To do this we need the following auxiliary two lemmas.
\begin{lemma}\label{lemma5}
Let $(M, \Phi)$ be a uniformly expanding NAIFS with the uniform expansion factor $\sigma >1$ and injectivity constant $\rho >0$. Then for every $x \in M$, $w\in I^{m,n}$ and $0< \epsilon \leq \rho$ we have
$\varphi_{w}^{m,n}(B(x;w,n,\epsilon))=B(\varphi_{w}^{m,n}(x),\epsilon)$, where $B(x;w,n,\epsilon)$ is the dynamical $(n+1)$-ball with radius $\epsilon$ corresponding to the finite word $w$ around $x$ given by (\ref{eq14}).
\end{lemma}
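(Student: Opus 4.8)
The plan is to prove the set equality by establishing the two inclusions separately, with the forward inclusion being immediate and the reverse inclusion being the substantive part that exploits the inverse branches $h_{w,x}^{m,n}$ and the contraction estimate (\ref{eq12}).

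For the inclusion $\varphi_{w}^{m,n}(B(x;w,n,\epsilon))\subseteq B(\varphi_{w}^{m,n}(x),\epsilon)$, I would simply observe that if $y\in B(x;w,n,\epsilon)$, then by the definition of $d_{w,n}$ in (\ref{eq18}) we have $d(\varphi_{w}^{m,j}(x),\varphi_{w}^{m,j}(y))<\epsilon$ for every $0\leq j\leq n$; taking $j=n$ yields $\varphi_{w}^{m,n}(y)\in B(\varphi_{w}^{m,n}(x),\epsilon)$. This step uses no expansion at all.

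The reverse inclusion is where the expanding structure enters. Given $z\in B(\varphi_{w}^{m,n}(x),\epsilon)$, since $\epsilon\leq\rho$ the point $z$ lies in the domain $B(\varphi_{w}^{m,n}(x),\rho)$ of the inverse branch, so I would set $y:=h_{w,x}^{m,n}(z)\in B(x,\rho)$. Because $\varphi_{w}^{m,n}\circ h_{w,x}^{m,n}=id$, we get $\varphi_{w}^{m,n}(y)=z$, so it remains only to verify that $y\in B(x;w,n,\epsilon)$. For each $0\leq j\leq n$ I would use the identity $\varphi_{w}^{m,j}\circ h_{w,x}^{m,n}=h_{w,\varphi_{w}^{m,j}(x)}^{m+j,n-j}$ recorded above, together with $h_{w,x}^{m,n}(\varphi_{w}^{m,n}(x))=x$, to write both $\varphi_{w}^{m,j}(x)$ and $\varphi_{w}^{m,j}(y)$ as the images of $\varphi_{w}^{m,n}(x)$ and $z$ under one and the same map $\varphi_{w}^{m,j}\circ h_{w,x}^{m,n}$. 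The contraction estimate (\ref{eq12}) then gives
$$d(\varphi_{w}^{m,j}(x),\varphi_{w}^{m,j}(y))\leq\sigma^{j-n}\,d(\varphi_{w}^{m,n}(x),z)<\sigma^{j-n}\epsilon\leq\epsilon,$$
where the last inequality uses $\sigma>1$ together with $j\leq n$. Since this holds for all $0\leq j\leq n$, we obtain $d_{w,n}(x,y)<\epsilon$, that is, $y\in B(x;w,n,\epsilon)$, whence $z=\varphi_{w}^{m,n}(y)\in\varphi_{w}^{m,n}(B(x;w,n,\epsilon))$.

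The argument is short and the only point demanding care — rather than a genuine obstacle — is the index bookkeeping: one must check that $z$ stays inside the injectivity radius so that the inverse branch is defined, and match the indices in the composition identity $\varphi_{w}^{m,j}\circ h_{w,x}^{m,n}=h_{w,\varphi_{w}^{m,j}(x)}^{m+j,n-j}$ so that (\ref{eq12}) applies with the factor $\sigma^{j-n}\leq 1$. No hypothesis beyond $0<\epsilon\leq\rho$ is needed, and the estimate holds uniformly in $x$, $w$, $m$ and $n$ precisely because $\sigma$ and $\rho$ are common to all generators of a uniformly expanding NAIFS.
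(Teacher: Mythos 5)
Your argument is correct and follows essentially the same route as the paper's own proof: the forward inclusion from the definition of the Bowen metric, and the reverse inclusion by pulling a point of $B(\varphi_{w}^{m,n}(x),\epsilon)$ back through the inverse branch $h_{w,x}^{m,n}$ and applying the contraction estimate (\ref{eq12}) for each $0\leq j\leq n$. The only difference is cosmetic --- you spell out the composition identity $\varphi_{w}^{m,j}\circ h_{w,x}^{m,n}=h_{w,\varphi_{w}^{m,j}(x)}^{m+j,n-j}$ that the paper uses implicitly.
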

\begin{proof}
Let $w\in I^{m,n}$ and $B(x;w,n,\epsilon)$ be the dynamical $(n+1)$-ball with radius $\epsilon$ corresponding to the finite word $w$ around $x$. The inclusion $\varphi_{w}^{m,n}(B(x;w,n,\epsilon))\subseteq B(\varphi_{w}^{m,n}(x),\epsilon)$ is an immediate consequence of the definition of a dynamical ball.
To prove the converse, consider the inverse branch
$h_{w,x}^{m,n}: B(\varphi_{w}^{m,n}(x), \rho) \to B(x,\rho)$ of $\varphi_{w}^{m,n}$ at $x$.
Given any $y \in B(\varphi_{w}^{m,n}(x),\epsilon)$, let $z = h_{w,x}^{m,n}(y)$.
Then $\varphi_{w}^{m,n}(z) = y$. By inequality (\ref{eq12}), for $0 \leq j \leq n$, we have
$$d(\varphi_{w}^{m,j}(z), \varphi_{w}^{m,j}(x)) \leq \sigma^{j-n}d(\varphi_{w}^{m,n}(z), \varphi_{w}^{m,n}(x)) \leq d(y, \varphi_{w}^{m,n}(x)) < \epsilon.$$
Hence, $z = h_{w,x}^{m,n}(y)\in B(x;w, n, \epsilon)$ that implies $\varphi_{w}^{m,n}(B(x;w,n,\epsilon))\supseteq B(\varphi_{w}^{m,n}(x),\epsilon)$. This finishes the proof of the lemma.
\end{proof}
The following lemma of the topologically exact property is now folklore and we omit its proof, see \cite[Lemma 18]{FRPV}.
\begin{lemma}\label{lemma4}
Let $(M, \Phi)$ be a uniformly expanding NAIFS on a compact connected Riemannian manifold $M$. Then for any $\delta>0$ there is $N=N(\delta)\in\mathbb{N}$ so that $\varphi_{w}^{m,n}(B(x,\delta))=M$ for every $x\in M$ and $w\in I^{m,n}$ with $n\geq N$.
\end{lemma}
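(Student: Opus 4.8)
The plan is to reduce the statement to a single estimate on the expansion of metric balls under one generator, to bootstrap that estimate to all radii, and then to compose along the word $w$. First I would record the elementary consequence of the inverse–branch contraction (\ref{eq13}): for any generator $\varphi_i^{(j)}$, any $p\in M$ and any $0<r\le\rho$ one has $\varphi_i^{(j)}(B(p,r))\supseteq B(\varphi_i^{(j)}(p),\min\{\sigma r,\rho\})$. Indeed, given $q\in B(\varphi_i^{(j)}(p),\min\{\sigma r,\rho\})$ the inverse branch $h_{i,p}^{(j)}(q)$ is defined, since $q\in B(\varphi_i^{(j)}(p),\rho)$, and as $h_{i,p}^{(j)}(\varphi_i^{(j)}(p))=p$ inequality (\ref{eq13}) gives $d(h_{i,p}^{(j)}(q),p)\le\sigma^{-1}d(q,\varphi_i^{(j)}(p))<r$; hence $q=\varphi_i^{(j)}(h_{i,p}^{(j)}(q))\in\varphi_i^{(j)}(B(p,r))$.

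Next I would upgrade this capped estimate to the uncapped form $\varphi_i^{(j)}(B(p,s))\supseteq B(\varphi_i^{(j)}(p),\sigma s)$ for every $s>0$, with the convention that a ball of radius larger than $\diam M$ is all of $M$. For $s\le\rho/\sigma$ this is already the previous estimate, since there the minimum equals $\sigma s$. For larger $s$ I would cover $B(p,s)$ by the balls $B(p',\rho/\sigma)$ with $p'\in B(p,s-\rho/\sigma)$; each such ball lies in $B(p,s)$ and maps onto a set containing $B(\varphi_i^{(j)}(p'),\rho)$, so $\varphi_i^{(j)}(B(p,s))$ contains the $\rho$–neighbourhood of $\varphi_i^{(j)}(B(p,s-\rho/\sigma))$. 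Because $M$ is a compact connected Riemannian manifold its metric is a geodesic metric, so the $\rho$–neighbourhood of a ball $B(c,a)$ equals $B(c,a+\rho)$; feeding this into an induction on the least integer $k$ with $s\le k\rho/\sigma$ yields $\varphi_i^{(j)}(B(p,s))\supseteq B(\varphi_i^{(j)}(p),\sigma s)$ for all $s$. Connectedness and compactness of $M$ enter here in two ways: to guarantee $\diam M<\infty$ and to identify neighbourhoods of balls with larger balls.

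Finally I would compose along the word. If $w\in I^{m,n}$, applying the uncapped estimate to each generator in turn, together with monotonicity of the image under inclusion, gives inductively $\varphi_{w}^{m,k}(B(x,\delta))\supseteq B(\varphi_{w}^{m,k}(x),\sigma^{k}\delta)$ for $0\le k\le n$ and every $x\in M$. Choosing $N(\delta):=\lceil \log(\diam M/\delta)/\log\sigma\rceil$ (and $N(\delta)=1$ when $\delta>\diam M$), for every $n\ge N(\delta)$ we have $\sigma^{n}\delta>\diam M$, so $B(\varphi_{w}^{m,n}(x),\sigma^{n}\delta)=M$ and therefore $\varphi_{w}^{m,n}(B(x,\delta))=M$. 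Since $\sigma$ and $\rho$ are the same for all generators of the NAIFS, $N(\delta)$ depends only on $\delta$, through $\sigma$ and $\diam M$, and not on $m$, $n$, $x$ or $w$, which is precisely the uniformity asserted.

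The main obstacle I anticipate is the middle step: the inverse–branch argument only controls the image of a ball of radius at most the injectivity constant and caps the gain at $\rho$, so a naive iteration stalls as soon as the covered ball reaches radius $\rho$. Overcoming this requires the covering-plus-connectedness bootstrap above, and one must check that all constants remain uniform over the (possibly infinite) family of generators — which is exactly what the common uniform expansion factor and injectivity constant of Definition \ref{def1} supply.
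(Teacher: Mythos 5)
Your proof is correct. Note that the paper itself gives no argument for this lemma: it declares the statement folklore and refers to \cite[Lemma 18]{FRPV}, so there is no internal proof to compare against. Your write-up supplies essentially the standard argument behind that citation, and you correctly identify and resolve the one genuine subtlety: the inverse-branch estimate (\ref{eq13}) only yields $\varphi_i^{(j)}(B(p,r))\supseteq B(\varphi_i^{(j)}(p),\min\{\sigma r,\rho\})$, so naive iteration stalls at radius $\rho$, and one must use that $M$ is a geodesic space (compact and connected) to convert ``the $\rho$-neighbourhood of $\varphi_i^{(j)}(B(p,s-\rho/\sigma))$'' into the larger ball $B(\varphi_i^{(j)}(p),\sigma s)$ and keep growing. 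All three steps (the capped estimate, the bootstrap to arbitrary radii, the composition along $w$ with uniformity of $\sigma$ and $\rho$ over all generators) are sound. Two cosmetic remarks: in the bootstrap you do not actually need the small balls to \emph{cover} $B(p,s)$, only that each $B(p',\rho/\sigma)$ with $p'\in B(p,s-\rho/\sigma)$ is contained in $B(p,s)$, which is what you use; and with $N(\delta)=\lceil \log(\diam M/\delta)/\log\sigma\rceil$ you may get $\sigma^{N}\delta=\diam M$ exactly (an open ball of radius equal to the diameter need not be all of $M$), so take $N(\delta)+1$, or note that the closure statement from \cite{VO} absorbs the boundary case.
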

Note that Lemma \ref{lemma4} also implies that each expanding $C^{1}$-local diffeomorphism
on a compact connected Riemannian manifold $M$ is surjective.
\begin{theorem}\label{theorem6}
Let $(M, \Phi)$ be a uniformly expanding NAIFS on a compact connected Riemannian manifold $M$ with the uniform expansion factor $\sigma >1$ and injectivity constant $\rho >0$. Then the NAIFS $(M, \Phi)$ satisfies the specification property.
\end{theorem}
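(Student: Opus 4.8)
The plan is to take $N(\delta)$ to be the topological exactness constant produced by Lemma \ref{lemma4} (applied at radius $\min\{\delta,\rho\}$) and to build the shadowing point $x$ by a backward pull-back construction that glues the prescribed blocks across the gaps. Since shadowing at a smaller scale is a fortiori shadowing at scale $\delta$, I may assume without loss of generality that $\delta\le\rho$, so that Lemma \ref{lemma5} applies with $\epsilon=\delta$. The whole argument will use only the two forward-image statements (Lemmas \ref{lemma5} and \ref{lemma4}); no inverse branches are needed explicitly.

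Fix an infinite word $w\in I^{1,\infty}$, points $x_1,\dots,x_s$, and an admissible time sequence $0=j_1\le k_1<\dots<j_s\le k_s$ with all gaps $g_m:=j_{m+1}-k_m\ge N(\delta)$. For each block $m$ put $p_m:=\varphi_w^{1,j_m}(x_m)$ and
$$B_m:=B\big(p_m;\,w|^{j_m},\,k_m-j_m,\,\delta\big),$$
the dynamical ball around the state of the $x_m$-orbit at time $j_m$; by unwinding the Bowen-metric condition, the requirement $\varphi_w^{1,j_m}(x)\in B_m$ is \emph{exactly} the statement that $x$ $\delta$-shadows $x_m$ throughout $[j_m,k_m]$. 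Denote the block transition $F_m:=\varphi_{w|^{j_m}}^{j_m+1,\,k_m-j_m}$ (time $j_m\to k_m$) and the gap transition $G_m:=\varphi_{w|^{k_m}}^{k_m+1,\,g_m}$ (time $k_m\to j_{m+1}$). The two governing facts are: by Lemma \ref{lemma5}, $F_m(B_m)=B(\varphi_w^{1,k_m}(x_m),\delta)$ is a genuine $\delta$-ball; and by Lemma \ref{lemma4}, since $g_m\ge N(\delta)$, the gap map $G_m$ sends every $\delta$-ball onto all of $M$. Hence $G_m\!\circ\! F_m(B_m)=M$.

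Next I would set up the pull-back sets $C_s:=B_s$ and, for $m=s-1,\dots,1$, $C_m:=\{z\in B_m:\,G_m(F_m(z))\in C_{m+1}\}$, and prove by backward induction that each $C_m$ is non-empty: $C_s=B_s\ne\emptyset$, and if $C_{m+1}\ne\emptyset$ then, since $G_m(F_m(B_m))=M\supseteq C_{m+1}$, some $z\in B_m$ maps into $C_{m+1}$, giving $C_m\ne\emptyset$. Choosing any $x\in C_1$ and reading off $z_m:=\varphi_w^{1,j_m}(x)$, the defining relations of the $C_m$ force $z_m\in B_m$ for every $m$ (one checks $z_{m+1}=G_m(F_m(z_m))$, so membership propagates forward). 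Unwinding $d_{w|^{j_m},\,k_m-j_m}(z_m,p_m)<\delta$ then yields $d(\varphi_w^{1,i}(x),\varphi_w^{1,i}(x_m))<\delta$ for all $j_m\le i\le k_m$, which is the specification conclusion.

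The only delicate point is conceptual rather than computational: the specification constant must be independent of the branch $w$. This is guaranteed precisely because the exactness constant $N(\delta)$ furnished by Lemma \ref{lemma4} is uniform over all words and all base points, so the same $N(\delta)$ works simultaneously for every $w\in I^{1,\infty}$. The remaining verifications — that each $\varphi_w^{1,i}$ is a globally defined continuous map so the compositions $G_m\circ F_m$ make literal sense, and the reduction from general $\delta$ to the case $\delta\le\rho$ — are routine.
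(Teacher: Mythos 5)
Your proposal is correct and follows essentially the same route as the paper's proof: reduce to $\delta\le\rho$, use Lemma \ref{lemma5} to turn each dynamical ball into a round $\delta$-ball under the block map, use Lemma \ref{lemma4} to see that each gap map (of length at least $N(\delta)$, which is uniform in $w$) sends that ball onto all of $M$, and then pull back from the last block to the first to produce the shadowing point. Your pull-back sets $C_m$ are just a slightly more formal packaging of the paper's backward choice of the points $\bar{x}_s,\bar{x}_{s-1},\dots,\bar{x}_1$.
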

\begin{proof}
The proof of the theorem can be followed from the previous two lemmas. Fix $\delta>0$,
without loss of generality we assume that $\delta<\rho$. Let $w=w_{1}w_{2}\ldots\in I^{1,\infty}$ and $N=N(\delta)$ be given by Lemma \ref{lemma4}. Suppose that points $x_{1},x_{2},\ldots,x_{s}\in M$ with
$s\geq 2$ and sequence $0=j_{1}\leq k_{1}<j_{2}\leq k_{2}<\cdots<j_{s}\leq k_{s}$ of integers with
$j_{n+1}-k_{n}\geq N$ for $n=1,\ldots,s-1$ are given. By Lemma \ref{lemma5} we have
\begin{equation}\label{eq15}
\varphi_{w}^{j_{i}+1,k_{i}-j_{i}}(B(\varphi_{w}^{1,j_{i}}(x_{i});w|^{j_{i}},k_{i}-j_{i},\delta))=B(\varphi_{w}^{j_{i}+1,k_{i}-j_{i}}(\varphi_{w}^{1,j_{i}}(x_{i})),\delta)\ \text{for}\ 1\leq i\leq s.
\end{equation}
Also by Lemma \ref{lemma4} we get
\begin{equation}\label{eq16}
\varphi_{w}^{k_{i}+1,j_{i+1}-k_{i}}(B(\varphi_{w}^{j_{i}+1,k_{i}-j_{i}}(\varphi_{w}^{1,j_{i}}(x_{i})),\delta))=M\ \text{for}\ i=1,\ldots,s-1.
\end{equation}

Equations (\ref{eq15}) and (\ref{eq16}) imply that
for given $\bar{x}_{s}\in B(\varphi_{w}^{1,j_{s}}(x_{s}),w|^{j_{s}},k_{s}-j_{s},\delta)$ we have
$\bar{x}_{s}=\varphi_{w}^{k_{s-1}+1,j_{s}-k_{s-1}}(\tilde{x}_{s-1})$ with
$\tilde{x}_{s-1}\in B(\varphi_{w}^{j_{s-1}+1,k_{s-1}-j_{s-1}}(\varphi_{w}^{1,j_{s-1}}(x_{s-1})),\delta)$,
hence
$\bar{x}_{s}=\varphi_{w}^{k_{s-1}+1,j_{s}-k_{s-1}}\circ\varphi_{w}^{j_{s-1}+1,k_{s-1}-j_{s-1}}(\bar{x}_{s-1})$, for some
\begin{center}
$\bar{x}_{s-1}\in B(\varphi_{w}^{1,j_{s-1}}(x_{s-1});w|^{j_{s-1}},k_{s-1}-j_{s-1},\delta)$.
\end{center}
By repeating this argument, there exists $\bar{x}_{1}\in B(x_{1};w,k_{1},\delta)$, such that for $i=2,\ldots,s$ we have
\begin{equation}\label{eq17}
\bar{x}_{i}=\varphi_{w}^{k_{i-1}+1,j_{i}-k_{i-1}}\circ\varphi_{w}^{j_{i-1}+1,k_{i-1}-j_{i-1}}\circ\cdots\circ\varphi_{w}^{k_{1}+1,j_{2}-k_{1}}\circ\varphi_{w}^{1,k_{1}}(\bar{x}_{1}).
\end{equation}
Now, by equation (\ref{eq17}), $x=\bar{x}_{1}$ satisfies the definition of specification property and finishes the proof of the theorem.
\end{proof}
The next result shows that any uniformly expanding NAIFS satisfies the $\ast$-expansive property.
\begin{proposition}\label{pro6}
Let $(M, \Phi)$ be a uniformly expanding NAIFS with the uniform expansion factor $\sigma >1$ and injectivity constant
$\rho >0$. Then the NAIFS $(M, \Phi)$ is $\ast$-expansive.
\end{proposition}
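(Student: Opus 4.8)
The plan is to exhibit an explicit $\delta>0$ for which the system is $\delta$-expansive, the natural choice being any $\delta$ with $0<\delta<\rho$; for concreteness take $\delta=\rho/2$. The whole argument rests on the local expansion property recalled from \cite{VO} before Definition \ref{def1}: since each $\varphi_{i}^{(j)}$ lies in $\mathcal{E}(\sigma,\rho)$, whenever two points $u,v$ satisfy $d(u,v)<\rho$ they both lie in the ball $B(u,\rho)$ (indeed $d(u,u)=0<\rho$ and $d(u,v)<\rho$), and therefore $d(\varphi_{i}^{(j)}(u),\varphi_{i}^{(j)}(v))\geq\sigma\,d(u,v)$. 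By Definition \ref{def1} this estimate is uniform over all $j\geq 1$ and $i\in I^{(j)}$, so the bounds below will be independent of the word $w$ and of its starting index $m$.

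Next I would fix $\gamma>0$ and points $x,y\in M$ with $d(x,y)\geq\gamma$, and abbreviate $x_{j}:=\varphi_{w}^{m,j}(x)$ and $y_{j}:=\varphi_{w}^{m,j}(y)$ for a word $w\in I^{m,n}$. If $\gamma>\delta$ there is nothing to prove, since $d_{w,n}(x,y)\geq d(x_{0},y_{0})=d(x,y)\geq\gamma>\delta$ already at the index $j=0$, so one may take $k_{0}=1$. Hence assume $\gamma\leq\delta$ and set $k_{0}=\ell$, where $\ell$ is the least positive integer with $\sigma^{\ell}\gamma>\delta$. Note that $\ell$ depends only on $\gamma$, $\sigma$ and $\delta$, and not on the points $x,y$ or on the word $w$, exactly as the definition of $\delta$-expansivity requires.

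The core step is a short iteration argument run by contradiction. Fix $w\in I^{m,n}$ with $n\geq k_{0}=\ell$ and suppose, contrary to the claim, that $d(x_{j},y_{j})\leq\delta$ for every $0\leq j\leq\ell$. Since $\delta<\rho$, the local expansion applies at each step: for $0\leq j\leq\ell-1$ the map carrying $x_{j},y_{j}$ to $x_{j+1},y_{j+1}$ is $\varphi_{w_{m+j}}^{(m+j)}\in\mathcal{E}(\sigma,\rho)$, and $d(x_{j},y_{j})\leq\delta<\rho$ yields $d(x_{j+1},y_{j+1})\geq\sigma\,d(x_{j},y_{j})$. Iterating from $j=0$ gives $d(x_{\ell},y_{\ell})\geq\sigma^{\ell}d(x_{0},y_{0})\geq\sigma^{\ell}\gamma>\delta$, contradicting the standing assumption $d(x_{\ell},y_{\ell})\leq\delta$. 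Therefore some index $0\leq j\leq\ell\leq n$ satisfies $d(x_{j},y_{j})>\delta$, whence $d_{w,n}(x,y)=\max_{0\leq j\leq n}d(x_{j},y_{j})>\delta$. As this holds for every $w\in I^{m,n}$ with $n\geq k_{0}$, the NAIFS $(M,\Phi)$ is $\delta$-expansive, and hence $\ast$-expansive.

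The step I expect to require the most care is the justification that the local expansion inequality is legitimately applicable at each of the $\ell$ iterations: this is precisely the role of the contradiction hypothesis $d(x_{j},y_{j})\leq\delta<\rho$, which keeps consecutive orbit points inside a common ball of radius $\rho$ so that the expansion estimate provided by membership in $\mathcal{E}(\sigma,\rho)$ is valid. Everything else is bookkeeping, and it is the uniformity of $\sigma$ and $\rho$ across all the constituent maps that makes both $\delta$ and $k_{0}$ independent of $w$, $m$, $x$ and $y$; note in particular that, unlike Theorem \ref{theorem6}, this argument does not use connectedness of $M$.
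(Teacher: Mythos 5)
Your proof is correct, but it runs in the opposite direction from the paper's. The paper takes $\delta=\rho$ and argues backwards: assuming $d_{w,n}(x,y)\leq\rho$, the point $y$ lies in the dynamical ball around $x$, hence in the image of the inverse branch $h_{w,x}^{m,n}$, and the uniform $\sigma^{-1}$-contraction of inverse branches (inequality (\ref{eq12})) gives $d(x,y)\leq\sigma^{-n}d_{w,n}(x,y)\leq\sigma^{-k_0}\rho<\gamma$, a contradiction once $k_0$ is chosen with $\sigma^{-k_0}\rho<\gamma$. You instead take $\delta=\rho/2$ and iterate the forward estimate $d(f(u),f(v))\geq\sigma\,d(u,v)$ valid for $u,v$ in a common $\rho$-ball, pushing the separation forward until it exceeds $\delta$ after $\ell$ steps with $\sigma^{\ell}\gamma>\delta$. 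The two arguments are dual and both yield a $k_0$ depending only on $\gamma$ (and the uniform constants $\sigma,\rho$), as the definition requires. What your route buys is self-containedness: it needs only the local expansion estimate recalled from \cite{VO} and avoids invoking the inverse-branch machinery and (implicitly) Lemma \ref{lemma5} to justify that $y=h_{w,x}^{m,n}(\varphi_{w}^{m,n}(y))$; moreover your safety margin $\delta=\rho/2<\rho$ keeps every application of the expansion estimate strictly inside open $\rho$-balls, sidestepping the boundary case $d_{w,n}(x,y)=\rho$ that the paper's choice $\delta=\rho$ technically leaves to the reader. The paper's route is shorter on the page because inequality (\ref{eq12}) was already established for the proof of Lemma \ref{lemma5}. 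Your closing remark that connectedness of $M$ is not needed is also consistent with the paper, which assumes connectedness only for the specification property (Theorem \ref{theorem6}, via Lemma \ref{lemma4}) and not here.
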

\begin{proof}
By assumption, all inverse branches of $\varphi_{i}^{(j)}$, for each $j\in\mathbb{N}$ and $i\in I^{(j)}$, are defined on balls of radius $\rho$ and
they are $\sigma^{-1}$-contraction. Take $\delta=\rho$. For given $\gamma>0$, take $k_{0}\geq 1$ (depending on $\gamma$) so that $\sigma^{-k_{0}}\delta<\gamma$. We claim that  for any $x,y\in M$ with $d(x,y)\geq\gamma$ and $w\in I^{m,n}$ with $m\geq 1$ and $n\geq k_{0}$ we have $d_{w,n}(x,y)>\delta$.
Assume, by contradiction, that there exists $w\in I^{m,n}$ with $m\geq 1$ and $n\geq k_{0}$ such that $d_{w,n}(x,y)\leq\delta$. Then, by inequality (\ref{eq12}), we have $d_{w,j}(x,y)\leq \sigma^{j-n}d_{w,n}(x,y)$ for every $0\leq j\leq n$ and so $d(x,y)\leq \sigma^{-n}d_{w,n}(x,y)<\sigma^{-n}\delta\leq\sigma^{-k_{0}}\delta<\gamma$, which is a contradiction. Hence, the NAIFS $(M, \Phi)$ is $\delta$-expansive which completes the proof.
\end{proof}
Now, we illustrate some examples of NAIFSs which fit in our situation.
\begin{example}
Let $\varphi_{A}:\mathbb{T}^{d}\to\mathbb{T}^{d}$ be the linear endomorphism of the torus $\mathbb{T}^{d}=\mathbb{R}^{d}/\mathbb{Z}^{d}$ induced by
some matrix $A$ with integer coefficients and determinant different from zero. Assume that all the
eigenvalues $\lambda_{1},\lambda_{2},\ldots,\lambda_{d}$ of $A$ are larger than $1$ in
absolute value. Then, given any $1<\sigma<\inf_{i}|\lambda_{i}|$, there exists an inner product
in $\mathbb{R}^{d}$ relative to which $||Av||\geq\sigma ||v||$ for every $v\in\mathbb{R}^{d}$. This shows that the transformation $\varphi_{A}$ is expanding, see \cite[Example 11.1.1]{VO}.

Now, let $\mathcal{A}$ be a non-empty finite set of different matrices enjoying the above conditions. Then, each NAIFS $(\mathbb{T}^{d},\Phi)$ consists of the sequence $\{\Phi^{(j)}\}_{j\geq 1}$ of collections $\Phi^{(j)}\subseteq\{\varphi_{A}:A\in\mathcal{A}\}$ is uniformly expanding and by Theorem \ref{theorem6}
and Proposition \ref{pro6} satisfies the specification and $\ast$-expansive properties.
\end{example}
\begin{example}
Let $A$ be a non-empty finite set of positive integers $k>1$ and $S^{1}=\mathbb{R}/\mathbb{Z}$. Consider the set $\mathcal{A}=\{f_{k}:S^{1}\to S^{1}: f_{k}(x)=kx\ \text{(mod 1)},\ k\in A\}$. Then, each NAIFS $(S^{1},\Phi)$ consists of the sequence $\{\Phi^{(j)}\}_{j\geq 1}$ of collections $\Phi^{(j)}\subseteq\mathcal{A}$ is uniformly expanding and by Theorem \ref{theorem6} and Proposition \ref{pro6} satisfies the specification and $\ast$-expansive properties.
\end{example}
\begin{example}
For positive constant $0<\alpha<1$ the Pomeau-Manneville map $\varphi_\alpha:[0,1]\to[0,1]$ given by
\begin{equation*}\label{h}
\varphi_\alpha(x)= \left\{
 \begin{array}{rl}
  x+2^\alpha x^{1+\alpha} & 0 \leq x \leq 1/2\\
  2x-1 & 1/2 < x \leq 1.
 \end{array}\right.
\end{equation*}
Note that, since each Pomeau-Manneville map is semiconjugated to the full shift on two symbols, it satisfies
the specification property (as an autonomous dynamical system), see \cite[Example 3.4]{TBOPV}.
Here, we give an NAIFS $(S^1,\Phi)$ that consists of circle Pomeau-Manneville
maps having the specification property.

Indeed, let us take $0 <\beta<1$ and the family of real numbers
$$\{\alpha_i^{(j)}: 0 <\beta< \alpha_i^{(j)}<1\}_{i \in I^{(j)}}, \ j \in \mathbb{N},$$
where $I^{(j)}$ is a non-empty finite index set for all $j \geq 1$.
Assume $\varphi_i^{(j)}=\varphi_{\alpha_i^{(j)}}$ for all $j\geq 1$ and $i\in I^{(j)}$. We identify the
unit interval $[0, 1]$ with the circle $S^1$, so that the maps become continuous.
Take the NAIFS $(S^1,\Phi)$ consists of the sequence $\{\Phi^{(j)}\}_{j\geq 1}$ of collections $\Phi^{(j)}=\{\varphi_{\alpha_i^{(j)}}\}_{i\in I^{(j)}}$ of Pomeau-Manneville circle maps.
We claim that the NAIFS $(S^1,\Phi)$ satisfies the specification property.
First, we observe that for every $x\in S^{1}$, $\epsilon>0$ and $w\in I^{m,n}$ with $m,n\geq 1$ the dynamical
ball $B(x;w,n,\epsilon)$ satisfes $\varphi_{w}^{m,n}(B(x;w,n,\epsilon))=B(\varphi_{w}^{m,n}(x),\epsilon)$.
Second, although each Pomeau-Manneville map $\varphi_{\alpha_i^{(j)}}$ is not uniformly expanding, it enjoys the following scaling property:
given $\delta > 0$, $\text{diam}(\varphi_{\alpha_i^{(j)}}[0,\delta])\geq \frac{\delta}{2}+\frac{\delta}{2}[1+(1+\beta)\delta^\beta]=c_\delta \text{diam}[0,\delta]$
and $\text{diam}(\varphi_{\alpha_i^{(j)}}(I))\geq \sigma_\delta \text{diam}(I)$ for every ball $I \subset S^1$
of diameter larger or equal to $\delta$, where $\sigma_\delta >1$ (depending on $\delta$) and $c_\delta:=(1+\delta(1+\beta)\delta^\beta)>1$, see \cite{FRPV}. Note that by the choice of the collections $\Phi^{(j)}$ as
above, their derivatives satisfy
$d\varphi_{\alpha_i^{(j)}}(x)\geq (1+(1+\beta)2^\beta x^\beta)\geq (1+(1+\beta)\delta^\beta)$ for every $x \in [\frac{\delta}{2},\frac{1}{2}]$ and $d\varphi_{\alpha_i^{(j)}}(x)=2$
for every $x \in (\frac{1}{2},1]$.
Using the previous expression recursively, we deduce that there exists $N_\delta>0$ such that for each $w \in I^{m,n}$ with $n \geq N_\delta$ one has that $\varphi_w^{m.n}(B(x,\delta))=S^1$, for each $x \in S^1$. This means that the NAIFS $(S^1,\Phi)$ is topologically exact. Thus, we can apply the approach used
in the proof of Theorem \ref{theorem6} to conclude the NAIFS $(S^1,\Phi)$ has the specification property.
\end{example}

In what follows, we give some comments about the specification property of NAIFSs and semigroup (group) actions.

Given a continuous map $g$ on a topological space $X$, we say that $g$ has \emph{finite order} if there exists $n\geq 1$ so that $g^n=id_{X}$.
Let us mention that, in the context of group actions, the existence of elements of generators of finite order is not
an obstruction for the group action to have the specification property in the sense of \cite[Defnition 1]{FRPV} that extends the specifcation property introduced by Ruelle \cite{DR} to more general group actions and differs
from the orbital specification properties which introduced by Rodrigues and Varandas \cite{FRPV} (e.g., the $\mathbb{Z}^2$-action on $\mathbb{T}^2=\mathbb{R}^2 / \mathbb{Z}^2$ whose generators are a hyperbolic automorphism and the reflection on the real axis satisfies the specification property in the sense of
\cite[Defnition 1]{FRPV}, see \cite{FRPV}). However, in the context of NAIFSs, if there
exists $g\in\cap_{j\geq 1}\Phi^{(j)}$ of finite order, then this can not be true, see the following example.
\begin{example}\label{ex22}
Let $(X,\Phi)$ be an NAIFS of continuous maps on a compact metric space $(X,d)$, and let $g:X\to X$ be a continuous map of finite order $n$ such that $g\in\cap_{j\geq 1}\Phi^{(j)}$. We claim that the NAIFS $(X,\Phi)$ does not enjoy the specification property. Assume, by contradiction, that the NAIFS $(X,\Phi)$ satisfies the specification property. Let $\delta>0$ be small and fixed so that there are at least two distinct $2\delta$-separated points $x_{1},x_{2}\in X$, i.e. $d(x_{1},x_{2})>2\delta$. Let $N(\frac{\delta}{2})\geq 1$ be given by the definition of specification property. Then, for the word $w \in I^{1,\infty}$ corresponding to the constant sequence $(g,g,g,\ldots)$ and integers $0=j_{1}=k_{1}<j_{2}=k_{2}$ with $j_{2}-k_{1}=rn\geq N(\frac{\delta}{2})$, for some $r\in\mathbb{N}$, there is a point $x\in X$ such that  $d(x,x_{1})\leq\frac{\delta}{2}$ and
$d(\varphi_{w}^{1,rn}(x),\varphi_{w}^{1,rn}(x_{2}))\leq\frac{\delta}{2}$. Consequently,
\begin{equation*}
\delta<d(x,x_{2})=d(g^{rn}(x),g^{rn}(x_{2}))=d(\varphi_{w}^{1,rn}(x),\varphi_{w}^{1,rn}(x_{2}))\leq\frac{\delta}{2},
\end{equation*}
that is a contradiction.
\end{example}
Note that in \cite{FRPV} the authors introduced three kinds of specification properties for group and semigroup actions: specification property in the sense of Ruelle, strong orbital specification property and weak orbital specification property. For a semigroup action, the claim in Example \ref{ex22} holds whenever we consider the 
strong orbital specification property.


We mention that, an NAIFS generalizes the both concepts of finitely generated semigroup actions and
non-autonomous discrete dynamical systems. The next example shows that the dynamic of an NAIFS differs from semigroup actions.
\begin{example}
Let $f:S^{1}\to S^{1}$ be a $C^{1}$-expanding map of the circle, and let $R_{\alpha}:S^{1}\to S^{1}$ be the rotation of angle $\alpha$. Then, the semigroup $G$ generated by $G_{1}=\{f,R_{\alpha}\}$ does not satisfy the strong orbital specification property, see \cite[Example 31]{FRPV}.

Now, let $(S^{1},\Phi)$ be a uniformly expanding NAIFS with the uniform expansion factor $\sigma >1$ and injectivity constant $\rho >0$. Then, by Theorem \ref{theorem6}, the NAIFS $(S^{1}, \Phi)$ satisfies the specification property. Take $\Psi^{(1)}=\Phi^{(1)}\cup\{R_{\alpha}\}$ and $\Psi^{(j)}=\Phi^{(j)}$ for
all $j\geq 2$. We claim that the NAIFS $(S^{1},\Psi)$ enjoys the specification property. Indeed, let $\delta>0$ be
fixed, without loss of generality we assume that $\delta<\rho$, and take $N(\delta)$ the constant given
by Lemma \ref{lemma4} for the NAIFS $(S^{1},\Phi)$. 
For the NAIFS $(S^{1},\Psi)$, take a word $w=w_{1}w_{2}\ldots\in I^{1,\infty}$, points $x_{1},x_{2},\ldots,x_{s}\in S^{1}$ with $s\geq 2$
and a sequence $0=j_{1}\leq k_{1}<j_{2}\leq k_{2}<\cdots<j_{s}\leq k_{s}$ of integers
with $j_{n+1}-k_{n}\geq N_{\delta}$ for $n=1,\ldots,s-1$, where $N_{\delta}=N(\delta)+1$. By Theorem \ref{theorem6}, if $\psi_{w_{1}}^{(1)}\neq R_{\alpha}$, then there is a point $x\in X$ such that $d(\psi_{w}^{1,i}(x),\psi_{w}^{1,i}(x_{m}))\leq\delta$ for each $1\leq m\leq s$ and
any $j_{m}\leq i\leq k_{m}$. If $\psi_{w_{1}}^{(1)}=R_{\alpha}$, then there is a dynamical $(k_{1}+1)$-ball $B(x_{1};w,k_{1},\epsilon)$ with $\epsilon\leq\delta$ such that $\psi_{w}^{1,k_{1}}(B(x_{1};w,k_{1},\epsilon))=B(\psi_{w}^{1,k_{1}}(x_{1}),\delta)$ (note that, $R_{\alpha}$ is an isometry and $(S^{1},\Phi)$ is a uniformly expanding NAIFS). Now, by the approach used in Theorem \ref{theorem6}, there is a point $x\in S^{1}$ such that
$d(\psi_{w}^{1,i}(x),\psi_{w}^{1,i}(x_{m}))\leq\delta$ for each $1\leq m\leq s$ and
any $j_{m}\leq i\leq k_{m}$. This proves the claim.
\end{example}
\section*{Acknowledgements}
The authors would like to thank the respectful referee for his/her comments on the manuscript.


\begin{thebibliography}{99}
\bibitem{AKM} R. Adler, A. Konheim, and J. McAndrew, {\it Topological entropy},
Trans. Amer. Math. Soc. {\bf 114} (1965), no. 2, 309--319.


\bibitem{LAJLMM} L. Alsed$\grave{\text{a}}$, J. Llibre, and M. Misiurewicz, {\it Combinatorial dynamics and
entropy in dimension one}, World Scientific, Singapore, 1993.


\bibitem{AB1} A. Bi\'{s}, {\it Entropies of a semigroup of maps}, Discrete Contin. Dyn. Syst. {\bf 11} (2004), no. 2-3, 639--648.


\bibitem{AB} A. Bi\'{s}, {\it An analogue of the variational principle for group and pseudogroup actions},
Ann. Inst. Fourier (Grenoble) {\bf 63} (2013), no. 3, 839--863.


\bibitem{ABMU} A. Bi\'{s} and M. Urba\'{n}ski, {\it Some remarks on topological entropy of a semigroup of continuous maps}, Cubo {\bf 2} (2006), no. 2, 63--71.


\bibitem{LBWC} L. S. Block and W. A. Coppel, {\it Dynamics in One Dimension}, Lecture Notes in Mathematics, Springer Verlag, Berlin, 1992.


\bibitem{TBOPV} T. Bomfim and P. Varandas, {\it The gluing orbit property, uniform hyperbolicity and
large deviations principles for semiflows}, J. Differential Equations (2019), https://doi.org/10.1016/j.jde.2019.01.010.


\bibitem{RB2} R. Bowen, {\it Topological entropy and axiom A}, Global analysis, Proc. Symp. Pure Math.
{\bf 14} (1970), 23--41.


\bibitem{RB} R. Bowen, {\it Entropy for group endomorphisms and homogeneous spaces}, Trans. Amer. Math. Soc. {\bf 153} (1971), 401--414.


\bibitem{RB1} R. Bowen, {\it Topological entropy for noncompact sets}, Trans. Amer. Math. Soc.
{\bf 184} (1973), 125--136.


\bibitem{RB3} R. Bowen, {\it Equilibrium states and the ergodic theory of Anosov diffeomorphisms},
Lecture Notes in Mathematics, Springer Verlag, Berlin, 1975.


\bibitem{RBDR} R. Bowen and D. Ruelle, {\it The ergodic theory of Axiom A flows}, Invent. Math.
\textbf{29} (1975), no. 3, 181--202.


\bibitem{DBBB} D. Burago, {\it Semi-dispersing billiards of infinite topological entropy},
Ergodic Theory Dynam. Systems {\bf 26} (2006), no. 1, 45--52.


\bibitem{Di} E. I. Dinaburg, {\it On the relations among various entropy characteristics of dynamical systems},
Math. USSR Izv. {\bf 5} (1971), no. 2, 337--378.


\bibitem{TG} T. Downarowciz, {\it Positive topological entropy implies chaos DC2},
Proc. Amer. Math. Soc. {\bf 142} (2014), no. 1, 137--149.


\bibitem{EE} E. Eberlein, {\it On topological entropy of semigroups of commuting transformations},
Publications math\'{e}matiques et informatique de Rennes {\bf 40} (1976), no. S4. 1--46.


\bibitem{LWG} L. W. Goodwyn, {\it Topological entropy bounds and measure-theoretic entropy},
 Proc. Amer. Math. Soc.  {\bf 23} (1969), no. 3, 679--688.


\bibitem{BHZNJP} B. Hasselblatt, Z. Nitecki, and J. Propp, {\it Topological entropy for non-uniformly continuous maps}, Discrete Contin. Dyn. Syst. {\bf 22} (2008), no. 1-2, 201--213.


\bibitem{AMORS} A. M. Henderson, E. J. Olsony, J. C. Robinsonz, and N. Sharplesx, {\it Equi-homogeneity,
Assouad Dimension and Non-autonomous Dynamics}, https://arxiv.org/abs/1409.4659.


\bibitem{HWZ} X. Huang, X. Wen, and F. Zeng, {\it Topological pressure of non-autonomous dynamical systems}, Nonlinear Dyn. Syst. Theory {\bf 8} (2008), no. 1, 43--48.


\bibitem{WHYY} W. Huang and Y. Yi, {\it A local variational principle for pressure and its applications to
equilibrium states}, Israel J. Math. {\bf 161} (2007), no. 1, 29--74.


\bibitem{Kl1} P. E. Kloeden, {\it Synchronization of nonautonomous dynamical systems},
Electron. J. Differ. Equ. {\bf 2003} (2003), no. 39, 1--10.


\bibitem{Kl} P. E. Kloeden, {\it Nonautonomous attractors of switching systems},
\emph{Dyn. Sys.} {\bf 21} (2006), no. 2, 209--230.


\bibitem{SKLS} S. Kolyada and L. Snoha, {\it Topological entropy of non-autonomous dynamical
systems}, Random Comput. Dyn. {\bf 4} (1996), no. 2-3, 205--233.


\bibitem{DMMW} D. Ma and M. Wu, {\it Topological pressure and topological entropy of a semigroup of maps}, Discrete Contin. Dyn. Syst. {\bf 31} (2011), no. 2, 545--556.


\bibitem{JNFG} J. Nazarian Sarkooh and F. H. Ghane, {\it Specification and thermodynamic properties
of non-autonomous dynamical systems}, https://arxiv.org/abs/1712.06109.

\bibitem{ORS} E. J. Olson, J. C. Robinson, and N. Sharples, {\it Generalised Cantor sets and
the dimension of products}, Math. Proc. Cambridge Philos. Soc. {\bf 160} (2016), no. 1, 51--75.


\bibitem{YP} Y. B. Pesin, {\it Dimension type characteristics for invariant sets of dynamical systems},
Russian Math. Surveys {\bf 43} (1988), no. 4, 111--151.


\bibitem{YP1} Y. B. Pesin, {\it Dimension theory in dynamical systems}, University of Chicago Press, Chicago, 1997.


\bibitem{YPBP} Y. B. Pesin and B. Pitskel, {\it Topological pressure and the variational principle for
non-compact sets}, Funct. Anal. Appl. {\bf 18} (1984), no. 4, 307--318.


\bibitem{RHS} M. Rasmussen, A. Hastings, M. J. Smith, F. B. Agusto, B. M. Chen-Charpentier,
F. M. Hoffman, J. Jiang, K. E. O. Todd-Brown, Y. Wang, Y. P. Wang, and Y. Luo,
{\it Transit times and mean ages for nonautonomous and autonomous compartmental systems},
J. Math. Biol. {\bf73} (2016), no. 6-7, 1379--1398.


\bibitem{LGMU} L. Rempe-Gillen and M. Urba\'{n}ski, {\it Non-autonomous conformal iterated function
systems and Moran-set constructions},  Trans. Amer. Math. Soc. {\bf 368} (2016), no. 3, 1979--2017.

\bibitem{RS} J. C. Robinson and N. Sharples, {\it Strict inequality in the box-counting
dimension product formulas}, Real Anal. Exchange {\bf 38} (2013), no. 1, 95--120.


\bibitem{FRPV} F. B. Rodrigues and P. Varandas, {\it Specification and thermodynamical properties of
semigroup actionss}, J. Math. Phys. {\bf 57} (2016), no. 5, 052704.


\bibitem{DR} D. Ruelle, {\it Statistical mechanics on a compact set with $\mathbb{Z}^{\nu}$ action satisfying expansiveness and specification}, Trans. Amer. Math. Soc. {\bf 187} (1973), 237--251.


\bibitem{DR1} D. Ruelle, {\it A measure associated with Axiom A attractors}, Amer. J. Math.
{\bf 98} (1976), no. 3, 619--654.

\bibitem{SSZ} H. Shao, Y. Shi, and H. Zhu, {\it Estimations of topological entropy for non-autonomous
discrete systems}, J. Difference Equ. Appl. {\bf 22} (2016), no. 3, 474--484.


\bibitem{YGS} Y. G. Sinai, {\it Gibbs measures in ergodic theory}, Russian Math. Surveys
{\bf 27} (1972), no. 4, 21--69.

\bibitem{JTBLWC} J. Tang, B. Li, and W-C. Cheng, {\it Some properties on topological entropy of free
semigroup action}, Dyn. Syst. {\bf 33} (2018), no. 1, 54--71.


\bibitem{DJT1} D. J. Thompson, {\it A thermodynamic definition of topological pressure for non-compact sets}, Ergodic Theory Dynam. Systems {\bf 31} (2011), no. 2, 527--547.


\bibitem{DJT} D. J. Thompson, {\it Irregular sets, the $\beta$-transformation and the almost specifcation
property}, Trans. Amer. Math. Soc. {\bf 364} (2012), no. 10, 5395--5414.


\bibitem{VSY1} P. Varandas, N. Sumi, and K. Yamamoto, {\it Partial hyperbolicity and specification},
Proc. Amer. Math. Soc.  {\bf 144} (2016), no. 3, 1161--1170.


\bibitem{VO} M. Viana and K. Oliveira, {\it Foundations of Ergodic Theory}, Cambridge University Press, 2016.


\bibitem{PW} P. Walters, {\it An introduction to ergodic theory}, Springer, Berlin, 1982.


\bibitem{KY} K. Yamamoto, {\it On the weaker forms of the specification property and their applications}, Proc. Amer. Math. Soc. {\bf 137} (2009), no. 11, 3807--3814.


\bibitem{FZKYGH} F. Zeng, K. Yan, and G. Zhang, {\it Pre-image pressure and invariant measures},
 Ergodic Theory Dynam. Systems {\bf 27} (2007), no. 3, 1037--1052.
\end{thebibliography}
\end{document}